\documentclass[12pt,reqno,oneside]{amsart}

\usepackage{amsmath,amsthm,amssymb}
\usepackage{graphicx,xcolor}

\usepackage{etoolbox}
\numberwithin{figure}{section}
\numberwithin{equation}{section}
 
\DeclareFontFamily{U}{mathb}{\hyphenchar\font45}
\DeclareFontShape{U}{mathb}{m}{n}{
	<-6> mathb5 <6-7> mathb6 <7-8> mathb7
	<8-9> mathb8 <9-10> mathb9
	<10-12> mathb10 <12-> mathb12
}{}
\DeclareSymbolFont{mathb}{U}{mathb}{m}{n}
\DeclareMathSymbol{\llcurly}{\mathrel}{mathb}{"CE}
\DeclareMathSymbol{\ggcurly}{\mathrel}{mathb}{"CF}

\definecolor{Red}{cmyk}{0,1,1,0}

\definecolor{Blue}{cmyk}{1,1,0,0}


\theoremstyle{plain}
\newtheorem{maintheorem}{Theorem} 

\newtheorem{maincorollary}[maintheorem]{Corollary}
\newtheorem{theorem}{Theorem }[section]
\newtheorem{proposition}[theorem]{Proposition}
\newtheorem{lemma}[theorem]{Lemma}
\newtheorem{corollary}[theorem]{Corollary}

\theoremstyle{definition} \theoremstyle{remark}
\newtheorem{remark}[theorem]{Remark}
\newtheorem{example}[theorem]{Example}
\newtheorem{definition}[theorem]{Definition}

\newcommand{\diam}{\operatorname{diam}}

\renewcommand{\ge}{\geqslant}
\renewcommand{\geq}{\geqslant}
\renewcommand{\leq}{\leqslant}
\renewcommand{\le}{\leqslant}

\usepackage{mathrsfs}
\usepackage[colorlinks=true,linkcolor=blue, urlcolor=red, citecolor=blue]{hyperref}

\setlength{\topmargin}{0cm} \setlength{\headsep}{1cm}
\setlength{\textwidth}{16cm} \setlength{\textheight}{20cm}
\setlength{\headheight}{-0.5cm} \setlength{\oddsidemargin}{-0.5cm}
\setlength{\evensidemargin}{-0.5cm} \setlength{\footskip}{0cm}

\subjclass[2020]{Primary 28A80, 37C70, 37B10 ; Secondary: 54H20, 37C45}

\keywords{Local iterated function systems, local attractors, symbolic dynamics, code space, 
non self-similar fractals, 
fractal geometry.}


\begin{document}
\pagestyle{myheadings}

\title[Foundations of local iterated function systems]{Foundations of local iterated function systems}

\author{Elismar R. Oliveira and Paulo Varandas}

\address{Elismar R. Oliveira,  Departamento de Matem\'atica, Universidade Federal do Rio grande do Sul\\
Porto Alegre, Brazil}
\email{elismar.oliveira@ufrgs.br}

\address{Paulo Varandas,  Center for Research and Development in Mathematics and Applications (CIDMA), 
Department of Mathematics, University of Aveiro, 3810-193 Aveiro, Portugal
\& Departamento de Matem\'atica, Universidade Federal da Bahia\\
Av. Ademar de Barros s/n, 40170-110 Salvador, Brazil}
\email{paulo.varandas@ua.pt}

\begin{abstract}
In this paper we present a systematic study of continuous local iterated function systems. We prove local iterated function systems admit compact attractors and, under a contractivity assumption, construct their code space and present an extended shift that describes admissible compositions. In particular, the possible combinatorial structure of a local iterated function system is in bijection with the space of invariant subsets of the full shift. Nevertheless, these objects reveal a degree of unexpectedness relative to the classical framework, as we build examples of local iterated function systems which are not modeled by subshifts of finite type and give rise to non self-similar attractors. 
We also prove that all attractors of graph-directed IFSs are obtained from local IFSs on an enriched compact metric space.
  
We provide several classes of examples illustrating the scope of our results, emphasizing both their contrasts and connections with the classical theory of iterated function systems.
  
\end{abstract}

\date{\today}
\maketitle

\tableofcontents

\section{Introduction}
\medskip

Iterated Function Systems (IFSs) exhibit rich topological properties that are central to their mathematical and practical significance. Defined by a collection of continuous self maps $(f_i)_{1\le i\le n}$ on a compact metric space $X$, it defines a self-similar set, typically the unique non-empty compact set $\Lambda$ that remains invariant under the action of 
the {Hutchinson-Barnsley operator} $F:  2^{X} \to 2^{X} $, defined by
$$
F(B)=\bigcup_{1\le j \le n} f_{j}(B),
\quad \text{for each $B \in 2^X$.}
$$
In case the maps $f_i$ are contractive   
it is known that the Hutchinson-Barnsley operator is a contraction, and that all sets $B\subset X$ are attracted to $\Lambda$, 
and the attractor of the IFS possesses a fractal structure. 
Notably, in case the iterated function system 
the contracting effect under iteration by the maps $f_j$ ensures that each point in $\Lambda$ is in correspondence  with an itinerary $\underline b=(b_n)_{n\le -1}\in \{1,2,\dots, n\}^{-\mathbb N}$, refered to as the code space. This is in fact a bijection whenever the IFS satisfies the open set condition, namely
\begin{equation}
    \tag{OSC}
f_i(X)\cap f_j(X)=\emptyset \quad \text{for every distinct } 1\le i,j\le n.
\end{equation}
    Overall, 
IFSs possess rich dynamical and geometric properties that have been thoroughly investigated in recent decades, leading to many significant applications both in the pure and applied sciences.
Building on Barnsley’s foundations, contemporary research extends the theory of IFSs towards probabilistic and multifractal frameworks, measure rigidity, and symbolic dynamics. 
 
In particular, the interplay with thermodynamic formalism - through concepts such as topological pressure and Gibbs measures - links geometric scaling with entropy and multifractal spectra, offering a bridge between dynamical systems and geometric measure theory. In this way, Barnsley’s operator-theoretic approach to construction of fractals remains a cornerstone in the modern understanding of fractal geo\-metry, as it furnishes a formal framework through which the principles of self-similarity, iteration, and invariance are unified under a single mathematical structure.
Our main objective is to extend Barnsley’s perspective by offering a unified theoretical framework for local iterated function systems, which we proceed to recall.

\smallskip
 
   Local Iterated Function Systems (local IFSs), written as $R_{\mathfrak X}=(X_j,f_j)_{1\le j\le n}$ for some collection of subsets $\mathfrak X =(X_j)_{1\le j \le n}$ of $X$ and to be defined precisely in Definition~\ref{def:local IFS}, were introduced by Barnsley \cite{Bar86} motivated by applications to fractal image compression and  computational mathe\-matics.
      The general picture for local IFSs  differs substantially from the classical context of IFSs. It often occurs that certain compositions of maps $f_i\circ f_j$ are not possible, making place dependent the space of admissible compositions. 
   Despite the effort and several applications and contributions in the subsequent years, an axiomatic description of the local IFSs seems far from complete. In fact, Fisher \cite{Fis95} emphasized that a complete theory for partitioning iterated function systems - nowadays known as local iterated function systems - was yet to be developed and pinpointed that in general one could not obtain the chaos game nor the contraction of the Hutchinson-Barnsley operator. This lead to the development of the theory of local iterated function systems satisfying  additional very restrictive conditions
   (cf. Remark~\ref{rmk:restrictivec} for the precise statements).
Such an approach effectively acts as a bypass to the fundamental obstacles pointed out by Barnsley, Fisher, and their collaborators, notably on the lack of a general fixed-point principle and the breakdown of global contractivity of the Hutchinson-Barnsley operator. Consequently, the foundations of a general theory remained largely unsettled.

   \smallskip
In this work, we establish the basic framework for a general theory of local iterated function systems, initiating the analysis of their topological, combinatorial, and geometric structure in the setting of a continuous local
 IFS  $R_{\mathfrak X}=(X_j,f_j)_{1\le j \le n}$ on a compact metric space $X$
 and closed subsets $X_j\subset X$.   
At first glance, one could imagine  that the topological properties of local IFSs closely resemble those of classical IFSs, with more similarities than differences. However this turns out not to be the case. A major drawback with respect to the classical theory of IFSs is that the local Hutchinson-Barnsley operator may not be a contraction. 
  Yet, it is not hard to check that the iterates of the local Hutchinson-Barnsley operator $F_{\mathfrak X}$ converge to a compact and $F_{\mathfrak X}$-invariant set $A_{\mathfrak X}\subseteq X$ in the Hausdorff-Pompeiu distance
   (cf. Theorem~\ref{thm:topological}).
    However, such local Hutchinson-Barnsley operators $F_{\mathfrak X}$
   have major differences in comparison to the classical operators. In fact, such operators may fail to be contractions whenever the maps $f_j$, and this is evidenced by the fact that there may exist subsets $B\subsetneq X$ that are not attracted to $A_{\mathfrak X}$, leading us to introduce several notions of topological basin of attractor for the local attractor of a local IFS (see Theorem~\ref{thm:topological}). 
In Theorem~\ref{thm:classification} we establish that every locally contractive IFS is semiconjugate to a locally contractive IFS on a shift space. This connection opens the way to constructing attractors of local IFSs both through geometric examples and symbolic–combinatorial techniques, enriching the understanding of their structure.

A second striking difference between the classical theory of IFSs with the one of local IFSs concerns the existence of local attractors with components that do not belong to the union of the domains of the maps that define the local IFS, leading to the notion of endpoints. More precisely, there exist contractive local IFSs 
   $R_{\mathfrak X}=(X_j,f_j)_{1\le j \le n}$, where $X_i\subset X$ are compact restrictions of the domains of the maps $f_i$ of contractive IFSs
   $R=(X,f_j)_{1\le j \le n}$ 
   whose attractors contain points that do not belong to the set
   $\bigcup_{1\le j \le n} X_j$
   (in particular such points have finite orbits and 
   the chaos game necessarily fails, see Example~\ref{ex:Elismar}). 
This motivates the definition of two possibly distinct classes of attractors
\begin{equation}\label{eq:apresentacaoA}
A^\infty_{\mathfrak X} \subseteq A_{\mathfrak X} \subseteq A_R,    
\end{equation}
where $A_{\mathfrak X}$ will be denoted as the local attractor of the local IFS and $A^\infty_{\mathfrak X}$ will stand for the part of the local attractor which has no endpoints, 
with the following properties:

{\small 
\begin{center}
\begin{tabular}{|c|c|c|c|}
\hline
 & {\bf  Code space} & {\bf  Orbits} & {\bf  Transitive} \\
\hline
$A^\infty_{\mathfrak X}$   &  { Compact shift invariant subset} & All points have infinite orbits &  No \\
\hline
$A_{\mathfrak X}$ & Compact shift invariant subset & There may exist points with finite orbits & No  \\
\hline
$A_R$ & Full shift & All points have infinite orbits & Yes \\
\hline
\end{tabular}    
\end{center}
}
\medskip
In fact, the local attractor $A_{\mathfrak X}$ of a local contrating IFS presents the following characteristics in comparison with the classical theory of IFS:
\begin{itemize}
  
    \item[$\circ$] there exist positively invariant subsets that are not attracted to $A_{\mathfrak X}$   (cf. Example~\ref{ex:differnt-basins})
    
\item[$\circ$] $A_{\mathfrak X}$ may be non-transitive and not satisfy the chaos game  (cf. Example~\ref{ex:Elismar})
\item[$\circ$] $A_{\mathfrak X}$ may have finite orbits of different length, hence endpoints (cf. Example~\ref{ex:Elismar}); 
   \item[$\circ$] $A_{\mathfrak X}$ may contain points in the complement of $\bigcup_{i} X_i$  (cf. Example~\ref{ex:shift}) 
    
    \item[$\circ$] the compact space of $A^\infty_{\mathfrak X}$ may be accumulated by $A_{\mathfrak X} \setminus A^\infty_{\mathfrak X}$ (cf. Example~\ref{ex:shift});
    \item[$\circ$] the code space of $A_{\mathfrak X}$ may not be a subshift of finite type (cf. Example~\ref{ex:nonSFT});
    \item[$\circ$] $A_{\mathfrak X}$ may be non self-similar (cf. Example~\ref{ex:MapleSierpinski}).
\end{itemize}

\begin{figure}[htb]
    \centering
  \includegraphics[width=0.4\linewidth]{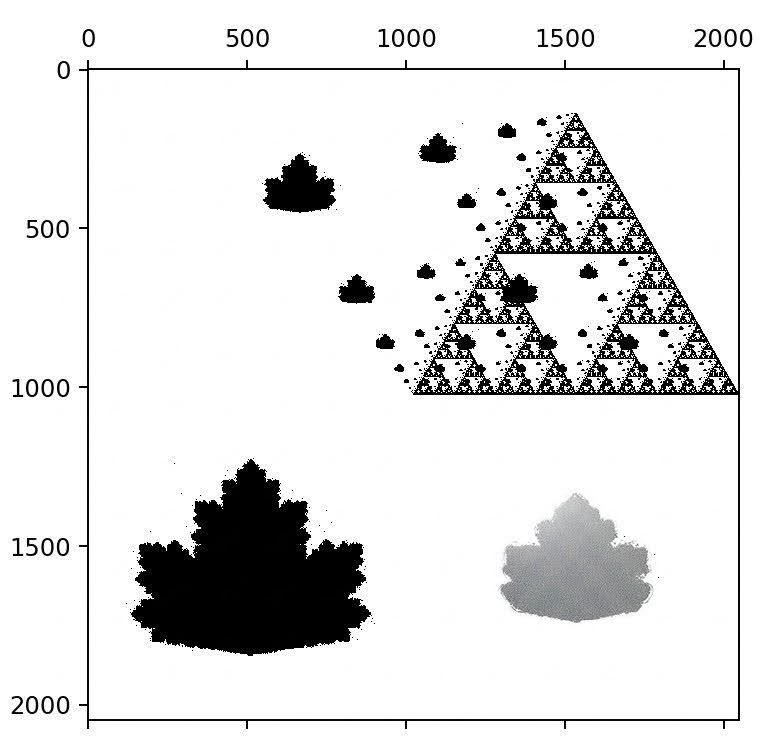}
    \caption{Local attractor $A_{\mathfrak X}$ that is non-transitive and not self-similar, in which the endpoints are marked in grey and $A^\infty_{\mathfrak X}$ is marked in black (cf. Example~\ref{ex:MapleSierpinski})}
    \label{fig:placeholder}
\end{figure}

  \smallskip
 Proceeeding with the comparison between the classical framework of IFSs and local IFSs,
 a third fundamental distinction 
 becomes apparent 
   
 in the description of the code space and dynamical features of the local attractors. In case of contractive IFSs the code space is a full shift and all points have infinite orbits by arbitrary concatenations of maps $f_j$. In particular, the dynamical properties can be coded by a skew-product (see e.g. Section~\ref{sec:skewgen}).  
This plays a fundamental role not only of the topological description of the attractor but, most importantly, on the development of a thermodynamic formalism that allows one to describe the geometric aspects of the attractors including the computation and study of regularity of their Hausdorff dimension
and their stationary measures (see e.g. \cite{FU09,Fen23a,HK25,MU96}  and references therein).

In case of contractive local IFSs, the space of orbits of a local IFS is in correspondance with the space of all admissible compositions of the maps $f_j$. In consequence,  the code space of a local IFS is identified with a (possibly proper) invariant subset of the shift space. The formulation of theses correspondance involves some intricate constructions of invariant subsets of the one-sided and two-sided shifts  
(cf. Theorem~\ref{thmC}, Theorem~\ref{thm:D} and Corollary~\ref{cor: thmD} for the precise statements).
In particular, the previous characterization allowed us to describe an essential part of the Hutchinson-Barnsley fractal operator, on which one still obtains contraction and convergence to the local attractor in the Hausdorff metric (see Theorem~\ref{thmB} for the precise statement).

Local IFSs can also be compared to some generalizations of the concept of IFS, namely  graph-directed IFSs and inhomogeneous IFSs, introduced in Mauldin and Williams \cite{MW88} and Barnsley and Demko \cite{BarnsleyDemko1985}, respectively.
In respect to the first class, we prove that every graph-directed IFS 
on a compact metric space $X$ can be realized as a contractive local IFS on some enriched compact metric space (cf. Theorem~\ref{thm:graphdirected}
and Remark~\ref{rmk:comparisongraph}).

As for the second class, inhomogeneous IFSs offer a generalization of the concept of IFS where an additional fixed set $C$ (named condensation set) is considered, and one describes the attractor $A_C\subseteq X$ which is a fixed point of the operator 
$$
F_C(B)=\bigcup_{1\le j \le n} f_{j}(B) \; \cup \; C,
\quad \text{for each $B \in 2^X$.}
$$
In general, as $F_C(B)\supset F(B)$ for every $B\subset X$ the attractor $A_C$ always contains the global attractor $A_R$ of the original IFS. Moreover, it is not hard to check that $A_C=A_R$ if and only if $C$ is contained in the global attractor $A_R$.
As the local attractors are always contained in the global attractors $A_R$, these cannot be realized as non-trivial inhomogeneous attractors.

\smallskip
Building upon the contributions that form the basis of the theory of local IFSs, we anticipate that this framework can serve as a foundation for further extensions concerning the study of their ergodic and dimensional properties, possibly extending \cite{Klo22,Fen23b,MU96} to this more general context. 
A first development is the study of the topological stability and shadowing properties for local IFSs and, in particular, establish the topological stability of contractive IFSs.
In this general framework this gives rise to a new notion, namely the concept of combinatorial stability. 
We refer the reader to \cite{OV26} for the precise definitions and statements.
We also expect that the results here can be used in developments on the relaxation of the contractivity conditions, the development of a theory of local Fuzzy IFSs (the existence of the fuzzy local attractor will appear in \cite{OSV26}) and to consider wider classes of maps, thereby broadening the scope and applicability of the theory to a wider class of dynamical systems
as obtained in \cite{LKN25,Mic20,OS18,Ste99} for IFSs (see also references therein), as well as applications to the study of foliations and their associated holonomy pseudogroups, which are natural examples of local IFSs (cf. \cite{GLW88,Wal04}).

\medskip
The paper is organized as follows. In Section~\ref{sec:mainresults} we define the basic notions related to the topology and geometry of attractors for local IFSs and state the main results. 
The existence of a compact attractor for continuous local IFSs   
is proved in Section~\ref{sec:proofs1}. 
The combinatorial description of the code space of the attractor and the space of admissible compositions is given in Section~\ref{sec:codespace}. 
In Sections~\ref{sec:orbits} and ~\ref{sec:skewtype} we construct enlarged shift spaces describing the space of orbits and admissible compositions of a local IFS on a local attractor $A_{\mathfrak X}$ and on the (possibly proper) component $A^\infty_{\mathfrak X}$ of the attractor formed by points with infinite orbits. 
In Section~\ref{sec:exponentialbasin} we establish an exponential approximation of the attractor by certain essential subsets in the iterates of the Hutchinson-Barnsley operator.
As the main results in the paper have a wide range of applications, we discuss  a series of examples at Section~\ref{sec:examples}.

\section{Main results}\label{sec:mainresults}
This section is devoted to the statement of our main results on local iterated function systems.  In Subsection~\ref{subsec:general-statements} we establish the existence of topological attractors for general local IFSs and study their basin of attraction.

\subsection{Local iterated function systems}
Let us recall the concept of local iterated function systems,  which can be traced back to \cite{BH93,Fis95,BHM14}.
Let $(X,d)$ be a compact metric space and let $ 2^{X} $ denote the hyperspace of subsets of $X$.  

\begin{definition}\label{def:local IFS}
   A \emph{local IFS} 
   on $X$ is a structure $R_{\mathfrak X}=(X_j, f_{j})_{1\le j \le n}$ where 
  $n \geq 2$ is an integer, $X_1, \ldots, X_n \in 2^{X}$ is an arbitrary family of closed subsets of $X$, and $f_{j}: X_{j} \to X$ is a continuous map for each $1\le j\le n$. We shall refer to the family $\mathfrak X=(X_j)_{1\le j \le n}$ as the set of restrictions of the local IFS.
\end{definition}

The special case in which $X_j=X$ for each $\le j \le n$ corresponds to the usual notion of iterated function system (IFS). In particular the notion of local IFS not only generalizes the notion of IFS 
as it will allow a wide range of new phenomena and attractors which may not be self-similar (see Section~\ref{sec:examples}). 
Since the continuous maps on metric spaces $f_j: X_j \to X$ admit a continuous extension $\tilde f_j : X \to X$
we will adopt the perspective that the relevant invariant sets for a local iterated function system are subsets of an attractor for an IFS.  

\begin{definition}\label{def:loc fract operat}
   The \emph{local Hutchinson-Barnsley operator}\index{Local Hutchinson-Barnsley operator} $F_{\mathfrak X}:  2^{X} \to 2^{X} $ is given by
$$F_{\mathfrak X}(B)=\bigcup_{1\le j \le n} f_{j}(B \cap X_{j}),$$
for each $B \in 2^X$. 
A set $B\subset X$ is called \emph{$F_{\mathfrak X}$-positively invariant}\index{$F_{\mathfrak X}$-positively invariant} if $F_{\mathfrak X}(B) \subseteq B$. 
If $F_{\mathfrak X}(B)=B$ then we say that $B$ is a 
\emph{$F_{\mathfrak X}$-invariant set}\index{$F_{\mathfrak X}$-invariant set}.
\end{definition}

We denote by
\begin{equation}\label{eq:definvFloc}
\operatorname{Fix}(F_{\mathfrak X})=\{B \in  2^{X} : F_{\mathfrak X}(B)=B\}    
\end{equation}
the set of all invariant sets of $R_{\mathfrak X}$.
It is clear from the definition that a local IFS could a priori admit several invariant sets, and that not all of them would eventually have the property of attracting some open set of points.
 
One can prove that, under the previous assumptions, there exists a unique $F_{\mathfrak X}$-invariant set  $A_{\mathfrak X}\subseteq X$ in which is maximal with respect to the inclusion (cf. 
Proposition~\ref{prop: set property local attract}).
We emphasize that the closedness of the elements in $\mathfrak X=(X_j)_{1\le j \le n}$  cannot be removed (cf. Example~\ref{exam:filip}).
  
Furthermore, in such degree of generality it may occur that $\operatorname{Fix}(F_{\mathfrak X})=\{\emptyset\}$ and that $A_{\mathfrak X}=\emptyset$ (e.g. in the trivial case that $X_j=\emptyset$ for all $1\le j \le n$). This motivates the following notion.

\begin{definition}
    \label{def: the local attractor}
    Let $R_{\mathfrak X}=(X_j, f_{j})_{1\le j \le n}$ be a local IFS. If $A_{\mathfrak X}\neq\emptyset$ we will refer to $A_{\mathfrak X}$  as the \emph{local attractor for $R_{\mathfrak X}$.}\index{Local attractor! For $R_{\mathfrak X}$.}
\end{definition}

In general the equality $A_{\mathfrak X}= \bigcup_{B \in \operatorname{Fix}(F_{\mathfrak X})} B$ (see the proof of 
Proposition~\ref{prop: set property local attract}) 
does not say much on the structure and properties of the set $A_{\mathfrak X}$. Thus, the study of the geometric properties of the local attractor and its dependence with respect to the domains $X_j$ and the maps $f_j$ will rely on further assumptions on the regularity of the latter. 

\subsection{Topological aspects}
 
\label{subsec:general-statements}

Our first result, whose proof will be given in Section~\ref{sec:proofs1},  ensures the existence and characterization of the maximal $F_{\mathfrak X}$-invariant compact set $A_{\mathfrak X}$.

Recall that, given a metric space $(X,d)$, the \emph{Hausdorff-Pompeiu distance}\index{Hausdorff-Pompeiu distance} on 
$K^*(X)$ is the metric  $\text{dist}_H: K^*(X) \times K^*(X) \to \mathbb{R}$  defined by
$$\text{dist}_H(A,B)=\max\{d(A,B), d(B,A)\}=\max\{\max_{a\in A}d(a,B), \max_{b\in B}d(b,A)\},$$
for all $A,B \in K^*(X)$ (here $K^*(X)$ denotes the family of non-empty compact subsets of $X$).
In the course of our studies, it was natural to consider the following extensions of the classical notion of topological basin of attraction for IFSs.

\begin{definition}\label{def: new basin of attraction}
Consider the family of positively invariant compact sets 
   $$
   \mathcal{B}_{\mathfrak X}^{\, \text{inv}} =\Big\{A_0 \in K^*(X) \; : \;  F_{\mathfrak X}(A_0)\subseteq A_0 \quad\text{and}\quad A_{\mathfrak X} \subseteq A_0\Big\},
   $$
the \emph{nested basin of attraction of $A_{\mathfrak X}$}\index{Nested basin of attraction of $A_{\mathfrak X}$} defined as 
   $$
   \mathcal{B}_{\mathfrak X}^{\, \text{out}} =\Big\{A_0 \in K^*(X) \; : \; \quad  \bigcap_{i\geq 0} F_{\mathfrak X}^{i}(A_0) = A_{\mathfrak X}\Big\},
   $$
and the \emph{topological basin of attraction of $A_{\mathfrak X}$}\index{Basin of attraction of $A_{\mathfrak X}$} as the family of sets
   $$
   \mathcal{B}_{\mathfrak X} =\Big\{A_0 \in K^*(X) \; : \;   \lim_{i \to \infty} F_{\mathfrak X}^{i}(A_0) = A_{\mathfrak X}\Big\}.
   $$
\end{definition}

The next theorem provides a complete picture of the possible compact sets attracted to the local attractor of a contractive local IFS.

\begin{maintheorem}\label{thm:topological}
Let $R_{\mathfrak X}=(X_j, f_{j})_{1\le j \le n}$ be a local IFS with local attractor $A_{\mathfrak X}$.
The following properties hold:
   \begin{enumerate}
     \item if $A_0 \supseteq A_{\mathfrak X}$ is a $F_{\mathfrak X}$-positively invariant {closed set} then 
     $$A_{\mathfrak X}= \bigcap_{i\geq 0}  F_{\mathfrak X}^{i}(A_0);$$
     \item  
     $A_{\mathfrak X}$ is a compact set  and 
          $A_{\mathfrak X}= \lim_{i\to\infty}  F_{\mathfrak X}^{i}(X),$ 
         with respect to  the Hausdorff-Pompeiu distance;
         \item 
    $\emptyset \neq \mathcal{B}_{\mathfrak X}^{\, \text{inv}} \subseteq \mathcal{B}_{\mathfrak X}^{\, \text{out}} \subseteq \mathcal{B}_{\mathfrak X}\subseteq K^*(X).$ 
   \end{enumerate}
\end{maintheorem}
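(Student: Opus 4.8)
The plan is to lean on two structural features of $F_{\mathfrak X}$: it preserves compactness (each $X_j$ is compact, each $f_j$ is continuous, and a finite union of compacta is compact) and it is monotone with respect to inclusion. For item (1), positive invariance $F_{\mathfrak X}(A_0)\subseteq A_0$ together with monotonicity produces a decreasing chain of nonempty compacta $A_0\supseteq F_{\mathfrak X}(A_0)\supseteq F_{\mathfrak X}^2(A_0)\supseteq\cdots$, whose intersection $A_\infty:=\bigcap_{i\ge 0}F_{\mathfrak X}^i(A_0)$ is compact and contains $A_{\mathfrak X}$ (since $A_{\mathfrak X}=F_{\mathfrak X}^i(A_{\mathfrak X})\subseteq F_{\mathfrak X}^i(A_0)$ for all $i$, using $A_{\mathfrak X}\subseteq A_0$). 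The crux is to prove that $A_\infty$ is $F_{\mathfrak X}$-invariant; once this is known, maximality of $A_{\mathfrak X}$ forces $A_\infty\subseteq A_{\mathfrak X}$, giving $A_\infty=A_{\mathfrak X}$.

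The inclusion $F_{\mathfrak X}(A_\infty)\subseteq A_\infty$ is immediate from monotonicity, since $F_{\mathfrak X}(A_\infty)\subseteq\bigcap_{i\ge 1}F_{\mathfrak X}^i(A_0)=A_\infty$. The reverse inclusion $A_\infty\subseteq F_{\mathfrak X}(A_\infty)$ is the main obstacle, and it is exactly here that the standing hypotheses---closedness of the $X_j$ and continuity of the $f_j$---are indispensable. Given $y\in A_\infty$, for every $i$ one may write $y=f_{j_i}(x_i)$ with $x_i\in F_{\mathfrak X}^i(A_0)\cap X_{j_i}$. Since there are finitely many indices, a pigeonhole argument fixes a single $j$ along a subsequence, and compactness of $A_0$ extracts a convergent subsequence $x_{i_k}\to x$. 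Nestedness places $x$ in every $F_{\mathfrak X}^m(A_0)$ (each being closed), hence $x\in A_\infty$; closedness of $X_j$ gives $x\in X_j$; and continuity of $f_j$ yields $f_j(x)=y$, so $y\in F_{\mathfrak X}(A_\infty)$. This backward selection along a single branch is the technical heart of the whole theorem.

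Item (2) then follows by specializing item (1) to $A_0=X$, which is compact, contains $A_{\mathfrak X}$, and satisfies $F_{\mathfrak X}(X)\subseteq X$; this yields $A_{\mathfrak X}=\bigcap_{i\ge 0}F_{\mathfrak X}^i(X)$ and exhibits $A_{\mathfrak X}$ as an intersection of compacta, hence compact. For the Hausdorff--Pompeiu convergence, the containment $A_{\mathfrak X}\subseteq F_{\mathfrak X}^i(X)$ kills the one-sided distance $d(A_{\mathfrak X},F_{\mathfrak X}^i(X))$, so it remains to show $\sup_{x\in F_{\mathfrak X}^i(X)}d(x,A_{\mathfrak X})\to 0$. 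I would argue by contradiction, extracting (again by compactness and nestedness) an accumulation point of points staying at distance $\ge\varepsilon$ from $A_{\mathfrak X}$ and showing it lies in $\bigcap_i F_{\mathfrak X}^i(X)=A_{\mathfrak X}$, a contradiction.

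Finally, for item (3) the family $\mathcal{B}_{\mathfrak X}^{\text{inv}}$ is nonempty since $X$ belongs to it; the inclusion $\mathcal{B}_{\mathfrak X}^{\text{inv}}\subseteq\mathcal{B}_{\mathfrak X}^{\text{out}}$ is precisely item (1), whose hypotheses coincide with the definition of $\mathcal{B}_{\mathfrak X}^{\text{inv}}$. For $\mathcal{B}_{\mathfrak X}^{\text{out}}\subseteq\mathcal{B}_{\mathfrak X}$, membership in $\mathcal{B}_{\mathfrak X}^{\text{out}}$ forces $A_{\mathfrak X}=\bigcap_i F_{\mathfrak X}^i(A_0)\subseteq F_{\mathfrak X}^i(A_0)$ for every $i$ (so the inner distance vanishes), while monotonicity gives $F_{\mathfrak X}^i(A_0)\subseteq F_{\mathfrak X}^i(X)$, whose Hausdorff distance to $A_{\mathfrak X}$ tends to $0$ by item (2); combining the two bounds gives $F_{\mathfrak X}^i(A_0)\to A_{\mathfrak X}$, that is $A_0\in\mathcal{B}_{\mathfrak X}$. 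The remaining inclusion $\mathcal{B}_{\mathfrak X}\subseteq K^*(X)$ is built into the definitions.
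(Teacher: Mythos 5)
Your proposal is correct, and its backbone coincides with the paper's: monotonicity of $F_{\mathfrak X}$ plus positive invariance gives the nested chain of nonempty compacta; the identity $A_\infty\subseteq F_{\mathfrak X}(A_\infty)$ is obtained exactly as in the paper's Proposition on $F_{\mathfrak X}$ (pigeonhole on the finitely many indices, extraction of a convergent subsequence, closedness of $X_j$ and continuity of $f_j$, with the limit point trapped in every $F_{\mathfrak X}^m(A_0)$ by nestedness), and maximality of $A_{\mathfrak X}$ then forces $A_\infty=A_{\mathfrak X}$; your one-step peeling $y=f_{j_i}(x_i)$ is just a tidier packaging of the paper's full-composition decomposition $x=f_{j_{i,1}}\circ\cdots\circ f_{j_{i,i}}(y_i)$ with the tail grouped as $z_i$. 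The one genuinely different step is your proof of $\mathcal{B}_{\mathfrak X}^{\,\text{out}}\subseteq\mathcal{B}_{\mathfrak X}$: the paper deduces $\lim_i F_{\mathfrak X}^i(A_0)=A_{\mathfrak X}$ from $\bigcap_i F_{\mathfrak X}^i(A_0)=A_{\mathfrak X}$ by citing the standard property of the Hausdorff--Pompeiu metric for \emph{nested} sequences of compacta, but membership in $\mathcal{B}_{\mathfrak X}^{\,\text{out}}$ does not assume positive invariance, so the sequence $(F_{\mathfrak X}^i(A_0))_i$ need not be nested and that citation is not directly applicable. Your two-sided sandwich --- $d(A_{\mathfrak X},F_{\mathfrak X}^i(A_0))=0$ since $A_{\mathfrak X}\subseteq\bigcap_i F_{\mathfrak X}^i(A_0)$, and $d(F_{\mathfrak X}^i(A_0),A_{\mathfrak X})\leq d(F_{\mathfrak X}^i(X),A_{\mathfrak X})\to 0$ by monotonicity and item (2) --- avoids nestedness altogether and thus actually closes a small gap in the paper's argument; likewise your direct $\varepsilon$-contradiction proof of item (2) replaces the paper's appeal to the literature with a self-contained argument. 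Both routes are valid; yours is marginally more robust where $\mathcal{B}_{\mathfrak X}^{\,\text{out}}$ is concerned.
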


Some comments are in order. In case of IFSs, one knows that $\mathcal{B}_{\mathfrak X}= K^*(X)$, hence in this case all the inclusions in Theorem~\ref{thm:topological} item (3) are strict. 
To the best of our knowledge, this is the first time the problem of defining a basin of attraction 
is addressed in the literature (we refer the reader to Example~\ref{ex:differnt-basins} for an explicit example where these notions  $\mathcal{B}_{\mathfrak X}^{\, \text{inv}}, \mathcal{B}_{\mathfrak X}^{\, \text{out}}$ and $\mathcal{B}_{\mathfrak X}$ differ).
In fact, it will be proven below (cf. Proposition~\ref{prop:set prop F_loc}) that every { $F_{\mathfrak X}$-positively invariant closed} subset $A_0 \supseteq A_{\mathfrak X}$
is contained in the basin of attraction of $A_{\mathfrak X}$, and that the latter may fail if the $F_{\mathfrak X}$-invariance condition is removed (cf. Example~\ref{ex:differnt-basins}).

In particular, in the contractive setting $A_{\mathfrak X}, A_R, X \in \mathcal{B}_{\mathfrak X}$, so the latter it is never an empty set. A less immediate property is that, in case the global IFS admits an attractor then it will also belong to the basin of attraction of any local attractor obtained form it (see Corollary~\ref{cor: iterative attractor full attractor} for the precise statement). 

Finally, one could expect that 
$A_{\mathfrak X}\subset \bigcup_{i=1}^n X_i$, however this is not the case (cf. Example~\ref{ex:Elismar}).
The previous result motivates a first natural question about the continuity of the local attractor. 

Item (2) in Theorem~\ref{thm:topological} guarantees that 
$$
A_{\mathfrak X}:= \lim_{i\to\infty}  F_{\mathfrak X}^{i}(X) = \bigcap_{i\ge 0} F_{\mathfrak X}^{i}(X).
$$

   \begin{remark}\label{rmk:restrictivec}
    Previous results on the existence of local attractors for contractive local IFSs, in the spirit of Theorem~\ref{thm:topological}, relied on additional restrictive conditions:
        either 
   $ 
   X=\bigcup_{1\le j \le n} X_j 
   $  
as in the cornerstone contributions \cite{Bar88,Bar06,Fis95}, or alternatively that
   $ 
   f_j(X_j)\subset X_j 
   $ for every $1\le j \le n
   $  
in more recent contributions including \cite{BHM14,GE18,Mas14,Jeb25}.
   \end{remark}

\subsection{Code space for contractive local iterated function systems}\label{subsec:contracting-statements}

Let $(X,d)$ be a complete metric space and let $(X_j)_{1\le j \le n}$ be an arbitrary family of non-empty closed subsets of $X$. In order to have a reasonable Hutchinson-Barnsley theory, we will focus our attention to the following setting, $R= \left(X, f_j \right)_{1\le j \le n}$ is a global IFS and  $R_{\mathfrak X}=(X_j, f_{j})_{1\le j \le n}$ is the associated local IFS obtained by restricting  $f_{j}$ to  each domain $X_{j}$.

\begin{definition}
    \label{def:contractiveLIFS}
    We say that 
    $R_{\mathfrak X}=(X_j, f_{j})_{1\le j \le n}$ is a \emph{contractive local IFS}\index{Contractive local IFS} if for each $1\le j \le n$ the map  $f_{j}: X_{j} \to X$  is a  contraction, that is, there exists $\lambda_j\in [0,1)$ so that 
    $ 
    d(f_j(x), f_j(y))\le \lambda_j \, d(x,y) 
    $ 
    for every $x,y\in X_j.$  
    We say that the local IFS satisfies the \emph{open set condition}\index{Open set condition} (OSC) provided that
$ 
f_i(X_i) \cap f_j(X_j) = \emptyset, \quad \text{for all } 1\le i, j \le n, \; i\neq j. 
$ 
\end{definition}
    
The previous notion generalizes the concept of contractive IFS, which corresponds to the special case that $X_{j}=X$ for every $1\le j \le n$. 

\medskip
The next main result concerns a description of the code space for a contractive local IFS. In the classical context of IFS, the code space is a full shift (cf. Section~\ref{sec:codespace}), in which case 
this symbolic dynamics allows to use the dynamical features of such shift spaces, which are very well known.  In the more general framework of local IFS, the code space can be substantially more intricate. 

\medskip

In order to prove a classification theorem, consider the space $\Sigma^-=\{1,2, \dots, n\}^{-\mathbb N}$ endowed with the metric
$$ 
d_-(\underline a, \underline b) = \exp\Big(\,-\sup\{N\ge 1\colon a_j = b_j, \;\text{for every}\; -N\le j \le -1 \}\Big)
$$ 
for every $\underline a, \underline b\in \Sigma^-$, where $\sup \emptyset =0$.  The next result shows that every contractive local IFS is semiconjugate to a contractive local (standard) IFS on a shift space.

\begin{maintheorem}\label{thm:classification}
Let $(X,d)$ be a compact metric space and $X_j\subset X$, for $1\le j \le n$. If $R_{\mathfrak X}=(X_j,f_j)_{1\le j \le n}$ is a contractive local IFS then there exists a $\sigma^-$-invariant and compact shift space $\Sigma^-_{\mathfrak X} \subset \Sigma^{-}$, a H\"older continuous surjective map $\pi_{\mathfrak X}: \Sigma^-_{\mathfrak X} \to A_{\mathfrak X}$ and 
subsets $B_j=\pi_{\mathfrak X}^{-1}(X_j)\subset \Sigma^-_{\mathfrak X}$ so that:
\begin{enumerate}
    \item $\Sigma^-_{\mathfrak X}=\{\underline b \in \Sigma^- \colon  \pi_{\mathfrak X}(\underline b) \neq \emptyset \} \subseteq \Sigma^-$;
    \item $f_j\circ \pi(\underline b)=\pi_{\mathfrak X} \circ \tau_j(\underline b)$ for every $\underline b\in B_j$.
\end{enumerate}
In particular $R_{\mathfrak X}$ is semiconjugate to the local IFS $\mathcal S_{\mathfrak B}=(B_j,\tau_j)_{1\le j \le n}$ and, if 
the contractive local IFS satisfies the open set condition then $\pi_{\mathfrak X}$ is a conjugacy.
  
\end{maintheorem}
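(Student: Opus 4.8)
The plan is to realize the code space as the set of admissible one-sided sequences and to build $\pi_{\mathfrak X}$ as a limit of nested local cylinders. For a finite word $w=(b_{-N},\dots,b_{-1})$ I would set
\[
C_w=f_{b_{-1}}\big(X_{b_{-1}}\cap f_{b_{-2}}\big(X_{b_{-2}}\cap\cdots\cap f_{b_{-N}}(X_{b_{-N}})\cdots\big)\big),
\]
which is exactly the piece of $F_{\mathfrak X}^{N}(X)$ produced by the composition coded by $w$; indeed $F_{\mathfrak X}^{N}(X)=\bigcup_{|w|=N}C_w$. For a fixed $\underline b\in\Sigma^-$ with truncations $w_N=w_N(\underline b)=(b_{-N},\dots,b_{-1})$, monotonicity of $f_j$ and of intersection gives $C_{w_{N+1}}\subseteq C_{w_N}$, while contraction gives $\diam C_{w_N}\le \lambda_{\max}^{N}\diam X\to 0$ with $\lambda_{\max}=\max_j\lambda_j<1$. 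Hence $\pi_{\mathfrak X}(\underline b):=\bigcap_{N\ge 1}C_{w_N}$ is either a single point or empty, and I define $\Sigma^-_{\mathfrak X}=\{\underline b:\pi_{\mathfrak X}(\underline b)\neq\emptyset\}$, which is precisely assertion (1).

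Next I would establish the topological properties of $\Sigma^-_{\mathfrak X}$ and of $\pi_{\mathfrak X}$. The condition $C_{w_N}\neq\emptyset$ depends only on $b_{-1},\dots,b_{-N}$, so it is a clopen condition; therefore $\Sigma^-_{\mathfrak X}=\bigcap_N\{C_{w_N}\neq\emptyset\}$ is closed, hence compact. Since $C_{w_{N+1}(\underline b)}=f_{b_{-1}}(X_{b_{-1}}\cap C_{w_N(\sigma^-\underline b)})$, nonemptiness of the left-hand side forces $C_{w_N(\sigma^-\underline b)}\neq\emptyset$ for all $N$, giving $\sigma^-$-invariance. As $\pi_{\mathfrak X}(\underline b)\in C_{w_N}\subseteq F_{\mathfrak X}^{N}(X)$ for every $N$, Theorem~\ref{thm:topological}(2) yields $\pi_{\mathfrak X}(\underline b)\in\bigcap_N F_{\mathfrak X}^N(X)=A_{\mathfrak X}$. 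H\"older continuity is then immediate: if $\underline a,\underline b$ agree on positions $-1,\dots,-N$ they share the cylinder $C_{w_N}$, so $d(\pi_{\mathfrak X}\underline a,\pi_{\mathfrak X}\underline b)\le \lambda_{\max}^N\diam X=d_-(\underline a,\underline b)^{\alpha}\diam X$ with $\alpha=-\log\lambda_{\max}>0$.

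For surjectivity I would fix $x\in A_{\mathfrak X}=\bigcap_N F_{\mathfrak X}^N(X)$ and consider the tree of words $w$ with $x\in C_w$; each level is nonempty (since $x\in F_{\mathfrak X}^N(X)$) and finite, and the nesting $C_{w_{N+1}}\subseteq C_{w_N}$ shows the tree is closed under dropping the innermost symbol, so K\"onig's lemma furnishes an infinite branch $\underline b\in\Sigma^-_{\mathfrak X}$ with $x\in C_{w_N(\underline b)}$ for all $N$, whence $\pi_{\mathfrak X}(\underline b)=x$. For the commutation relation (2), writing $C_{w_N(\tau_j\underline b)}=f_j(X_j\cap C_{w_{N-1}(\underline b)})$ and using that for a nested sequence of compacta $K_N$ with continuous $f_j$ one has $\bigcap_N f_j(K_N)=f_j(\bigcap_N K_N)$, I obtain $\pi_{\mathfrak X}(\tau_j\underline b)=f_j(X_j\cap\{\pi_{\mathfrak X}(\underline b)\})$; this is nonempty exactly when $\pi_{\mathfrak X}(\underline b)\in X_j$, i.e. when $\underline b\in B_j=\pi_{\mathfrak X}^{-1}(X_j)$, in which case it equals $f_j(\pi_{\mathfrak X}(\underline b))$. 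In particular $\tau_j(B_j)\subseteq\Sigma^-_{\mathfrak X}$, so $\mathcal S_{\mathfrak B}=(B_j,\tau_j)_{1\le j\le n}$ is a contractive local IFS semiconjugate to $R_{\mathfrak X}$ via $\pi_{\mathfrak X}$.

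Finally, under the open set condition I would upgrade the semiconjugacy to a conjugacy by proving $\pi_{\mathfrak X}$ injective. From $\pi_{\mathfrak X}(\underline b)\in C_{w_1}\subseteq f_{b_{-1}}(X_{b_{-1}})$ and the disjointness $f_i(X_i)\cap f_j(X_j)=\emptyset$ the first symbol $b_{-1}$ is determined by the point; moreover the cylinder identity shows $\sigma^-\underline b\in B_{b_{-1}}$ and $\pi_{\mathfrak X}(\underline b)=f_{b_{-1}}(\pi_{\mathfrak X}(\sigma^-\underline b))$, so injectivity of the branch $f_{b_{-1}}$ lets me peel off symbols one at a time and conclude $\underline a=\underline b$ whenever $\pi_{\mathfrak X}(\underline a)=\pi_{\mathfrak X}(\underline b)$; being a continuous bijection between compact metric spaces, $\pi_{\mathfrak X}$ is then a homeomorphism, hence a conjugacy. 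The main obstacle I anticipate is twofold: justifying the interchange of the continuous branch maps with the nested intersections in (2), and thereby the exact identification of $\Sigma^-_{\mathfrak X}$ with the nonempty fibers rather than merely a subset; and, in the open set case, the injectivity step, which genuinely uses that each contraction $f_j$ is injective — a point worth isolating, since a degenerate branch with $\lambda_j=0$ would break it.
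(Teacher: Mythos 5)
Your construction is the paper's own: the cylinders $C_w$ coincide with the sets $V_{[\underline b]_k}$ (with the paper's convention $f_{b_j}(A)=f_{b_j}(A\cap X_{b_j})$), and your definitions of $\pi_{\mathfrak X}$ and $\Sigma^-_{\mathfrak X}$, the compactness argument via finitely-determined nonemptiness of cylinders (Lemma~\ref{le:compactcodespace}), and the H\"older estimate $d(\pi_{\mathfrak X}\underline a,\pi_{\mathfrak X}\underline b)\le \lambda_{\max}^N\diam X$ are all identical to the paper's. You deviate in three places, each correctly. For surjectivity the paper recursively pulls back $x\in A_{\mathfrak X}$ through the fixed-point identity $A_{\mathfrak X}=\bigcup_j f_j(A_{\mathfrak X}\cap X_j)$ and then obtains $\pi_{\mathfrak X}(\Sigma^-_{\mathfrak X})=A_{\mathfrak X}$ from maximality of the attractor (Proposition~\ref{prop: properties code map}), whereas you run K\"onig's lemma on the tree of cylinders containing $x$, using only $A_{\mathfrak X}=\bigcap_N F_{\mathfrak X}^N(X)$ from Theorem~\ref{thm:topological}; the two are equally rigorous, and yours has the mild advantage of not invoking the invariance identity or maximality. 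Second, you prove only forward invariance $\sigma^-(\Sigma^-_{\mathfrak X})\subseteq\Sigma^-_{\mathfrak X}$; the paper's Proposition~\ref{prop: properties code map}(4) asserts the equality $\sigma^-(\Sigma^-_{\mathfrak X})=\Sigma^-_{\mathfrak X}$, which in fact fails in the presence of endpoints (in Example~\ref{ex:Elismar} one has $\sigma^-(\Sigma_2)=\{1,2\}^{-\mathbb N}\subsetneq\Sigma_2$), so your weaker claim is both sufficient for the statement and the correct one. Third, the paper's written proof ends after the H\"older estimate and never carries out the OSC-implies-conjugacy step; your peeling argument via $\pi_{\mathfrak X}(\underline b)=f_{b_{-1}}(\pi_{\mathfrak X}(\sigma^-\underline b))$ supplies it, and your caveat is a genuine catch: the step needs each branch $f_j$ to be injective, which contractivity with $\lambda_j\in[0,1)$ does not provide (a constant branch satisfies the OSC as stated yet destroys injectivity of $\pi_{\mathfrak X}$), so the final claim of the theorem implicitly assumes injective branches. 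You also rightly isolate, and correctly justify, the interchange $\bigcap_N f_j(K_N)=f_j\bigl(\bigcap_N K_N\bigr)$ for nested compacta, which underlies both item (2) and the identification of $B_j$ with $\pi_{\mathfrak X}^{-1}(X_j)$.
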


\subsection{Generalized shift spaces}
\label{sec:skewgen}
In this section we present some of the core results in the paper, which characterize the orbits of a contractive local IFS.  Recall that the orbits of a contractive IFS $R=(X,f_i)_{1\le i \le n}$ can be described through the skew-product
\begin{equation}
    \label{eq:skewP}
    \begin{array}{rccc}
S: & X \times \Sigma & \to &  X \times  \Sigma  \\
    & (x,\underline a) & \mapsto & (f_{a_0}(x),\sigma(\underline a)),
\end{array}
\end{equation}
where $\Sigma=\{1,2, \dots, n\}^{\mathbb N_0}$, endowed with the usual distance, and $\sigma:\Sigma \to\Sigma$ denotes the usual shift. Moreover, its attractor $A_R$ is maximal invariant in the sense that it is not hard to show that it can be written as
\begin{equation}
    \label{eq:maxinv}
A_R =\Big\{x\in X \colon (f_{a_j}\circ \dots \circ f_{a_0})(x) \in \bigcup_{i=1}^n X_i, \, \text{for every}\, \underline a \in \Sigma \;  \text{and} \;j\ge 1\Big\}
\end{equation}
  
where $X_i=f_i(X)$.
The next main results, whose proofs occupy Section~\ref{sec:orbits}, offer extended shift spaces as a generalizations of the previous concepts. Let us introduce some notation.
\medskip

Given a contractive local IFS  $R_{\mathfrak X}=(X_j,f_j)_{1\le j \le n}$ let us first consider set 
\begin{equation}
    \label{eq:def-enlarged-Omega}
    \Omega =\Big\{((x_i,a_i))_{i\ge 0} \in (X\times \{1, 2, \dots, n\})^{\mathbb N} \colon x_{i+1}=f_{ a_i}(x_i) \; \text{for every}\; i\ge 0 \Big\}
\end{equation}
of points which admit infinite orbits
(it is implicitly assumed that $x_i\in X_{a_i}$ for every $i\ge 0$). 
This set is in analogy with ~\eqref{eq:maxinv}.
Given $k\ge 0$, denote by $\pi_{k,X}: \Omega \to X$ the projection given by $\pi_{k,X}(((x_i,a_i))_{i\ge 0})=x_k$. 
\smallskip

Consider also the shift map $\bar \sigma: \Omega \to \Omega$ given by
\begin{equation}
    \label{eq:extendedsigma}
\bar \sigma(((x_0,a_0),(x_1,a_1), (x_2,a_2), \dots))= ((x_1,a_1), (x_2,a_2), (x_3,a_3) \dots)
\end{equation}
and the maximal invariant subset $A^{\infty}_{\mathfrak X}$ of the attractor $A_{\mathfrak X}$ defined as 
\begin{equation}
    \label{eq:maxinvA}
    A^{\infty}_{\mathfrak X} =\big\{x\in A_{\mathfrak X} \colon \exists (a_j)_{j\ge 0} \text{ so that }  (f_{a_j}\circ \dots \circ f_{a_0})(x) \in \bigcup_{i=1}^n X_i, \, \text{for every}\, j\ge 0\big\}.
\end{equation}
The set $A^{\infty}_{\mathfrak X}$ is formed by points in the attractor $A_{\mathfrak X}$ which admit infinite orbits (this can be a proper subset of the local attractor, as illustrated in  Example~\ref{ex:Elismar}). Moreover,
by compactness of the shift space, one can write alternatively
\begin{align*}
 A^{\infty}_{\mathfrak X} 
    & =\big\{x\in A_{\mathfrak X} \colon \forall \, \ell\ge 0,\;  \exists (a_j)_{0\le j \le \ell} \text{ so that }  (f_{a_\ell}\circ \dots \circ f_{a_0})(x) \in \bigcup_{i=1}^n X_i, \, \big\} \\
    & = \bigcap_{\ell \ge 1} \; \bigcup_{(a_0, a_1, \dots, a_\ell)} (f_{a_\ell} \circ \dots \circ f_{a_0})^{-1} \big( \bigcup_{i=1}^n X_i \big)
\end{align*}
which is countable and nested sequence of closed, hence compact, subsets of $X$. Nevertheless, even though the compactness of $A^{\infty}_{\mathfrak X}$,  the set $A_{\mathfrak X} \setminus A^\infty_{\mathfrak X}$ may accumulate on $A^\infty_{\mathfrak X}$
(cf. Example~\ref{ex:shift}).

Finally, it is worth mentioning that the set $A^{\infty}_{\mathfrak X}$ defined above is in strong analogy with the global attractor $A_R$ (recall its characterization as maximal invariant set in ~\eqref{eq:maxinv}), the difference being that points in $A_R$ admit infinite orbits by all possible concatenations of maps, while points in $A^{\infty}_{\mathfrak X}$ are those that admit at least an infinite orbit.

\begin{maintheorem}
    \label{thmC} 
Let $R_{\mathfrak X}=(X_j,f_j)_{1\le j \le n}$ be a contractive local IFS with a local attractor $A_{\mathfrak X}$.
 
Then $A^\infty_{\mathfrak X}\neq \emptyset$. 
Furthermore, there exists $\widehat \Sigma_{\mathfrak X}\subset \Sigma^-_{\mathfrak X} \times \Sigma^+$ so that
\begin{enumerate}
    \item $\pi_{\mathfrak X}\circ\pi_{\Sigma^- }(\widehat \Sigma_{\mathfrak X})=A^\infty_{\mathfrak X}$;
    \item $\widehat \Sigma_{\mathfrak X}$ is invariant by the two-sided shift $\widehat \sigma$ defined by 
    $$
\widehat \sigma ( \dots , b_{-3}, b_{-2}, b_{-1}, \boxed{a_0}, a_1, a_2, a_3,\dots)
    = 
     ( \dots , b_{-2}, b_{-1}, a_0, \boxed{a_1}, a_2, a_3, a_4, \dots);
$$
    \item     for each $(\underline b,\underline a) \in \widehat\Sigma_{\mathfrak X}$ it holds that
$$
\widehat \sigma(\underline b, \underline a)=(\underline b \ast a_0, \sigma(\underline a)) = (\tau_{a_0}(\underline b), \sigma(\underline a)).
$$
\end{enumerate}
\end{maintheorem}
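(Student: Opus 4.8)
The plan is to construct $\widehat\Sigma_{\mathfrak X}$ explicitly as the set of itinerary pairs $(\underline b,\underline a)$ for which the coded point $\pi_{\mathfrak X}(\underline b)$ admits the infinite forward orbit prescribed by $\underline a$. Writing $\tau_{a_{k-1}}\circ\dots\circ\tau_{a_0}$ for the successive right-concatenations, I would set
\[
\widehat\Sigma_{\mathfrak X}=\Big\{(\underline b,\underline a)\in\Sigma^-_{\mathfrak X}\times\Sigma^+\ :\ \tau_{a_{k-1}}\circ\dots\circ\tau_{a_0}(\underline b)\in B_{a_k}\ \text{ for every } k\ge 0\Big\},
\]
with the convention that the empty composition ($k=0$) imposes $\underline b\in B_{a_0}$. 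Since each $B_j=\pi_{\mathfrak X}^{-1}(X_j)$ is closed and the concatenation maps are continuous, a routine argument shows $\widehat\Sigma_{\mathfrak X}$ is closed, hence compact. Applying the semiconjugacy relation of Theorem~\ref{thm:classification} inductively, membership $(\underline b,\underline a)\in\widehat\Sigma_{\mathfrak X}$ is equivalent to saying that $x=\pi_{\mathfrak X}(\underline b)$ satisfies $x\in X_{a_0}$ and $(f_{a_{k-1}}\circ\dots\circ f_{a_0})(x)\in X_{a_k}$ for every $k\ge 1$; that is, $\underline a$ records a genuine infinite forward orbit of $x$.

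I would first establish $A^\infty_{\mathfrak X}\neq\emptyset$. Since $A_{\mathfrak X}$ is nonempty and $F_{\mathfrak X}(A_{\mathfrak X})=A_{\mathfrak X}$, every point lies in $\bigcup_j f_j(A_{\mathfrak X}\cap X_j)$; iterating produces, for any $z_0\in A_{\mathfrak X}$, a full backward orbit $(z_{-k})_{k\ge 0}$ with $z_{-k}\in A_{\mathfrak X}\cap X_{c_{-k}}$ and $f_{c_{-k}}(z_{-k})=z_{-k+1}$. Read forward, $z_{-k}$ admits an orbit passing through the domains $X_{c_{-k}},\dots,X_{c_{-1}}$, so it lies in the level set $E_\ell:=\bigcup_{(a_0,\dots,a_\ell)}(f_{a_\ell}\circ\dots\circ f_{a_0})^{-1}\big(\bigcup_{i=1}^n X_i\big)$ for $\ell=k-2$. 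Hence each $E_\ell\cap A_{\mathfrak X}$ is a nonempty compact set; these sets are nested, and by the nested characterization of $A^\infty_{\mathfrak X}$ recorded before the theorem their intersection is exactly $A^\infty_{\mathfrak X}$. Cantor's intersection theorem then gives $A^\infty_{\mathfrak X}\neq\emptyset$.

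Item (1) follows from the orbit interpretation above. For the inclusion $\supseteq$, given $x\in A^\infty_{\mathfrak X}$ I choose $\underline b\in\Sigma^-_{\mathfrak X}$ with $\pi_{\mathfrak X}(\underline b)=x$ by surjectivity and an itinerary $\underline a$ recording an infinite forward orbit of $x$; the iterated semiconjugacy yields $\tau_{a_{k-1}}\circ\dots\circ\tau_{a_0}(\underline b)\in B_{a_k}$ for all $k$, so $(\underline b,\underline a)\in\widehat\Sigma_{\mathfrak X}$ and $\pi_{\mathfrak X}\circ\pi_{\Sigma^-}(\underline b,\underline a)=x$. Conversely any $(\underline b,\underline a)\in\widehat\Sigma_{\mathfrak X}$ produces an infinite forward orbit of $\pi_{\mathfrak X}(\underline b)\in A_{\mathfrak X}$, whence $\pi_{\mathfrak X}(\underline b)\in A^\infty_{\mathfrak X}$. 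Item (3) is immediate from the definition of the two-sided shift: the boxed symbol $a_0$ is relocated to position $-1$, so $\widehat\sigma(\underline b,\underline a)=(\underline b\ast a_0,\sigma(\underline a))$ and $\underline b\ast a_0=\tau_{a_0}(\underline b)$ by the very definition of $\tau_{a_0}$.

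For item (2), forward invariance $\widehat\sigma(\widehat\Sigma_{\mathfrak X})\subseteq\widehat\Sigma_{\mathfrak X}$ is direct: if $(\underline b,\underline a)\in\widehat\Sigma_{\mathfrak X}$ then $\underline b\in B_{a_0}$, so $\tau_{a_0}(\underline b)\in\Sigma^-_{\mathfrak X}$, and the conditions at levels $k\ge 1$ for $(\underline b,\underline a)$ are precisely the conditions at levels $k\ge 0$ for $(\tau_{a_0}(\underline b),\sigma(\underline a))$. The reverse inclusion, which makes $\widehat\Sigma_{\mathfrak X}$ genuinely $\widehat\sigma$-invariant, is the delicate point: given $(\underline b',\underline a')\in\widehat\Sigma_{\mathfrak X}$ one must check that the preimage $(\sigma^-(\underline b'),\,b'_{-1}\underline a')$ belongs to $\widehat\Sigma_{\mathfrak X}$, and the crux is the backward-extension statement that $\pi_{\mathfrak X}(\sigma^-(\underline b'))\in X_{b'_{-1}}$ with $f_{b'_{-1}}(\pi_{\mathfrak X}(\sigma^-(\underline b')))=\pi_{\mathfrak X}(\underline b')$. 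I expect this to be the main obstacle, since it cannot be read off the listed properties of $\pi_{\mathfrak X}$ but instead requires the explicit construction of the coding map in the proof of Theorem~\ref{thm:classification}: because each $f_j$ is defined only on $X_j$, the validity of the composition defining $\pi_{\mathfrak X}(\underline b')$ forces the intermediate point $\pi_{\mathfrak X}(\sigma^-(\underline b'))$ into $X_{b'_{-1}}$, and closedness of $X_{b'_{-1}}$ preserves this under the limit. Granting this, the $\sigma^-$-invariance of $\Sigma^-_{\mathfrak X}$ together with the tail-of-orbit argument completes the reverse inclusion and hence item (2).
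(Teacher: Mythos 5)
Your proposal is correct, and it reaches the same set $\widehat\Sigma_{\mathfrak X}$ as the paper, but by a leaner and purely symbolic route. The paper first builds the enlarged orbit space $\Omega$ of \eqref{eq:def-enlarged-Omega}, passes to $\Omega_\infty=\bigcap_k\bar\sigma^k(\Omega)$ and $\Sigma_\infty=\pi_{\Sigma^+}(\Omega_\infty)$ (Lemma~\ref{le:inv-attractor}), and defines $\widehat\Sigma_{\mathfrak X}$ as the pairs $(\underline b,\underline a)$ with $\underline a\in\pi_{\Sigma^+}(\Omega^x_\infty)$ for $x=\pi_{\mathfrak X}(\underline b)$; you instead encode the same admissibility condition directly through the semiconjugacy data $(B_j,\tau_j)$ of Theorem~\ref{thm:classification}, via $\tau_{a_{k-1}}\circ\dots\circ\tau_{a_0}(\underline b)\in B_{a_k}$ for all $k\ge 0$, which by Proposition~\ref{prop: properties code map}(2) is equivalent to the paper's orbit condition and dispenses with the auxiliary spaces $\Omega$, $\Omega_\infty$, $\Sigma_\infty$ altogether. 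For $A^\infty_{\mathfrak X}\neq\emptyset$, the paper's Proposition~\ref{prop:keyorbit} extracts an infinite forward orbit by a pigeonhole/diagonal argument on finite backward orbits, whereas you exploit the nested compact level sets $E_\ell$ already recorded in the paper's displayed reformulation of \eqref{eq:maxinvA} and conclude by Cantor's intersection theorem; the two arguments are equivalent compactness devices, yours being arguably shorter given that the paper states the $E_\ell$-characterization anyway. Finally, on item (2) you actually prove strictly more than the paper: Proposition~\ref{prop:shiftwelldef} only establishes forward invariance $\widehat\sigma(\widehat\Sigma_{\mathfrak X})\subseteq\widehat\Sigma_{\mathfrak X}$ (i.e.\ well-definedness), while you also verify the preimage direction. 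The backward-extension fact you correctly single out as the crux --- that $\pi_{\mathfrak X}(\sigma^-(\underline b'))\in X_{b'_{-1}}$ and $f_{b'_{-1}}\bigl(\pi_{\mathfrak X}(\sigma^-(\underline b'))\bigr)=\pi_{\mathfrak X}(\underline b')$ --- is not an obstacle in the end: it is exactly what is established (from nonemptiness of $V_{[\sigma^-(\underline b')]_k}\cap X_{b'_{-1}}$, shrinking diameters, and closedness of $X_{b'_{-1}}$) inside the paper's proof of items (3)--(4) of Proposition~\ref{prop: properties code map}, where the identity $y=f_{b_{-1}}\bigl(\pi_{\mathfrak X}((\ldots,b_{-3},b_{-2}))\bigr)$ and the $\sigma^-$-invariance $\sigma^-(\Sigma^-_{\mathfrak X})=\Sigma^-_{\mathfrak X}$ are derived, so your sketch closes without new ideas. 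Your closedness/compactness observation for $\widehat\Sigma_{\mathfrak X}$ (a countable intersection over $k$ of finite unions over words of closed cylinder conditions) is sound, though not required by the statement.
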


\begin{remark}
Under the right encoding, the sequence $( \dots , b_{-3}, b_{-2}, b_{-1}, \boxed{a_0}, a_1, a_2, a_3,\dots)$ defines the dynamics of a local IFS in the following way: given $x_0=\pi_{\mathfrak X}(\dots , b_{-3}, b_{-2}, b_{-1}) \in A_{\mathfrak X}$ then one can obtain the next iterates as follows $x_1= f_{a_0}(x_0)$, $x_2= f_{a_1}(x_1)$, and so on. In this way, we form an orbit $((x_0,a_0), (x_1,a_1),\ldots)$ equivalent to $$( \dots , b_{-3}, b_{-2}, b_{-1}, \boxed{a_0}, a_1, a_2, a_3,\dots).$$ In the next, we will shift to this representation.    
\end{remark}

The existence of infinite orbits may be a problem even for points in the local attractor
(cf. Example~\ref{ex:shift}). 
However, the converse holds as once one have a non-empty local attractor $A_{\mathfrak X}$ one can always find at least one infinite orbit (see Proposition~\ref{prop:keyorbit}).

 \medskip
Some comments are in order. 
The abstract constrained space obtained in Theorem~\ref{thmC} is due to the fact that we are working in great generality, not requiring the open set condition (hence points may have multiple codes) nor that all transitions are admissible (e.g. in case the domains of $f_i$ are not the whole space). 

The contractivity assumption in Theorem~\ref{thmC} is only used  
to show that $\pi_{\mathfrak X}$ is a function (in general it would be a set valued function). 
  
Furthermore, Theorem~\ref{thmC} applies to the local attractor of all contractive local IFSs satisfying any of the assumptions $f_i(X_i)\subset X_i$ for every $1\le i \le n$ or
    $ X=\bigcup_{i=1}^n X_i$ that have been previously considered in the literature. 
 
\begin{maintheorem}\label{thm:D}
Let $R_{\mathfrak X}=(X_j,f_j)_{1\le j \le n}$ be a contractive local IFS with  local attractor $A_{\mathfrak X}$.
   	There exists a subset
    $\Omega^{e} (\mathfrak X)
    \subset (X \times \{0,1,\ldots,n\})^{\mathbb{Z}}$ 
    such that the following hold:
	\begin{enumerate}
    \item   $\pi_{\Sigma^{-}} \Big(\Omega^{e} (\mathfrak X)\Big) = \Sigma^{-}_{\mathfrak X} $; 
		\item  $\pi_{0,X} (\Omega^{e}(\mathfrak X)) =  A_{\mathfrak X}$, in particular, $\Omega^{e} (\mathfrak X) \neq \emptyset$;
		\item  The next  diagram commutes\\	
		\begin{center}
			\begin{tabular}{ccc}
				$\Omega^{e}(\mathfrak X) $ & $\stackrel{\sigma^{-1}}{ \longrightarrow }$ & $\Omega^{e}(\mathfrak X)$ \\
				$  \pi_{-1,X} \times \pi_{\Sigma^{-}}  \downarrow$ &   & $\downarrow \pi_{1,X} \times \pi_{\Sigma^{-}}$ \\
				$ X \times \Sigma^{-} $             & $\stackrel{S}{\longrightarrow } $ &    $X \times \Sigma^{-} $ \\
			\end{tabular}
		\end{center}
		where $S(x, \underline b) = ( f_{b_{-1}}(x), \sigma^-(\underline b))$. 
       
	\end{enumerate} 
\end{maintheorem}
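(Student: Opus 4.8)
\emph{Proof proposal.} The plan is to realize $\Omega^{e}(\mathfrak X)$ as the space of bi-infinite decorated orbits whose past is a genuine admissible code and whose future is a forward orbit that is allowed to terminate at an endpoint. Concretely, I would set $\Omega^{e}(\mathfrak X)$ to be the set of sequences $\omega=((x_i,a_i))_{i\in\mathbb Z}\in (X\times\{0,1,\dots,n\})^{\mathbb Z}$ such that: (i) for every $i<0$ one has $a_i\in\{1,\dots,n\}$, $x_i\in X_{a_i}$ and $x_{i+1}=f_{a_i}(x_i)$, while the left-infinite word $(\dots,a_{-2},a_{-1})$ lies in $\Sigma^-_{\mathfrak X}$; (ii) for $i\ge 0$, adopting the convention $f_0=\mathrm{id}$ and that the symbol $0$ is absorbing (if $a_i=0$ then $a_j=0$ and $x_j=x_i$ for all $j\ge i$), one still has $x_{i+1}=f_{a_i}(x_i)$ and $x_i\in X_{a_i}$ whenever $a_i\ne 0$. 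The absorbing symbol $0$ is precisely the device that lets a finite forward orbit (an endpoint of $A_{\mathfrak X}$) be padded into a bi-infinite sequence, which is what distinguishes $A_{\mathfrak X}$ from the infinite-orbit part $A^\infty_{\mathfrak X}$ of Theorem~\ref{thmC}.

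For the two surjectivity statements I would exploit Theorem~\ref{thm:classification}. Condition (i) together with the semiconjugacy relation $f_j\circ\pi_{\mathfrak X}=\pi_{\mathfrak X}\circ\tau_j$ (valid on $B_j$) forces $x_{-k}=\pi_{\mathfrak X}\big((\sigma^-)^k\underline b\big)$ for $\underline b=(\dots,a_{-2},a_{-1})$ and $k\ge 0$; in particular $x_0=\pi_{\mathfrak X}(\underline b)\in A_{\mathfrak X}$, yielding the inclusions $\pi_{\Sigma^-}(\Omega^{e}(\mathfrak X))\subseteq\Sigma^-_{\mathfrak X}$ and $\pi_{0,X}(\Omega^{e}(\mathfrak X))\subseteq A_{\mathfrak X}$. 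For the reverse inclusions, given any $\underline b\in\Sigma^-_{\mathfrak X}$ I would build the past by setting $x_{-k}=\pi_{\mathfrak X}((\sigma^-)^k\underline b)$ (well defined since $\Sigma^-_{\mathfrak X}$ is $\sigma^-$-invariant, so each $(\sigma^-)^k\underline b$ still lies in $\Sigma^-_{\mathfrak X}$) and $a_{-k}=b_{-k}$, and then extend forward from $x_0=\pi_{\mathfrak X}(\underline b)$ by choosing, at each step, some $a_i$ with $x_i\in X_{a_i}$ and $x_{i+1}=f_{a_i}(x_i)$ while this is possible, switching to the absorbing symbol $0$ once $x_i\notin\bigcup_j X_j$. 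This produces an element of $\Omega^{e}(\mathfrak X)$ projecting to $\underline b$ and with index-$0$ coordinate equal to $\pi_{\mathfrak X}(\underline b)$. As $\pi_{\mathfrak X}$ is surjective onto $A_{\mathfrak X}$, this simultaneously gives $\pi_{\Sigma^-}(\Omega^{e}(\mathfrak X))=\Sigma^-_{\mathfrak X}$ and $\pi_{0,X}(\Omega^{e}(\mathfrak X))=A_{\mathfrak X}$; non-emptiness follows because $A_{\mathfrak X}\ne\emptyset$.

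The commuting diagram I would verify by unwinding the definitions, first observing that $\sigma^{-1}$ maps $\Omega^{e}(\mathfrak X)$ into itself: shifting right replaces the past word $\underline b$ by $\sigma^-\underline b\in\Sigma^-_{\mathfrak X}$ and places the admissible symbol $a_{-1}\ne 0$ at the new origin, so conditions (i)--(ii) are preserved. For $\omega=((x_i,a_i))_{i}$ with $\underline b=\pi_{\Sigma^-}(\omega)$ one has $b_{-1}=a_{-1}$, and the composition $S\circ(\pi_{-1,X}\times\pi_{\Sigma^-})$ sends $\omega$ to $S(x_{-1},\underline b)=(f_{a_{-1}}(x_{-1}),\sigma^-\underline b)=(x_0,\sigma^-\underline b)$, using the past orbit relation $x_0=f_{a_{-1}}(x_{-1})$ from (i); on the other hand, since $(\sigma^{-1}\omega)_i=\omega_{i-1}$, the composition $(\pi_{1,X}\times\pi_{\Sigma^-})\circ\sigma^{-1}$ sends $\omega$ to $(x_0,\sigma^-\underline b)$ as well, so the two routes agree.

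The main obstacle is not the diagram, which is bookkeeping, but pinning down the definition of $\Omega^{e}(\mathfrak X)$ so that all three properties hold at once; in particular, verifying that the backward extension $x_{-k}=\pi_{\mathfrak X}((\sigma^-)^k\underline b)$ is a genuine admissible orbit, i.e. that $\underline b\in\Sigma^-_{\mathfrak X}$ already guarantees $\sigma^-\underline b\in B_{b_{-1}}=\pi_{\mathfrak X}^{-1}(X_{b_{-1}})$, so that Theorem~\ref{thm:classification}(2) applies at each step. This admissibility of the last transition is a structural feature of the code space built in Section~\ref{sec:codespace}, and I would invoke it explicitly. The second delicate point is the correct treatment of endpoints through the absorbing symbol $0$ and the convention $f_0=\mathrm{id}$, which is exactly what allows $\pi_{0,X}(\Omega^{e}(\mathfrak X))$ to recover the full attractor $A_{\mathfrak X}$ rather than only its infinite-orbit part $A^\infty_{\mathfrak X}$.
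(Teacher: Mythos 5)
Your proposal is correct and follows essentially the same route as the paper: an extended alphabet with an absorbing symbol $0$ (with $f_0=\mathrm{id}$) pads finite orbits at endpoints into bi-infinite sequences, the past is the forced sequence $x_{-k}=\pi_{\mathfrak X}\big((\sigma^-)^k\underline b\big)$ over a code $\underline b\in\Sigma^-_{\mathfrak X}$, and the diagram is checked by exactly the same bookkeeping, so all three assertions hold. The only deviations are harmless and, if anything, more careful than the paper's own argument: your admissible set is slightly larger than the paper's $\Omega^{e}(\mathfrak X)$ (you permit the symbol $0$ to absorb before a genuine endpoint, whereas the paper's $k$-admissibility requires $a_i\in\Psi^{e}(x_i)$, so $0$ occurs only off $\bigcup_j X_j$ — irrelevant since the theorem is an existence statement), and you invoke explicitly the fact that $\underline b\in\Sigma^-_{\mathfrak X}$ forces $\pi_{\mathfrak X}(\sigma^-\underline b)\in X_{b_{-1}}$, which the paper uses only implicitly in the proof of Proposition~\ref{prop: properties code map}.
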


The precise description of the extended shift space $\Omega^{e}(\mathfrak X)$
is defined by the relations \eqref{eq:defOmegae} and 
\eqref{eq:defOmegaeu}.
  
In the special case that the maps $f_i$ that define the local IFS are invertible, Theorem~\ref{thm:D} admits a simpler formulation, similar to the one of Theorem~\ref{thmC}, whose proof is left as a simple exercise to the reader.

\begin{maincorollary}\label{cor: thmD}
    
    Let $R_{\mathfrak X}=(X_j,f_j)_{1\le j \le n}$ be a contractive local IFS with local attractor $A_{\mathfrak X}$ and let 
    $\Omega^{e} (\mathfrak X)
    \subset (X \times \{0,1,\ldots,n\})^{\mathbb{Z}}$ be given by Theorem~\ref{thm:D}. 
      
    If all maps $f_j: X_j \to f_j(X_j)$ are invertible then the following diagram commutes:			
        \begin{center}
			\begin{tabular}{ccc}
				$\Omega^{e}(\mathfrak X) $ & $\stackrel{\sigma^{-1}}{ \longrightarrow }$ & $\Omega^{e}(\mathfrak X)$ \\
				$  \pi_{0,X} \times \pi_{\Sigma^{+}}  \downarrow$ &   & $\downarrow \pi_{0,X} \times \pi_{\Sigma^{+}}$ \\
				$ X \times \Sigma^{+} $             & $\stackrel{\hat S}{\longrightarrow } $ &    $X \times \Sigma^{+} $ \\
			\end{tabular}
		\end{center}
		where $\hat S (x, \underline a) = ( f_{b_{-1}}^{-1}(x), \tau_{b_{-1}}(\underline a))$ and $\tau_{b_{-1}}(\underline a) =(b_{-1}, a_0, a_1,\ldots)$.
\end{maincorollary}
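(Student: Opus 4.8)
The plan is to treat Corollary~\ref{cor: thmD} as a direct reformulation of the commuting square of Theorem~\ref{thm:D}, now reading the dynamics off the zeroth coordinate and the one-sided space $\Sigma^{+}$ instead of positions $-1,1$ and $\Sigma^{-}$, and replacing the forward map $f_{b_{-1}}$ by the inverse $f_{b_{-1}}^{-1}$ that becomes available once each $f_j\colon X_j\to f_j(X_j)$ is invertible. Since the source system $\sigma^{-1}$ on $\Omega^{e}(\mathfrak X)$ and all the projections are already constructed in Theorem~\ref{thm:D}, I would simply fix an arbitrary $\omega=((x_i,a_i))_{i\in\mathbb{Z}}\in\Omega^{e}(\mathfrak X)$, evaluate both ways around the square, and check that they agree.

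Carrying this out, going down first gives $(\pi_{0,X}\times\pi_{\Sigma^{+}})(\omega)=(x_0,(a_0,a_1,a_2,\dots))$. Going across first, $\sigma^{-1}$ sends the pair at position $i-1$ to position $i$, so $\pi_{0,X}(\sigma^{-1}\omega)=x_{-1}$ and $\pi_{\Sigma^{+}}(\sigma^{-1}\omega)=(a_{-1},a_0,a_1,\dots)$. Applying $\hat S$ to $(x_0,(a_0,a_1,\dots))$, with $b_{-1}$ read as the symbol $a_{-1}$ that $\tau_{a_{-1}}$ prepends, yields $(f_{a_{-1}}^{-1}(x_0),(a_{-1},a_0,a_1,\dots))$. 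The symbolic coordinates coincide by inspection, so the whole content of the diagram reduces to the single identity $x_{-1}=f_{a_{-1}}^{-1}(x_0)$.

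This identity is exactly where the invertibility hypothesis enters, and is the only substantive point. The defining relations of $\Omega^{e}(\mathfrak X)$ (equations \eqref{eq:defOmegae}--\eqref{eq:defOmegaeu}) impose the cocycle constraint $x_0=f_{a_{-1}}(x_{-1})$ with $x_{-1}\in X_{a_{-1}}$; hence $x_0$ lies in the range $f_{a_{-1}}(X_{a_{-1}})$, the inverse $f_{a_{-1}}^{-1}$ is defined at $x_0$, and it returns $x_{-1}$. Thus the square commutes on $\omega$, and since $\omega$ is arbitrary the diagram commutes.

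I expect the only genuine points of care — and hence the reason this is phrased as an exercise rather than a theorem — to be two bookkeeping matters rather than any hard estimate. First, $\hat S$ must be understood at the level of $\omega$: in the absence of the open set condition the prepended symbol $b_{-1}=a_{-1}$ need not be a function of $(x_0,\underline a)$ alone, so the commutativity is asserted along lifts from $\Omega^{e}(\mathfrak X)$ rather than as an honest self-map of $X\times\Sigma^{+}$. Second, one must confirm that the auxiliary symbol $0$ of the alphabet $\{0,1,\dots,n\}$ in Theorem~\ref{thm:D} does not obstruct the inversion; on the orbits contributing infinite backward itineraries the relevant $a_{-1}$ lies in $\{1,\dots,n\}$, so $f_{a_{-1}}$ is a genuine invertible contraction and the inversion step above is legitimate.
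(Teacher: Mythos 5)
Your proof is correct and is exactly the intended argument: the paper offers no written proof of this corollary (it is explicitly ``left as a simple exercise to the reader''), and your diagram chase mirrors the verification of item (3) in the proof of Theorem~\ref{thm:D}, reducing everything to inverting the cocycle relation $x_0=f_{a_{-1}}(x_{-1})$, $x_{-1}\in X_{a_{-1}}$, which the definition of $\Omega^{e}(\mathfrak X)$ via $k$-admissibility for all $k\ge 1$ supplies. Your two caveats are also accurate and worth having on record: without the open set condition $\hat S$ is only defined along lifts from $\Omega^{e}(\mathfrak X)$ (the prepended symbol $b_{-1}$ is not a function of $(x_0,\underline a)$ alone), and the admissibility conditions propagate backwards to force every negative-position symbol into $\{1,\dots,n\}$ (a symbol $0$ at position $-j$ would freeze the orbit and contradict $a_{-1}\neq 0$), so $f_{a_{-1}}^{-1}$ is legitimately applied at $x_0\in f_{a_{-1}}(X_{a_{-1}})$.
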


\medskip

In the context of a contractive local IFS $R_{\mathfrak X}$, one would aim at proving exponential convergence of sets in the basin of $A_{\mathfrak X}$ to the local attractor as a complement to the topological description proved in Theorem~\ref{thm:topological}. This however turns out not to be true in general, a fact that led us to introduce the following notion of essential part in the iterates of the Hutchinson-Barnsley operator.
For each compact set $K\subset X$, $k\ge 1$  we write 
$$
F_{\mathfrak X}^k(K) = \bigcup_{\underline b \in \Sigma^-} V_{[\underline b]_k} 
\quad\text{where}\quad
V_{[\underline b]_k} 
 =  f_{b_{-1}} \circ f_{b_{-2}} \circ \dots f_{b_{-k}}(X_{b_{-k}}),   
$$ 
where $ 
[\underline b]_k:= (b_{-k}, \dots, b_{-2}, b_{-1})
$ 
for each $\underline b\in \Sigma^-$ and $k\ge 1$
and $f_{b_j}(A)=f_{b_j}(A\cap X_{b_j})$ for every integer $-k \le j \le -1$ (in order not to overload notation we shall simply write $f_j$ instead of $\bar f_j$ throughout).

    \begin{definition}
    The \emph{essential part of $F_{\mathfrak X}^k(K) $}, denoted by $\rm{ess}(F_{\mathfrak X}^k(K))$, is defined by 
    $$
    \rm{ess}(F_{\mathfrak X}^k(K))=\bigcup V_{[\underline b]_k}
     \in K^*(X),
    $$
    with the union 
    taken over all sequences $\underline b\in \Sigma_{\mathfrak X}.$
     
    \end{definition}

    Roughly speaking, the essential part consists of cylinder sets that will never disappear in the construction of the local attractor. More precisely, if
    $     V_{[\underline b]_k}
    \subset \rm{ess}(F_{\mathfrak X}^k(K))$  
     then $V_{[\underline b]_n}\neq \emptyset$ for every $n\ge k$. 
In the special case of IFSs it is clear that 
     $\rm{ess}(F_{\mathfrak X}^k(K))=F_{\mathfrak X}^k(K)$ for every $k\ge 1$.
 
 We are now in a position to state the final result of this paper, namely that the essential part of the iterations under the Hutchinson-Barnsley attractor converges exponentially fast to the local attractor.

\begin{maintheorem}\label{thmB}
Assume that $R_{\mathfrak X}=(X_j, f_{j})_{1\le j \le n}$ is a contractive local IFS and suppose that $A_{\mathfrak X} \neq \emptyset$. The following properties hold:
    \begin{enumerate}
        \item 
   $$A_{\mathfrak X}=\bigcap_{i \geq 0} F_{\mathfrak X}^{i}(A_{R}) = \lim_{i \to \infty}  F_{\mathfrak X}^{i}(A_{R});$$
           \item 
    there exists $\lambda\in (0,1)$ so that, for any  $A_0 \in \mathcal{B}_{\mathfrak X}$ and any $i\ge 1$,
	$$\text{dist}_H(\rm{ess}(F_{\mathfrak X}^i(A_0)), A_{\mathfrak X}) \leq \lambda^i \text{diam}(X).$$
   where $\text{dist}_H(\cdot,\cdot)$ stands for the Hausdorff distance.
         
    \end{enumerate}
\end{maintheorem}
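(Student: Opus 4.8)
The plan is to treat the two items separately, deriving item (1) almost directly from Theorem~\ref{thm:topological} and then building item (2) on the contraction estimate for cylinders.

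For item (1), I would apply Theorem~\ref{thm:topological}(1) with the particular choice $A_0=A_R$. This requires checking three things: that $A_R\supseteq A_{\mathfrak X}$, which is the chain \eqref{eq:apresentacaoA}; that $A_R$ is compact, which holds since it is the attractor of a contractive (global) IFS; and that $A_R$ is $F_{\mathfrak X}$-positively invariant. The last point is immediate from the pointwise inclusion $F_{\mathfrak X}(B)\subseteq \bigcup_j f_j(B)=F_R(B)$, valid for every $B$, so that $F_{\mathfrak X}(A_R)\subseteq F_R(A_R)=A_R$. Theorem~\ref{thm:topological}(1) then yields $A_{\mathfrak X}=\bigcap_{i\ge0}F_{\mathfrak X}^i(A_R)$. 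Since $F_{\mathfrak X}$ is monotone and $F_{\mathfrak X}(A_R)\subseteq A_R$, the sequence $(F_{\mathfrak X}^i(A_R))_i$ is a nested decreasing family of nonempty compacts, hence it converges in the Hausdorff-Pompeiu distance to its intersection; this identifies the limit with $A_{\mathfrak X}$.

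For item (2), set $\lambda=\max_{1\le j\le n}\lambda_j\in(0,1)$. The basic estimate is that the composition $f_{b_{-1}}\circ\cdots\circ f_{b_{-i}}$ is $\prod_{\ell=1}^{i}\lambda_{b_{-\ell}}\le\lambda^i$-Lipschitz, whence $\text{diam}(V_{[\underline b]_i})\le\lambda^i\,\text{diam}(X)$ for every word. Moreover, for an admissible $\underline b\in\Sigma^-_{\mathfrak X}$ the cylinders $V_{[\underline b]_k}$ form a nested decreasing family of nonempty closed sets whose diameters tend to $0$, so they shrink to the single point $\pi_{\mathfrak X}(\underline b)\in A_{\mathfrak X}$; in particular $\pi_{\mathfrak X}(\underline b)\in V_{[\underline b]_i}$ for every $i$. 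With these facts the Hausdorff bound splits into two one-sided estimates. For the first direction, every $y$ in the essential part lies in some $V_{[\underline b]_i}$ with $\underline b\in\Sigma^-_{\mathfrak X}$ admissible; since $\pi_{\mathfrak X}(\underline b)\in A_{\mathfrak X}$ belongs to the same cylinder, $d(y,A_{\mathfrak X})\le\text{diam}(V_{[\underline b]_i})\le\lambda^i\,\text{diam}(X)$. For the reverse direction, any $x\in A_{\mathfrak X}$ equals $\pi_{\mathfrak X}(\underline b)$ for some admissible $\underline b$ by the surjectivity in Theorem~\ref{thm:classification}; picking any point $y$ of the essential cylinder indexed by this $\underline b$, both $x$ and $y$ lie in $V_{[\underline b]_i}$, so $d(x,\text{ess})\le\lambda^i\,\text{diam}(X)$. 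Combining the two directions gives the asserted bound.

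The main obstacle is this reverse estimate, specifically guaranteeing that the essential cylinder indexed by an admissible $\underline b$ is nonempty when constructed from a general basin set $A_0\in\mathcal{B}_{\mathfrak X}$ rather than from $X$ or $A_R$. When $A_0\supseteq A_{\mathfrak X}$ the argument is clean: using the $\sigma^-$-invariance of $\Sigma^-_{\mathfrak X}$ together with the semiconjugacy of Theorem~\ref{thm:classification}, the point $\pi_{\mathfrak X}((\sigma^-)^i\underline b)$ lies in $A_{\mathfrak X}\cap X_{b_{-i}}$ and is carried by $f_{b_{-1}}\circ\cdots\circ f_{b_{-i}}$ exactly onto $x=\pi_{\mathfrak X}(\underline b)$, so the essential cylinder even contains $x$ and the reverse distance is $0$. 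For a general $A_0$ one must instead combine the basin property with the defining feature of the essential part, namely that its cylinders are precisely the admissible words that never disappear under iteration, to secure nonemptiness uniformly in $i$. This is exactly the step where the restriction to the essential part is indispensable: the full iterate $F_{\mathfrak X}^i(A_0)$ also contains transient non-admissible cylinders that may sit far from $A_{\mathfrak X}$ and vanish only at uncontrolled times, which is why no exponential rate can be expected for $F_{\mathfrak X}^i(A_0)$ itself.
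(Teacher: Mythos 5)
Your proposal is correct and, in substance, the same as the paper's. Item (1) is exactly the printed argument: Lemma~\ref{le:global and local attractors} gives $A_{\mathfrak X}\subseteq A_R$ (your appeal to \eqref{eq:apresentacaoA}), positive invariance follows from $F_{\mathfrak X}(B)\subseteq F(B)$ so that $F_{\mathfrak X}(A_R)\subseteq F(A_R)=A_R$, and Theorem~\ref{thm:topological}(1) plus nested-compact convergence in the Hausdorff--Pompeiu metric concludes (Corollary~\ref{cor: iterative attractor full attractor}). For item (2) the packaging differs slightly but not the idea: the paper proves the pairwise cylinder estimate $\text{dist}_H\bigl(V_{[\underline b]_k}(A_{\mathfrak X}),V_{[\underline b]_k}(A_0)\bigr)\le \lambda^k\,\text{diam}(X)$ for admissible $\underline b$ --- both sets being images of points of $A_{\mathfrak X}$, respectively $A_0$, under the same $k$-fold composition --- and then combines $A_{\mathfrak X}=\bigcup V_{[\underline b]_k}(A_{\mathfrak X})$ (union over nonempty cylinders) with the inequality $\text{dist}_H(\bigcup_m A_m,\bigcup_m B_m)\le\max_m \text{dist}_H(A_m,B_m)$; your two one-sided bounds anchored at the coding point $\pi_{\mathfrak X}(\underline b)\in V_{[\underline b]_i}$ yield the same estimate and are marginally cleaner, since that point lies in every cylinder of its word simultaneously.

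The obstacle you flag at the end deserves a remark: you are right that your reverse estimate requires the essential cylinders $V_{[\underline b]_i}(A_0)$ to be nonempty, which you only secure when $A_{\mathfrak X}\subseteq A_0$, and you leave the general case $A_0\in\mathcal{B}_{\mathfrak X}$ unresolved. But the paper's own proof of Proposition~\ref{prop: contrac of sequ loc attrac beta version} does not resolve it either: it opens with ``Since $A_{\mathfrak X}\subset A_0$\dots'', an inclusion that membership in $\mathcal{B}_{\mathfrak X}$ does not grant (for instance $A''_0=\{0,2\}$ in Example~\ref{ex:differnt-basins} belongs to $\mathcal{B}_{\mathfrak X}$ but does not contain $A_{\mathfrak X}=\mathcal{C}\cup\{2\}$). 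So your argument covers precisely the same cases as the printed one, and your explicit identification of the missing step is a point in your favor rather than a gap relative to the paper.
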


\begin{remark}
A fundamental property of a global IFS $R$ is that its unique fractal attractor $A_{R} \in K^*(X)$ verifies $F(A_{R})=A_{R}$ and $\lim_{k \to \infty}  F^{k}(A)=A_{R}$ for any $A \in K^*(X)$ (see e.g. \cite{Hut81,Bar86}). In particular this ensures that the global attractor can be approximated by the iteration of the Hutchinson-Barnsley operator from any initial compact set, including singletons. We notice that Theorem~\ref{thmB} is optimal, as the latter fundamental property may fail, 
 as illustrated by Example~\ref{ex:differnt-basins}.    
  
\end{remark}

It is worth mentioning that the constant $\lambda$ appearing in item (2) above is $\lambda=\max_{1\le j \le n} \lambda_j$, where $\lambda_j$ stands for the contraction constant of each map $f_j$. 

\medskip
To finish this section, in order to illustrate the wide range of applications of local IFSs we show that the latter encloses the one of graph-directed IFS as specific examples, in the sense that every graph-directed iterated function system can be realized as a local IFS.
Let us recall the concept of 
graph-directed IFS and set the notation.

Let $G=(V, E, i,t)$ be a finite directed graph, where $V=\{v_1,\dots,v_m\}$ is the set of vertices, $E \subset V^2$ is the set of oriented edges, the functions $i: E \to V$ and $t: E \to V$ represent the initial and the final vertices of an edge, respectively.
In other words, $e=(v_i,v_j) \in E$ if there exists a directed edge from $v_i$ to $v_j$, in which case 
$i((v_i,v_j)) = v_i \text{ and } t((v_i,v_j)) = v_j.$  

 A \emph{graph-directed IFS} $(G, (X_v)_{v\in V}, (f_e)_{e \in E})$ on a compact metric space $(X,d)$ consists of a finite directed graph $G=(V, E, i,t)$, a family of compact subsets $(X_v)_{v\in V}$ of $X$ and a family of contractions  
$f_e : X_{t(e)} \to X_{i(e)},   e\in E.$ 
 
To a graph-directed IFS as above one can associate the space
\begin{equation}
    \label{eqdefK}
\mathcal{K}:=\Big\{(B_v)_{v\in V} \; | \; B_v \subseteq K^*(X_v),\; \forall v \in V
\Big\}
\end{equation} 
and consider the fractal operator $F_G: \mathcal{K} \to \mathcal{K}$ given by 
\[
(F_G((B_v)_{v\in V}))_u:= \bigcup_{e\in E, \; i(e)=u} f_e(B_{t(e)}),
\]
for each $u\in V$ and  $B=(B_v)_{v\in V} \in \mathcal{K}$.
An element $A:=(A_v)_{v\in V} \in \mathcal{K}$ satisfying $F_G(A)=A.$ is called an invariant set for the graph-directed IFS $(G, (X_v)_{v\in V}, (f_e)_{e \in E})$.
It is a known that  every contractive graph-directed IFS has a unique invariant set, which is attractive with respect to the operator $F_G$, hence an attractor, satisfying
\begin{equation}
    \label{eq:GDIFSinv}
    A_v= \bigcup_{e\in E, \; i(e)=v} f_e(A_{t(e)}).
    \end{equation}
(cf. \cite[Proposition 3.4]{Verma} and references therein).

We will say that a metric space $Y$ is an enriched version of the metric space $X$ if there exists an isometric embedding $X \to Y$.
Let $(X,d)$ be a compact metric space and let
$(G, (X_v)_{v\in V}, (f_e)_{e \in E})$ 
be a contractive graph-directed IFS on $X$.
Consider the space 
$Y := X \times V$, 
equipped with the metric
$$
d_Y\bigl((x,v),(y,w)\bigr) = d(x,y) + \mathbf{1}_{\{v\neq w\}},
$$
where $\mathbf{1}_{\{v\neq w\}}=1$ if $v\neq w$ and it is equal to $0$ otherwise.
Since $V$ is finite, it follows that $(Y,d_Y)$ is a compact metric space.

We proceed to define a suitable contractive local IFS on $Y$.
For each $e\in E$, consider 
the map
\[
g_e : Y \to Y
\quad\text{given by}\quad
g_e(x,v) = \bigl(f_e(x), i(e)\bigr)
\]
It is straightforward to show that each map $g_e$ is a contraction on $Y$: as $f_e$ is a contraction with Lipschitz constant $0\le \lambda_e<1$, then for any
$(x,v),(y,w)\in D_e$,
\[
d_Y\bigl(g_e(x,v),g_e(y,w)\bigr)
= d\bigl(f_e(x),f_e(y)\bigr)
\le \lambda_e\, d(x,y)
\le \lambda_e\, d_Y\bigl((x,v),(y,w)\bigr).
\]
Now, for each $e\in E$, take the set $D_e = X_{t(e)} \times \{t(e)\} \subset Y$.

Consider the contractive local IFS $R_{D}=(D_e,g_e)_{e\in V}$, and denote by $B\subset Y$ its local attractor. In particular
\begin{equation}
\label{eq:attractB}
B=\bigcup_{e\in E} g_e(B\cap D_e).    
\end{equation}
Since $B\subset Y$ it induces an element $(B_v)_{v\in V}$ in $\mathcal{K}$ (recall 
~\eqref{eqdefK}) by
$$
B_v = \pi_1(B \cap (X \times \{v\}))
\quad\text{for each $v\in V$.}
$$

We claim that $(B_v)_{v\in V}$ is a fixed point for the fractal operator $F_G$. In fact, from \eqref{eq:attractB} and the fact that $D_e\subset X \times \{t(e)\}$, $f_e(X_{t(e)})\subset X_{i(e)}$ and
$g_e(B\cap D_e)\subset X_{i(e)}\times \{i(e)\}$, one deduces that 
\begin{equation*}
\label{eq:attractB2}
B \subset \bigcup_{e\in E} X_{i(e)} \times \{i(e)\}    
\end{equation*}
hence
$$
B_v = \pi_1(B \cap (X \times \{v\})
= \pi_1 \Big( B \cap (X_v \times \{v\})\Big)
$$
for each $v\in V$. Moreover,
\begin{align*}
B & =\bigcup_{e\in E} g_e(B\cap D_e) 
=
\bigcup_{v\in V}
\left(
\bigcup_{e:\, i(e)=v} g_e(B\cap D_e)
\right)
& = 
\bigcup_{v\in V}
\left(
\bigcup_{e:\, i(e)=v} f_e( \pi_1(B\cap D_e))
\right)
\times\{v\}
\end{align*}
where
$\pi_1: Y \to X$ denotes the projection on the first coordinate. Since 
$ 
B=\bigcup_{v\in V} B_v \times \{v\}
$ 
we conclude that 
$$
B_v = \bigcup_{e:\, i(e)=v} f_e( \pi_1(B\cap D_e))
= \bigcup_{e:\, i(e)=v} f_e( B_{t(e)})
$$
which proves ~\eqref{eq:GDIFSinv}.
Since the fractal operator $F_G$ has a unique fixed point we deduce that $(B_v)_{v\in V}$ is the attractor for the contractive graph-directed 
IFS.
Altogether, this proves the following theorem.

\begin{maintheorem}\label{thm:graphdirected}
Every attractor of a contractive graph-directed IFS 
$(G, (X_v)_{v\in V}, (f_e)_{e \in E})$ on a compact metric space $(X,d)$ is obtained from a contractive local IFS on a compact subset of an enriched compact metric space $Y$.
\end{maintheorem}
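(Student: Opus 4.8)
The plan is to encode the vertex data of the graph-directed system directly into the phase space, so that a single local attractor on an enlarged space simultaneously records all of the vertex-components $A_v$. Concretely, I would pass to the enriched space $Y=X\times V$ in which every point carries a label recording the vertex it belongs to, and put on $Y$ a metric that adds a fixed penalty $\mathbf 1_{\{v\neq w\}}$ for disagreeing labels. Since $V$ is finite and $X$ is compact, $Y$ is automatically compact, and it decomposes into the finitely many clopen slices $X\times\{v\}$; this decomposition is precisely what will let the different vertex-components be read off by projection.

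First I would lift each edge map $f_e\colon X_{t(e)}\to X_{i(e)}$ to a map $g_e\colon Y\to Y$, $g_e(x,v)=(f_e(x),i(e))$, whose image lies entirely in the slice over $i(e)$, and I would restrict its domain to $D_e=X_{t(e)}\times\{t(e)\}$. The key computation is that $g_e$ remains a contraction with the same constant $\lambda_e$: points in the image always share the label $i(e)$, so the indicator term of $d_Y$ never contributes, and $d_Y(g_e(x,v),g_e(y,w))=d(f_e(x),f_e(y))\le\lambda_e\,d(x,y)\le\lambda_e\,d_Y((x,v),(y,w))$. This produces a genuine contractive local IFS $R_D=(D_e,g_e)_{e\in E}$ on $Y$, to which Theorem~\ref{thm:topological} applies, furnishing a compact local attractor $B\subset Y$ satisfying $B=\bigcup_{e\in E}g_e(B\cap D_e)$.

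The heart of the argument is to show that the slice-projections $B_v=\pi_1\bigl(B\cap(X\times\{v\})\bigr)$ solve the graph-directed fixed-point equation \eqref{eq:GDIFSinv}. Here I would exploit the label bookkeeping: because every $g_e$ outputs the label $i(e)$, the invariance $B=\bigcup_e g_e(B\cap D_e)$ forces $B\subset\bigcup_{e}X_{i(e)}\times\{i(e)\}$, so each slice of $B$ is fed exactly by those edges pointing into $v$. Grouping the union over edges by their target vertex and projecting onto the first coordinate then yields $B_v=\bigcup_{e\colon i(e)=v}f_e(B_{t(e)})$, which is \eqref{eq:GDIFSinv}. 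Finally, since the graph-directed operator $F_G$ has a unique fixed point, $(B_v)_{v\in V}$ must coincide with the attractor of the graph-directed IFS, completing the realization.

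The step I expect to be most delicate is not any single estimate but the bookkeeping in the previous paragraph: one must verify that the restriction domains $D_e$ select precisely the slice over $t(e)$ and that the labels emitted by the maps $g_e$ partition $B$ cleanly among the slices, so that the disjoint-union structure of $Y$ converts the single local-attractor equation into the coupled system \eqref{eq:GDIFSinv} without cross-contamination between vertices. The indicator penalty in $d_Y$ is exactly what guarantees this separation, since it keeps the images $g_e(Y)$ lying over distinct target vertices genuinely apart.
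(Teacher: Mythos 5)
Your proposal is correct and follows essentially the same route as the paper's own proof: the enriched space $Y=X\times V$ with the indicator-penalty metric, the lifted contractions $g_e(x,v)=(f_e(x),i(e))$ restricted to $D_e=X_{t(e)}\times\{t(e)\}$, the slice-projections $B_v=\pi_1\bigl(B\cap(X\times\{v\})\bigr)$, the verification of the system \eqref{eq:GDIFSinv}, and the appeal to uniqueness of the fixed point of $F_G$ all coincide with the paper's argument. The only blemish is terminological — you say ``grouping by their target vertex'' where the grouping is by the initial vertex $i(e)=v$ in the paper's convention — but your displayed formula is the correct one, so the mathematics is unaffected.
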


\begin{remark}
    \label{rmk:comparisongraph}
    It is clear that there exist local IFSs which are not graph-directed. In fact, 
    it is enough for the attractor of the local IFS to have endpoints or for the images of the domains not to satisfy the systems of set equations ~\eqref{eq:GDIFSinv}.
\end{remark}

\section{Existence of a local attractor and its topological basin}
\label{sec:proofs1}
In this section we prove results on local IFS generated by continuous maps, stated in Theorem~\ref{thm:topological}. The first results guarantees the existence of a maximal invariant set.

\begin{proposition}\label{prop: set property local attract}
  Let $R_{\mathfrak X}=(X_j, f_{j})_{1\le j \le n}$ be a local IFS. The set $(\operatorname{Fix}(F_{\mathfrak X}), \subseteq)$ is a non-empty and direct set. In particular, there exists a unique maximal invariant set $A_{\mathfrak X}\subseteq \operatorname{Fix}(F_{\mathfrak X})$, and it satisfies $A_{\mathfrak X}= \bigcup_{B \in \operatorname{Fix}(F_{\mathfrak X})} B$.
\end{proposition}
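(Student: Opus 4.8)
The plan is to reduce the entire statement to a single structural observation: the local Hutchinson--Barnsley operator $F_{\mathfrak X}$ commutes with arbitrary set-theoretic unions. Concretely, I would first establish that for any family $(B_\alpha)_\alpha$ of subsets of $X$,
$$
F_{\mathfrak X}\Big(\bigcup_\alpha B_\alpha\Big) = \bigcup_\alpha F_{\mathfrak X}(B_\alpha).
$$
This is immediate from the definition $F_{\mathfrak X}(B) = \bigcup_{1\le j\le n} f_j(B \cap X_j)$ together with two elementary facts: intersection with a fixed set $X_j$ distributes over unions, i.e. $(\bigcup_\alpha B_\alpha)\cap X_j = \bigcup_\alpha (B_\alpha \cap X_j)$, and the set-image under any map distributes over unions, i.e. $f_j(\bigcup_\alpha S_\alpha) = \bigcup_\alpha f_j(S_\alpha)$. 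After interchanging the unions over $j$ and over $\alpha$ one obtains the displayed identity. I would emphasize that this uses neither continuity of the $f_j$, nor compactness of $X$, nor closedness of the $X_j$; it is purely set-theoretic, which is appropriate at this level of generality (those hypotheses enter only later, e.g. for the compactness assertions of Theorem~\ref{thm:topological}).

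With this lemma in hand, non-emptiness is trivial: a direct computation gives $F_{\mathfrak X}(\emptyset) = \bigcup_j f_j(\emptyset) = \emptyset$, so $\emptyset \in \operatorname{Fix}(F_{\mathfrak X})$. For directedness, given $B_1, B_2 \in \operatorname{Fix}(F_{\mathfrak X})$ I would take the candidate upper bound $C = B_1 \cup B_2$; the union identity yields $F_{\mathfrak X}(C) = F_{\mathfrak X}(B_1) \cup F_{\mathfrak X}(B_2) = B_1 \cup B_2 = C$, so $C \in \operatorname{Fix}(F_{\mathfrak X})$, while evidently $B_1 \subseteq C \supseteq B_2$. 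Hence any two fixed sets admit an upper bound inside $\operatorname{Fix}(F_{\mathfrak X})$, which is precisely the directedness assertion.

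Finally, to produce the maximal element I would set $A_{\mathfrak X} := \bigcup_{B \in \operatorname{Fix}(F_{\mathfrak X})} B$, a well-defined subset of $X$ since it is a union of subsets of $X$. Applying the union identity once more, now over the whole indexing family $\operatorname{Fix}(F_{\mathfrak X})$, gives $F_{\mathfrak X}(A_{\mathfrak X}) = \bigcup_{B} F_{\mathfrak X}(B) = \bigcup_B B = A_{\mathfrak X}$, so $A_{\mathfrak X}$ is itself a fixed set. By construction it contains every element of $\operatorname{Fix}(F_{\mathfrak X})$, hence it is the greatest—in particular the unique maximal—element, and the stated equality $A_{\mathfrak X}= \bigcup_{B \in \operatorname{Fix}(F_{\mathfrak X})} B$ holds by definition.

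There is no substantive obstacle here; the only point demanding a little care is to apply the union identity to the full (possibly very large) family $\operatorname{Fix}(F_{\mathfrak X})$ rather than merely to binary unions, so that the total union is genuinely shown to be a fixed point and not just an upper bound of each pair. Everything hinges on the commutation of $F_{\mathfrak X}$ with unions, and once that is recorded the three assertions each follow in a couple of lines.
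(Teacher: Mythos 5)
Your proof is correct, and it takes a genuinely different route from the paper's. The paper proves non-emptiness and directedness exactly as you do (your binary-union computation for $F_{\mathfrak X}(B_1\cup B_2)$ is essentially its directedness step), but it then produces a maximal element by applying Zorn's lemma to chains in $(\operatorname{Fix}(F_{\mathfrak X}),\subseteq)$, and only afterwards combines maximality with directedness to deduce uniqueness and the equality $A_{\mathfrak X}=\bigcup_{B\in\operatorname{Fix}(F_{\mathfrak X})}B$. You bypass Zorn entirely: by isolating the commutation of $F_{\mathfrak X}$ with \emph{arbitrary} unions as the single key lemma --- which is indeed purely set-theoretic, exactly as you stress, since images and intersections with a fixed set distribute over unions --- you may apply it to the full family $\operatorname{Fix}(F_{\mathfrak X})$ (a legitimate indexing set, being a subset of $2^X$ and non-empty because $\emptyset$ is fixed) and conclude directly that the total union is itself a fixed point, hence the \emph{greatest} element, from which unique maximality is immediate. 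Your route buys two things: it is choice-free, and it quietly repairs a small imprecision in the paper, whose Zorn argument only checks upper bounds for increasing sequences $A_1\subseteq A_2\subseteq\cdots$ rather than arbitrary chains (the same union identity handles those as well, but the paper does not say so). It also makes rigorous the paper's closing sentence, which asserts the equality $A_{\mathfrak X}=\bigcup_{B\in\operatorname{Fix}(F_{\mathfrak X})}B$ without explicitly verifying that this union is invariant; your final caveat --- that the identity must be applied to the whole family, not merely to pairs --- is precisely the point needing care there. The only thing the paper's longer route offers in exchange is the standard Zorn template, which generalizes to situations where the operator does not commute with infinite unions; here that generality is unnecessary.
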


\begin{proof}
It is evident that $(\operatorname{Fix}(F_{\mathfrak X}), \subseteq)$ is a partially ordered non-empty set. 
   Indeed, in order to see that $\operatorname{Fix}(F_{\mathfrak X}) \neq \emptyset$ we notice that $\emptyset \in  2^{X} $ and
   $$F_{\mathfrak X}(\emptyset)=\bigcup_{1\le j \le n} f_{j}(\emptyset \cap X_{j})=\emptyset$$
   thus $\emptyset \in \operatorname{Fix}(F_{\mathfrak X})$.
   Now, given a totally  ordered sequence 
   $A_1 \subseteq A_2 \subseteq A_3 \ldots$ in $\operatorname{Fix}(F_{\mathfrak X})$, we claim that the set $B=\bigcup_{i \geq 1} A_i$ belongs to $\operatorname{Fix}(F_{\mathfrak X})$ as
\[F_{\mathfrak X} \Big(\bigcup_{i \geq 1} A_i\Big) = \bigcup_{i \geq 1} F_{\mathfrak X} (A_i) = \bigcup_{i \geq 1} A_i\]
(because $A_i \in \operatorname{Fix}(F_{\mathfrak X})$ for all $i \geq 1$), hence
it is an upper bound for the sequence. By Zorn's lemma there exists at least one maximal element $A_{\mathfrak X}$ in $(\operatorname{Fix}(F_{\mathfrak X}), \subseteq)$.
   It remains to prove that the maximal element is unique. Notice that $(\operatorname{Fix}(F_{\mathfrak X}), \subseteq)$ is a direct set:  if $M,N \in \operatorname{Fix}(F_{\mathfrak X})$ then $B= M \cup N \in \operatorname{Fix}(F_{\mathfrak X})$ because
   \begin{align*}
       F_{\mathfrak X}(M \cup N) & =\bigcup_{1\le j \le n} f_{j}((M \cup N) \cap X_{j}) \\
       & =\bigcup_{1\le j \le n} f_{j}(M \cap X_{j}) \cup \bigcup_{1\le j \le n} f_{j}(N \cap X_{j}) = M \cup N.
   \end{align*}
   This implies the uniqueness of the local attractor $A_{\mathfrak X}$. Indeed, otherwise there would exist two distinct maximal sets $A, A'$ in $(\operatorname{Fix}(F_{\mathfrak X}), \subseteq)$ from which one would produce a strictly larger maximal element $A \cup A'$, reaching a contradiction. 
   Finally, we note that one necessarily has the equality $A_{\mathfrak X}= \bigcup_{B \in \operatorname{Fix}(F_{\mathfrak X})} B$, since the set $\bigcup_{B \in \operatorname{Fix}(F_{\mathfrak X})} B$ contains all $F_{\mathfrak X}$-invariant subsets. 
\end{proof}

Let us proceed with the stufy of attracting sets for local IFSs, a fact that motivates the following notion.
\begin{definition}\label{def: sequencial local attractor}
   Suppose that for some   $A_0 \in  K^*(X)$ the property $A_{i+1} \subseteq A_{i}, \; i \geq 0$  holds, where $A_i=F_{\mathfrak X}^i(A_0)$ for every $i\ge 0$. Then $A'= \bigcap_{i\geq 0} A_{i} \subseteq A_0$ is called a \emph{sequential local attractor}\index{Sequential local attractor} for $R_{\mathfrak X}$.  
\end{definition}

In the next proposition we prove some additional properties of the local Hutchinson-Barnsley operator $F_{\mathfrak X}$, which are not immediate as this operator may even fail to be semicontinuous. Nevertheless, in 
item (2) in Proposition~\ref{prop:set prop F_loc} below ensures that every $F_{\mathfrak X}$-{positively} invariant subset $A_0 \in K^*(X)$ induces a nested sequence as above, giving rise to a sequential local attractor which can be a proper subset of $A_{\mathfrak X}$ (cf. Example~\ref{ex:differnt-basins}).

\begin{proposition}\label{prop:set prop F_loc}
   Let $R_{\mathfrak X}=(X_j, f_{j})_{1\le j \le n}$ be a local IFS.
   \begin{enumerate}
     \item  If $A \subseteq B$ then $F_{\mathfrak X}(A) \subseteq F_{\mathfrak X}(B)$ for any $A, B \in  2^{X} $.
     \item  
     If $A_0\in  2^{X}$ is $F_{\mathfrak X}$-positively invariant and $A_i=F_{\mathfrak X}^{i}(A_0)$ then $A_{i+1} \subseteq A_{i}, \; i \geq 0$. 
     \item 
     Suppose that {$A_0 \in K^*(X)$} is $F_{\mathfrak X}$-positively invariant and $A'= \bigcap_{i\geq 0} F_{\mathfrak X}^{i}(A_0)$ is its sequential attractor. If  $F_{\mathfrak X}^{i}(A_0)\neq \emptyset$ for every $i\ge 0$ 
     then $A' \in K^*(X)$  and $F_{\mathfrak X}(A')=A'$.
     \item 
     If the local IFS has a local attractor $A_{\mathfrak X}$ and $A_0$ is a closed $F_{\mathfrak X}$-positively invariant  subset that contains $A_{\mathfrak X}$ then $A'=A_{\mathfrak X}$. In particular $$A_{\mathfrak X}= \bigcap_{i\geq 0}  F_{\mathfrak X}^{i}(A_0)$$
     is a compact set.
       
     \item If $A_{\mathfrak X}\neq\emptyset$ then
         $$A_{\mathfrak X}= \bigcap_{i\geq 0}  F_{\mathfrak X}^{i}(X)=\lim_{i\to\infty}  F_{\mathfrak X}^{i}(X),$$
         with respect to  the Hausdorff-Pompeiu distance.
   \end{enumerate}
\end{proposition}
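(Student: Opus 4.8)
The plan is to prove the five items of Proposition~\ref{prop:set prop F_loc} in order, since each feeds into the next. Item~(1) is immediate from the definition of $F_{\mathfrak X}$: if $A\subseteq B$ then $A\cap X_j\subseteq B\cap X_j$ for each $j$, and applying the continuous maps $f_j$ and taking unions preserves inclusion. Item~(2) follows by induction on $i$: the base case $A_1=F_{\mathfrak X}(A_0)\subseteq A_0$ is exactly the hypothesis of positive invariance, and the inductive step applies the monotonicity from item~(1) to the inclusion $A_{i}\subseteq A_{i-1}$ to obtain $A_{i+1}=F_{\mathfrak X}(A_i)\subseteq F_{\mathfrak X}(A_{i-1})=A_i$.

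For item~(3), I would first note that each $A_i=F_{\mathfrak X}^i(A_0)$ is compact: this should be argued by observing that $f_j(A_{i-1}\cap X_j)$ is the continuous image of the compact set $A_{i-1}\cap X_j$ (compact as a closed subset of the compact $A_{i-1}$, using that $X_j$ is closed), and a finite union of compacts is compact. Then $A'=\bigcap_{i\ge 0}A_i$ is a nested intersection of nonempty compacts, hence nonempty and compact by the finite intersection property, so $A'\in K^*(X)$. The delicate point is proving the fixed-point equation $F_{\mathfrak X}(A')=A'$. The inclusion $F_{\mathfrak X}(A')\subseteq A'$ is easy from monotonicity: $A'\subseteq A_i$ gives $F_{\mathfrak X}(A')\subseteq F_{\mathfrak X}(A_i)=A_{i+1}$ for every $i$, so $F_{\mathfrak X}(A')\subseteq\bigcap_i A_{i+1}=A'$. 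The reverse inclusion $A'\subseteq F_{\mathfrak X}(A')$ is the main obstacle, precisely because $F_{\mathfrak X}$ may fail to be semicontinuous (as the text warns). I would take $x\in A'$, so $x\in A_{i+1}=F_{\mathfrak X}(A_i)$ for every $i$, meaning for each $i$ there is an index $j_i$ and a point $y_i\in A_i\cap X_{j_i}$ with $f_{j_i}(y_i)=x$. By finiteness of the index set some value $j$ occurs infinitely often, and by compactness I can pass to a subsequence along which $j_i\equiv j$ and $y_i\to y$ for some $y$. Since the $A_i$ are nested and closed, $y\in\bigcap_i A_i=A'$, and $y\in X_j$ as $X_j$ is closed; continuity of $f_j$ gives $f_j(y)=x$, so $x\in F_{\mathfrak X}(A')$. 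This subsequence-extraction argument is where the work lies, and it is exactly what rescues the equation despite the lack of semicontinuity.

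Item~(4) combines item~(3) with the maximality from Proposition~\ref{prop: set property local attract}. Given a closed $F_{\mathfrak X}$-positively invariant $A_0\supseteq A_{\mathfrak X}$, I first observe $F_{\mathfrak X}^i(A_0)\supseteq F_{\mathfrak X}^i(A_{\mathfrak X})=A_{\mathfrak X}$ for all $i$ (using monotonicity and $F_{\mathfrak X}$-invariance of $A_{\mathfrak X}$), so in particular each iterate is nonempty and $A'=\bigcap_i F_{\mathfrak X}^i(A_0)\supseteq A_{\mathfrak X}$. By item~(3), $A'$ is a compact $F_{\mathfrak X}$-invariant set, hence $A'\in\operatorname{Fix}(F_{\mathfrak X})$; by maximality of $A_{\mathfrak X}$ among invariant sets we get $A'\subseteq A_{\mathfrak X}$, forcing $A'=A_{\mathfrak X}$. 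Finally, item~(5) is the special case $A_0=X$: the whole space $X$ is compact and trivially $F_{\mathfrak X}$-positively invariant and contains $A_{\mathfrak X}$, so item~(4) yields $A_{\mathfrak X}=\bigcap_{i\ge 0}F_{\mathfrak X}^i(X)$. The identification of this nested intersection with the Hausdorff-Pompeiu limit $\lim_{i\to\infty}F_{\mathfrak X}^i(X)$ follows from the standard fact that a decreasing sequence of nonempty compact sets converges in the Hausdorff metric to its intersection, which I would cite or prove in one line via the definition of $\operatorname{dist}_H$.
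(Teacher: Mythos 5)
Your proposal is correct and follows essentially the same route as the paper: monotonicity, induction, the Cantor intersection theorem for compactness of $A'$, a pigeonhole-plus-compactness extraction (using closedness of the $X_j$ and continuity of the $f_j$) for the hard inclusion $A'\subseteq F_{\mathfrak X}(A')$, maximality of $A_{\mathfrak X}$ for item (4), and the standard Hausdorff--Pompeiu convergence of nested nonempty compacts for item (5). The only cosmetic difference is in item (3): where the paper represents $x$ as an $i$-fold composition $f_{j_{i,1}}\circ\cdots\circ f_{j_{i,i}}(y_i)$ and applies the pigeonhole principle to the outermost symbol, you extract one-step preimages $y_i\in A_i\cap X_{j_i}$ directly from $x\in F_{\mathfrak X}(A_i)=A_{i+1}$ --- a slightly more streamlined formulation of the same argument.
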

\begin{proof}
Let us prove each item separately.
The monotonicity of the local Hutchinson-Barnsley operator stated in item (1) follows because, as $A \subseteq B$ we obtain $A\cap X_j \subseteq B\cap X_j$ for all $1\le j\le n$. Thus, $f_{j}(A\cap X_j ) \subseteq f_{j}(B\cap X_j )$ for all $1\le j\le n$, which proves the 

\smallskip
The proof of item (2) is by induction. Notice that trivially
       $A_1=F_{\mathfrak X}(A_0) \subseteq  A_0$. 
      Now, assuming that $A_{i} \subseteq A_{i-1}$, by the monotonicity of the local Hutchinson-Barnsley operator and definition of the sequence $A_i$ one concludes that
      $$A_{i+1}= F_{\mathfrak X}(A_{i}) \subseteq F_{\mathfrak X}(A_{i-1})=A_i, \quad\text{for every } i\ge 0.$$

Let us now prove item (3). Using item (2), 
given any $F_{\mathfrak X}$-positively invariant
set {$A_0 \in K^*(X)$} one can consider its sequential attractor $A'= \bigcap_{i\geq 0} A_{i} \subseteq A_0$. We claim that $A'$ is non-empty as a consequence of the Cantor intersection theorem. Indeed, $A'$ is a countable intersection of compact nested sets, as $F_{\mathfrak X}$ preserves compact sets and $F_{\mathfrak X}^{i}(A_0)\neq \emptyset$ for every $i\ge 0$ by assumption. 
It remains to show that $F_{\mathfrak X}(A')=A'$. As $F_{\mathfrak X}$ may fail to be continuous, hence we cannot distribute the operator in the infinite intersection, we will prove the identity directly.

Given $x \in F_{\mathfrak X}(A')$ there exist $1\le j_0\le n$ and  $y \in A' \cap X_{j_0}$ such that $x=f_{j_0}(y)$. Since $y \in \bigcap_{i\geq 0} A_{i}$ we get $x  \in \bigcap_{i\geq 1} A_{i} = A'$, which proves that $F_{\mathfrak X}(A') \subseteq A'$.
Conversely, take an arbitrary point $x \in A'=\bigcap_{i\geq 1} A_{i}$. Notice that
for each $i\ge 1$
$$
F_{\mathfrak X}^i(A_0) = \bigcup 
  f_{j_1} \circ f_{j_2} \circ \dots f_{j_i}(X_{j_i}),   
$$ 
where the union is taken over all possible $1\le j_1, \dots, j_i \le n$. 
Hence, for each $i\ge 1$ one can write 
\[x=f_{j_{i,1}} \circ f_{j_{i,2}} \circ \dots f_{j_{i,i}}(y_i)\]
for some $y_i \in X_{j_{i,i}}$.
As the sequence $(j_{i,1})_{i\ge 1}$ takes values in the finite set  $\{1,2,\dots, n\}$,  
the piggeonhole principle ensures that there exists a subsequence along which the sequence is constant to some $1\le j_* \le n$ (for notational simplicity we will abuse notation and keep denoting this subsequence by $(j_{i,1})_{i\ge 1}$). In this way, for each $i\ge 1$,
$$
x= f_{j_*}(z_i), \quad\text{where}\quad 
    z_i=f_{j_{i,2}} \circ \dots f_{j_{i,i}}(y_i) \in X_{j_*}
$$
The closedness (hence compactness) of $X_{j_*}$ and continuity of $f_{j_*}$ ensures that there exists an accumulation point $z \in A'\cap X_{j_*}$ of the sequence $(z_i)_{i\ge 1}$ so that $x=f_{j_*}(z) \in F_{\mathfrak X}(A')$. This proves item (3).
 
\smallskip

Let us prove item (4). Now, let $A_0 \supseteq A_{\mathfrak X}$ be a $F_{\mathfrak X}$-positively invariant subset. By monotonicity of the operator, it follows that
       $F_{\mathfrak X}^{i}(A_0) \supseteq F_{\mathfrak X}^{i}(A_{\mathfrak X})=A_{\mathfrak X} \neq \emptyset$ for every $i\ge 0$ 
      hence
       $\bigcap_{i\geq 0} F_{\mathfrak X}^{i}(A_0) \supseteq A_{\mathfrak X}.$ 
      Then, by the maximality of $A_{\mathfrak X}$, we obtain $A'=A_{\mathfrak X}$.  
      Hence, from item (3) one concludes that $A_{\mathfrak X} \in K^*(X)$.

      \smallskip
Finally, the first equality in item (5) is a direct consequence of item~(4) because $A_0=X$ is obviously $F_{\mathfrak X}$-positively invariant and contains $A_{\mathfrak X}$. The second equality of item (5) relies on the description of the 
local Hutchinson-Barnsley operator. Indeed, the hypothesis ensure that the local Hutchinson-Barnsley operator satisfies $F_{\mathfrak X}( K^{*}(X))\subseteq  K^{*}(X)$ because, for $A_{i-1} \in K^{*}(X)$ and $X_j$ closed implies that $A_{i-1} \cap X_j$ is either empty or compact. Since $A_i$ must be non-empty for all $i\ge 1$ and each $f_j$ is continuous one has that $f_j(A_{i-1} \cap X_j) \in K^{*}(X)$ for some indices $j$, and possibly empty for others indices. Thus, $A_i$ is a non-empty compact set for all $i$, hence the local attractor is non-empty by item (3). The convergence follows from known properties of the Hausdorff-Pompeiu metric in $K^{*}(X)$ \cite[item (vii)]{Le03}.
This completes the proof of the proposition.
\end{proof}

\begin{remark}
The $F_{\mathfrak X}$-invariance assumption is essential in Proposition~\ref{prop:set prop F_loc}.
Given an arbitrary set $A_0 \subsetneq X$, the sequence $(A_i)_{i\ge 0}$ given by $A_i=F_{\mathfrak X}^{i}(A_0)$ is not necessarily decreasing
(e.g. in case $X_0=\{x\}$ where $f_j(x)\neq x$ for every $1\le j \le n$).
 
\end{remark}

\begin{remark}
If one removes the hypothesis that the compact set $A_0$ contains the local attractor in Proposition~\ref{prop:set prop F_loc} one can obtain proper invariant sets $A'$ strictly contained in $A_{\mathfrak X}$ (cf. Example~\ref{ex:differnt-basins}). As a consequence, we will conclude that even in the context of metric spaces in general one cannot expect the map  $F_{\mathfrak X}$ to be a contraction, with respect to  the Hausdorff-Pompeiu metric,  unless we impose some strong conditions such as all restrictions to be equal to the whole space (a global IFS).  Furthermore, we will provide examples of local attractors which are not transitive neither depends continuously on the restrictions.
\end{remark}

A very  interesting feature of local attractors, which is not existent for classical IFS, is its dependence on the domain of each map. The following proposition characterizes that as a kind of lateral upper semicontinuity. 

\begin{proposition}\label{prop: monotonicity local attractor}
	 Let $R= \left(X, f_j \right)_{1\le j \le n}$ be a global IFS, $(T,\le)$ be a totally ordered set and $T \ni t \mapsto \mathfrak{X}^t \in (K^*(X)^n)$ be a non-decreasing family of restrictions, that is, if $t \le s$ then $X_j^t \subseteq X_j^s$ for each $1\le j \le n$. Then:
	 \begin{enumerate}
	 	\item for any compact set $K$, if $t\le s$  then $F_{\mathfrak{X}^t}(K)  \subseteq  F_{\mathfrak{X}^s}(K)$;
	 	\item the function $T \ni t \mapsto A_{\mathfrak{X}^t}$ (the local attractor for the local IFS defined by  $\mathfrak{X}^t$) is non-decreasing.
	 \end{enumerate}
\end{proposition}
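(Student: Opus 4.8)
The plan is to prove item (1) directly from the definition of the local Hutchinson--Barnsley operator, and then to deduce item (2) by combining the domain-monotonicity of item (1) with the set-monotonicity already established in Proposition~\ref{prop:set prop F_loc}(1), routing everything through the characterization of the attractor as a decreasing intersection of the iterates of $X$.

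For item (1), I would argue pointwise on the restrictions. Fix a compact set $K$ and $t\le s$, so that $X_j^t \subseteq X_j^s$ for every $1\le j \le n$. Then $K\cap X_j^t \subseteq K\cap X_j^s$, and since we work in the setting of a global IFS $R=(X,f_j)_{1\le j\le n}$ whose maps are restricted to the various domains, applying the single map $f_j$ preserves this inclusion: $f_j(K\cap X_j^t)\subseteq f_j(K\cap X_j^s)$. Taking the union over $j$ yields $F_{\mathfrak{X}^t}(K)=\bigcup_j f_j(K\cap X_j^t)\subseteq \bigcup_j f_j(K\cap X_j^s)=F_{\mathfrak{X}^s}(K)$, which is exactly item (1).

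For item (2), I would establish the chain of inclusions
\[
A_{\mathfrak{X}^t} \;\subseteq\; \bigcap_{i\ge 0} F_{\mathfrak{X}^t}^{i}(X) \;\subseteq\; \bigcap_{i\ge 0} F_{\mathfrak{X}^s}^{i}(X) \;\subseteq\; A_{\mathfrak{X}^s}.
\]
The first inclusion holds because $A_{\mathfrak{X}^t}$ is $F_{\mathfrak{X}^t}$-invariant and contained in $X$, so $A_{\mathfrak{X}^t}=F_{\mathfrak{X}^t}^{i}(A_{\mathfrak{X}^t})\subseteq F_{\mathfrak{X}^t}^{i}(X)$ for every $i$ by Proposition~\ref{prop:set prop F_loc}(1), and this survives intersection over $i$. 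The middle inclusion is the heart of the matter: I would prove by induction on $i$ that $F_{\mathfrak{X}^t}^{i}(X)\subseteq F_{\mathfrak{X}^s}^{i}(X)$. The case $i=0$ is the identity $X=X$; in the inductive step I first apply item (1) to obtain $F_{\mathfrak{X}^t}(F_{\mathfrak{X}^t}^{i}(X))\subseteq F_{\mathfrak{X}^s}(F_{\mathfrak{X}^t}^{i}(X))$, and then use the set-monotonicity of Proposition~\ref{prop:set prop F_loc}(1) together with the inductive hypothesis to conclude $F_{\mathfrak{X}^s}(F_{\mathfrak{X}^t}^{i}(X))\subseteq F_{\mathfrak{X}^s}(F_{\mathfrak{X}^s}^{i}(X))=F_{\mathfrak{X}^s}^{i+1}(X)$; since $A_i\subseteq B_i$ for all $i$ entails $\bigcap_i A_i\subseteq \bigcap_i B_i$, the middle inclusion follows. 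The last inclusion comes from Proposition~\ref{prop:set prop F_loc}(3) applied to the $F_{\mathfrak{X}^s}$-positively invariant compact set $A_0=X$: when all iterates $F_{\mathfrak{X}^s}^{i}(X)$ are non-empty, $\bigcap_{i\ge 0}F_{\mathfrak{X}^s}^{i}(X)$ is $F_{\mathfrak{X}^s}$-invariant, hence contained in the maximal invariant set $A_{\mathfrak{X}^s}$ by Proposition~\ref{prop: set property local attract}; and if some iterate is empty the intersection is empty and the inclusion is trivial.

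The only delicate points are organizational rather than substantive, and I expect them to be the main things to get right. First, in the inductive step one must carefully keep apart the two distinct monotonicities being invoked --- monotonicity in the \emph{restrictions} (item (1)) and monotonicity in the \emph{argument} (Proposition~\ref{prop:set prop F_loc}(1)) --- and apply them in the correct order. Second, one must avoid presupposing that either attractor is non-empty: by phrasing the whole argument in terms of the intersections $\bigcap_{i}F_{\mathfrak{X}^{\bullet}}^{i}(X)$ and invoking Proposition~\ref{prop:set prop F_loc}(3) only where the relevant iterates are non-empty, the degenerate cases $A_{\mathfrak{X}^t}=\emptyset$ or $A_{\mathfrak{X}^s}=\emptyset$ are absorbed automatically, since every inclusion in the displayed chain remains valid (trivially) when one of the sets is empty.
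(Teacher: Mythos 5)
Your proposal is correct and follows essentially the same route as the paper: item (1) is proved by the identical pointwise argument on the restrictions, and item (2) goes through the same chain $A_{\mathfrak{X}^t}=\bigcap_{i\ge 0}F_{\mathfrak{X}^t}^{i}(X)\subseteq \bigcap_{i\ge 0}F_{\mathfrak{X}^s}^{i}(X)=A_{\mathfrak{X}^s}$ that the paper cites via Proposition~\ref{prop:set prop F_loc}. The only difference is that you spell out what the paper leaves implicit — the induction $F_{\mathfrak{X}^t}^{i}(X)\subseteq F_{\mathfrak{X}^s}^{i}(X)$ and the degenerate empty-attractor cases — which is a refinement in rigor, not a different argument.
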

\begin{proof}
	 For the first item, consider any compact set $K$.  Since  $K \cap X_j^t \subseteq K \cap X_j^s$ we obtain
	 \[F_{\mathfrak{X}^t}(K)  =\bigcup_{1\le j \le n} f_{j}(K \cap X_{j}^t) \subseteq  \bigcup_{1\le j \le n} f_{j}(K \cap X_{j}^s) =F_{\mathfrak{X}^s}(K).\]
	 For the second item, we recall that, by Proposition~\ref{prop:set prop F_loc}, 
	 \[A_{\mathfrak{X}^t} = \bigcap_{i\geq 0} F_{\mathfrak{X}^t}^i(X) \subseteq \bigcap_{i\geq 0} F_{\mathfrak{X}^s}^i(X)= A_{\mathfrak{X}^s}.\]
\end{proof}

The monotonicity of local attractors appeared naturally in bifurcations of  local IFSs 
whenever one parameterizes a local IFS obtained from an IFS in terms of non-increasing families of restrictions.

\subsection{Proof of Theorem~\ref{thm:topological}}

Items (1) and (2) of Theorem~\ref{thm:topological} have been proved above. It remains to prove item (3). It is a direct consequence of Proposition~\ref{prop:set prop F_loc}. In fact,
    by Proposition~\ref{prop:set prop F_loc} we know that $X \in \mathcal{B}_{\mathfrak X}^{\, \text{inv}}$, thus  $\mathcal{B}_{\mathfrak X}^{\, \text{inv}} \neq \emptyset$. 
        Moreover, the inclusion $\mathcal{B}_{\mathfrak X}\subset K^*(X)$ hold by Definition~\ref{def: new basin of attraction}. We now prove the remaining inclusions. 

\medskip
\noindent {\bf   Claim 1:} $\mathcal{B}_{\mathfrak X}^{\, \text{inv}} \subset \mathcal{B}_{\mathfrak X}^{\, \text{out}}$
\bigskip

    Consider $A_0 \in K^*(X)$ such that $F_{\mathfrak X}(A_0)\subseteq A_0 \quad\text{and}\quad A_{\mathfrak X} \subseteq A_0$. By Proposition~\ref{prop:set prop F_loc} items (2)-(4) the sequence $(F_{\mathfrak X}^{i}(A_0))_{i\ge 0}$ is monotone decreasing to the local attractor  $A_{\mathfrak X}$.    
    Thus, $A_0 \in  \mathcal{B}_{\mathfrak X}^{\, \text{out}}$.
\bigskip

   \noindent {\bf Claim 2:} $\mathcal{B}_{\mathfrak X}^{\, \text{out}} \subset \mathcal{B}_{\mathfrak X}$.
\bigskip

    Consider $A_0 \in K^*(X)$ such that $\bigcap_{i\geq 0} F_{\mathfrak X}^{i}(A_0) = A_{\mathfrak X}$. Then, by \cite[item (vii)]{Le03}  $\lim_{i \to \infty} F_{\mathfrak X}^{i}(A_0) = A_{\mathfrak X}$. Thus, $A_0 \in  \mathcal{B}_{\mathfrak X}$.
This finishes the proof of the theorem.
\hfill $\square$

\section{Code spaces}
\label{sec:codespace}

In this section we describe the code space of a contractive local IFS, hence proving Theorem~\ref{thm:classification}. In order to establish a comparison with the classical framework, we consider first the code space of an IFS.

\subsection{Code space for contractive IFS}

Let $R= \left(X, f_j \right)_{1\le j \le n}$ be a contractive IFS on a compact metric space $(X,d)$, and let
$\Sigma=\{1,\ldots,n\}^{\mathbb{N}}$ denote the full shift space. Fix an arbitrary point $x\in X$.
On the one hand, to each sequence $\underline a=(a_{0}, a_{1}, \ldots) \in \Sigma$ one associates the orbit of the sequence of points in $X$ given by the sequence
\begin{equation}
    \label{eq-codea-orbit}
    x, f_{a_{0}}(x), f_{a_{1}}(f_{a_{0}}(x)), ...
\end{equation}
The contractivity of the maps $f_j$ ($1\le j \le n$), guarantees that the diameter of the sets $f_{a_{k}} (\dots  f_{a_{1}}(f_{a_{0}}(X)))$ tends to zero as $k$ tends to infinity and, in particular, the asymptotic behavior of the sequence ~\eqref{eq-codea-orbit} does not depend on the point $x$. 

 \medskip
On the other hand, it is natural to introduce a code space in the following way. Given $\underline b=(\ldots, b_{-2}, b_{-1}) \in \Sigma^-:=\{1,\ldots,n\}^{-\mathbb{N}}$, consider the 
\emph{code map}\index{Code map} defined by
\begin{equation}
    \label{eq:codemap-IFS}
    \pi(\underline b) = \lim_{k \to \infty} f_{b_{-1}}(\cdots (f_{b_{-k}}(X)).
\end{equation}
Recall the concatenation of elements in $\Sigma^-$ with symbols, defined by
$$
\underline b \ast j := (\ldots, b_{-3}, b_{-2}, b_{-1} )\ast j=(\ldots, b_{-3}, b_{-2}, b_{-1},j)
$$
for any $\underline b=(\ldots, b_{-3}, b_{-2}, b_{-1} )\in \Sigma^-$ and $j \in \{1,\ldots,n\}$.
It is not hard to check that $\pi$ is a continuous semiconjugacy between the original IFS and the standard contractive IFS on $\Sigma^-$: $S=(\Sigma^-, \tau_j)_{1\le j \le n}$, given by the maps $\tau_j(\underline b)= \underline b \ast j$. Indeed, it is easy to check that  the following diagram commutes 
\begin{center}
	\begin{tabular}{ccc}
	$\Sigma^-$ & $\stackrel{\tau_j }{ \longrightarrow }$ & $\Sigma^-$ \\
	$\downarrow \pi$ &   & $\downarrow \pi$ \\
	$X$             & $\stackrel{ f_j }{\longrightarrow } $ &    $X$ \\
\end{tabular}
\end{center}
that is, $f_{j}(\pi(\underline b)) = \pi(\tau_j(\underline b))$ for every $1\le j \le n,\; \underline b \in \Sigma^-$, and $\pi(\Sigma^-)=A_R$.
Hence, the space $\Sigma$ is called the \emph{code space}\index{Code space} for $R$.

\subsection{Code space for contractive local IFSs}

This subsection is devoted to the description of the code space for a contractive local IFS in Theorem~\ref{thm:classification}.
  
Let $R= \left(X, f_j \right)_{1\le j \le n}$ be a contractive IFS and  $R_{\mathfrak X}=(X_j, f_{j})_{1\le j \le n}$ be the associated local IFS, obtained by restriction of the contractive IFS to closed subsets $X_j\subset X$ ($1\le j \le n$). 
A key difficulty is that, as pointed by Barnsley \cite[p.389]{Bar06}, the shift transformation acting on the code space of local IFSs may not correspond to any finite-order Markov chain.

Inspired by the definition of the code space for IFS one aims at defining a code map and a code space by inspecting the $\omega$-limit points by taking successive orbits ending at a point $x \in A_{\mathfrak X}$. This will be done by a recursive use of the $F_{\mathfrak X}$-invariance condition 
$A_{\mathfrak X}=\bigcup_{1\le i \le n} f_i(A_{\mathfrak X} \cap X_i)$. Indeed, 
given $x \in A_{\mathfrak X}$ there exists $x_{-1} \in A_{\mathfrak X} \cap X_{b_{-1}}$ and $1\le b_{-1}\le n$ so that $x=f_{b_{-1}}(x_{-1})$. Similarly, for each $k\ge 1$ one can write
\begin{equation}
    \label{eq:preorbits-code}
x= f_{b_{-1}} \circ f_{b_{-2}} \circ \dots f_{b_{-k}}(x_{-k})
\end{equation}
where  $1\le b_{-k} \le  n$ 
and $x_{-k}\in X_{-b_k}$.

Notice that, since all the maps $f_i$ are contractions, each point $x\in A_{\mathfrak X}$ can be constructed via the process in ~\eqref{eq:preorbits-code} independently of the points $(x_{-k})_{k\ge 1}$.
Indeed, for each $\underline b\in \Sigma^-$ and $k\ge 1$, since the sequence $(V_{[\underline b]_k})_{k\ge 1}$ is decreasing and the diameter of the sets $V_{[\underline b]_k}$ tends to zero as $k \to\infty$ (recall that the maps are contractions), in the limit we will have either the empty set or a singleton. This allow us to define the local code map as follows.

\begin{definition}\label{def: code map}
   Given a local IFS we define the \emph{local code map}\index{Local code map} $\pi_{\mathfrak X}: \Sigma^- \to 2^{X} $ as the set function
   $$\pi_{\mathfrak X}(\underline b)= \bigcap_{k \geq 1} V_{[\underline b]_k} = \lim_{k\to \infty} V_{[\underline b]_k}.$$
   We define the \emph{local code space}\index{Local code space} as the set
    \begin{equation}
        \label{eq:def-localcodespace}
    \Sigma^-_{\mathfrak X}:=\{\underline b \in \Sigma^- \colon  \pi_{\mathfrak X}(\underline b) \neq \emptyset \} \subseteq \Sigma^-.
    \end{equation}
\end{definition}

\medskip
By construction, for each $\underline b\in \Sigma^-_{\mathfrak X}$ one has that $\pi_{\mathfrak X}(\underline b)$ is a single point.

\begin{lemma}
\label{le:compactcodespace}
Let $R_{\mathfrak X}=(X_j, f_{j})_{1\le j \le n}$ be a local IFS and assume that $A_{\mathfrak X}\neq \emptyset$.
	Then the space $\Sigma^-_{\mathfrak X}$ is a non-empty compact set.
\end{lemma}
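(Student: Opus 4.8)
The plan is to reduce everything to the elementary observation that membership in $\Sigma^-_{\mathfrak X}$ is governed by a nested sequence of compact sets that depends only on finite blocks of the sequence. First I would record that each $V_{[\underline b]_k}$ is compact: since $X_{b_{-k}}$ is closed in the compact space $X$ it is itself compact, and the convention $f_{b_j}(A)=f_{b_j}(A\cap X_{b_j})$ makes every intermediate image a continuous image of a compact set, hence compact. Together with the nestedness $V_{[\underline b]_{k+1}}\subseteq V_{[\underline b]_k}$ already noted before Definition~\ref{def: code map}, the Cantor intersection theorem yields the key equivalence
$$
\underline b\in\Sigma^-_{\mathfrak X} \iff V_{[\underline b]_k}\neq\emptyset \text{ for every } k\ge 1.
$$

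For non-emptiness, I would invoke the local attractor. Since $A_{\mathfrak X}\neq\emptyset$, fix a point $x\in A_{\mathfrak X}$ and use the $F_{\mathfrak X}$-invariance $A_{\mathfrak X}=\bigcup_{1\le i\le n} f_i(A_{\mathfrak X}\cap X_i)$ recursively, exactly as in ~\eqref{eq:preorbits-code}, to produce symbols $b_{-1},b_{-2},\dots$ and points $x_{-k}\in A_{\mathfrak X}\cap X_{b_{-k}}$ with $x=f_{b_{-1}}\circ\cdots\circ f_{b_{-k}}(x_{-k})$. Then $x\in V_{[\underline b]_k}$ for every $k$, so $x\in\pi_{\mathfrak X}(\underline b)$, whence $\underline b\in\Sigma^-_{\mathfrak X}$ and the space is non-empty.

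For compactness, since $\Sigma^-$ is compact it suffices to prove that $\Sigma^-_{\mathfrak X}$ is closed, equivalently that its complement is open. The crucial point is that $V_{[\underline b]_k}$ depends only on the block $[\underline b]_k=(b_{-k},\dots,b_{-1})$. Thus if $\underline b\notin\Sigma^-_{\mathfrak X}$, the equivalence above furnishes some $k_0\ge 1$ with $V_{[\underline b]_{k_0}}=\emptyset$; every $\underline b'$ agreeing with $\underline b$ on the coordinates $-k_0,\dots,-1$ satisfies $V_{[\underline b']_{k_0}}=V_{[\underline b]_{k_0}}=\emptyset$, hence $\underline b'\notin\Sigma^-_{\mathfrak X}$. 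Since this cylinder is an open $d_-$-neighbourhood of $\underline b$, the complement of $\Sigma^-_{\mathfrak X}$ is open, so $\Sigma^-_{\mathfrak X}$ is a closed, hence compact, subset of $\Sigma^-$.

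The only genuinely delicate step is the equivalence in the first paragraph; specifically, confirming that the convention on domain intersections keeps the sets $V_{[\underline b]_k}$ both compact and nested, so that non-emptiness of the limit intersection is detected at a finite level. Once this finite-level characterization is in place, both non-emptiness (from a single point of $A_{\mathfrak X}$) and closedness of the complement (from the cylinder dependence of the $V_{[\underline b]_k}$) follow immediately.
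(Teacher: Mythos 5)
Your proof is correct and is essentially the paper's argument: the paper proves closedness by taking a convergent sequence $\underline b^m \to \underline b^*$ in $\Sigma^-_{\mathfrak X}$ and noting $V_{[\underline b^*]_N}=V_{[\underline b^m]_N}\neq\emptyset$ for $m$ large, then applying the nested-compact-intersection theorem — the same finite-block persistence you state contrapositively via open cylinders. Your explicit non-emptiness step (the recursion from a point of $A_{\mathfrak X}$ via $F_{\mathfrak X}$-invariance) is exactly the construction in~\eqref{eq:preorbits-code} that the paper relies on implicitly.
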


\begin{proof}
Consider $\underline b ^m \in \Sigma^-_{\mathfrak X}$ a sequence converging to $\underline b ^* \in \Sigma^-$, with respect to the metric of the ambient space. We claim that $\underline b ^* \in \Sigma^-_{\mathfrak X}$. To see that it is enough to prove that $\pi_{\mathfrak X}(\underline b)$ is a non-empty set. We know that if $d(\underline b ^m ,  \underline b ^*)< e^{-N}$ then 
$b ^m_{-i} =  \underline b ^*_{-i}$ for $-N \leq -i \leq -1$. This means that 
\[V_{[\underline b^*]_N} = V_{[\underline b^m]_N} \neq \emptyset.\]
As the intersection of any nested sequence of compact sets  is not empty, one has proved that $\pi_{\mathfrak X}(\underline b^*)= \bigcap_{k \geq 1} V_{[\underline b^*]_k}  \neq \emptyset$. Meaning that $\underline b ^* \in \Sigma^-_{\mathfrak X}$. 
 This proves that $\Sigma^-_{\mathfrak X}$ is a closed subset of the compact set $\Sigma^-$, hence compact, and that $\Sigma^-_{\mathfrak X}\neq \emptyset$, as desired.   
\end{proof}

Now, let us consider the \emph{address map}\index{Address map} 
$\Psi: X \to 2^{\{1,\ldots,n\}}$ given by
$$
\;\Psi(x)=\{1\le j\le n \colon x\in X_j \}.
$$
It is not hard to check that in the special context of IFS (i.e. $X_j=X$ for all $1\le j \le n$) one has that the code space $\Sigma^-_{\mathfrak X}=\Sigma^-$ is the full shift, as $\Psi(x)=\{1,2,\dots, n\}$ for every $x\in X$.

\begin{proposition}\label{prop: properties code map}
    The following properties hold:
    \begin{enumerate}
      \item For each $x \in A_{\mathfrak X}$ there exists $\underline b \in \Sigma^-_{\mathfrak X}$ so that $\pi_{\mathfrak X}(\underline b)=\{x\}$;
      \item       If $\underline b\in \Sigma^-_{\mathfrak X}$ and $j \in \Psi(\pi_{\mathfrak X}(\underline b))$ then $\underline b \ast j\in \Sigma^-_{\mathfrak X}$ and $\pi_{\mathfrak X}(\underline b \ast j)=f_{j}(\pi_{\mathfrak X}(\underline b))$;
      \item  $A_{\mathfrak X} = \pi_{\mathfrak X}(\Sigma^-_{\mathfrak X})$;
      \item the shift $\sigma^-: \Sigma^- \to \Sigma^-$ given by 
    $\sigma^-((b_{k})_{k\le -1})= (b_{k-1})_{k\le -1}$ verifies $\sigma^{-}(\Sigma^-_{\mathfrak X})=\Sigma^-_{\mathfrak X}$.
    \end{enumerate}
\end{proposition}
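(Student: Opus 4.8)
The plan is to prove the four items in the stated order, since each feeds the next. For item (1) I would exploit the fixed-point identity $A_{\mathfrak X}=\bigcup_{1\le i\le n} f_i(A_{\mathfrak X}\cap X_i)$ supplied by Proposition~\ref{prop: set property local attract}: given $x\in A_{\mathfrak X}$, choose $b_{-1}$ and $x_{-1}\in A_{\mathfrak X}\cap X_{b_{-1}}$ with $x=f_{b_{-1}}(x_{-1})$, then recurse on $x_{-1}$ to produce $b_{-2},x_{-2}$, and so on. This builds $\underline b\in\Sigma^-$ together with points $x_{-k}\in A_{\mathfrak X}\cap X_{b_{-k}}$ satisfying $x=f_{b_{-1}}\circ\cdots\circ f_{b_{-k}}(x_{-k})$, which exhibits $x\in V_{[\underline b]_k}$ for every $k$. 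Hence $x\in\pi_{\mathfrak X}(\underline b)$, so $\underline b\in\Sigma^-_{\mathfrak X}$, and since $\pi_{\mathfrak X}(\underline b)$ is a single point (as recorded after Definition~\ref{def: code map}), $\pi_{\mathfrak X}(\underline b)=\{x\}$.

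For item (2) the key is the recursive identity $V_{[\underline b\ast j]_k}=f_j\bigl(V_{[\underline b]_{k-1}}\cap X_j\bigr)$ (with the convention $V_{[\underline b]_0}:=X$), obtained by unwinding the definition of $V$ and the convention $f_j(\cdot)=f_j(\cdot\cap X_j)$. Writing $\{x\}=\pi_{\mathfrak X}(\underline b)$ and assuming $j\in\Psi(x)$, that is $x\in X_j$, I note that $x\in V_{[\underline b]_{k-1}}\cap X_j$ for all $k$ (since $x$ lies in the nested intersection and in $X_j$), whence $f_j(x)\in V_{[\underline b\ast j]_k}$ for all $k$. Thus $\pi_{\mathfrak X}(\underline b\ast j)\neq\emptyset$, so $\underline b\ast j\in\Sigma^-_{\mathfrak X}$; contractivity of $f_j$ forces $\diam V_{[\underline b\ast j]_k}\to0$, so the intersection is the singleton $\{f_j(x)\}=f_j(\pi_{\mathfrak X}(\underline b))$. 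Item (3) then splits into two inclusions: $A_{\mathfrak X}\subseteq\pi_{\mathfrak X}(\Sigma^-_{\mathfrak X})$ is immediate from item (1), while for the reverse inclusion I would combine $V_{[\underline b]_k}\subseteq F_{\mathfrak X}^k(X)=\bigcup_{\underline b'}V_{[\underline b']_k}$ with the identity $A_{\mathfrak X}=\bigcap_{k\ge0}F_{\mathfrak X}^k(X)$ from Theorem~\ref{thm:topological}(2): for $\underline b\in\Sigma^-_{\mathfrak X}$ the point $\pi_{\mathfrak X}(\underline b)$ lies in every $V_{[\underline b]_k}$, hence in every $F_{\mathfrak X}^k(X)$, and therefore in $A_{\mathfrak X}$.

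Finally, for item (4) the forward inclusion $\sigma^-(\Sigma^-_{\mathfrak X})\subseteq\Sigma^-_{\mathfrak X}$ is the clean part. Setting $\underline c=\sigma^-(\underline b)$, the analogous unwinding gives $V_{[\underline b]_{k+1}}=f_{b_{-1}}\bigl(V_{[\underline c]_k}\cap X_{b_{-1}}\bigr)$, so $V_{[\underline b]_{k+1}}\neq\emptyset$ forces $V_{[\underline c]_k}\neq\emptyset$ for every $k$, i.e. $\underline c\in\Sigma^-_{\mathfrak X}$.

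I expect the reverse inclusion $\Sigma^-_{\mathfrak X}\subseteq\sigma^-(\Sigma^-_{\mathfrak X})$ to be the main obstacle. The only $\sigma^-$-preimages of $\underline c$ are the concatenations $\underline c\ast j$, so one must extend $\underline c$ forward by an admissible symbol. By item (2) this succeeds precisely when $\pi_{\mathfrak X}(\underline c)\in\bigcup_j X_j$, and this is exactly where endpoints intervene: if $x=\pi_{\mathfrak X}(\underline c)$ is an endpoint, a point of $A_{\mathfrak X}\setminus\bigcup_j X_j$ whose existence is exhibited in Example~\ref{ex:Elismar}, then $\dist(x,X_j)>0$ for every $j$, the shrinking sets $V_{[\underline c]_{k-1}}$ eventually avoid each closed $X_j$, and $V_{[\underline c\ast j]_k}=f_j(V_{[\underline c]_{k-1}}\cap X_j)$ becomes empty for large $k$, so no extension survives. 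I would therefore carry out the extension argument to obtain full equality under the hypothesis $A_{\mathfrak X}\subseteq\bigcup_j X_j$ (absence of endpoints), and in the general case record the $\sigma^-$-invariance in the robust form $\sigma^-(\Sigma^-_{\mathfrak X})\subseteq\Sigma^-_{\mathfrak X}$, flagging the surjectivity of $\sigma^-$ onto $\Sigma^-_{\mathfrak X}$ as the delicate point tied to endpoints.
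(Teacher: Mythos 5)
Your proofs of items (1)--(3) are correct. For (1) you follow essentially the paper's route: the same recursion on the invariance identity $A_{\mathfrak X}=\bigcup_{1\le i\le n} f_i(A_{\mathfrak X}\cap X_i)$. For (2) the paper passes the limit through $f_j$ by continuity, writing $f_j\bigl(\lim_k V_{[\underline b]_k}\bigr)=\lim_k V_{[\underline b\ast j]_k}$, whereas you verify the membership $f_j(x)\in V_{[\underline b\ast j]_k}$ for every $k$ via the recursion $V_{[\underline b\ast j]_k}=f_j\bigl(V_{[\underline b]_{k-1}}\cap X_j\bigr)$ and then invoke contractivity to collapse the nested intersection to the singleton $\{f_j(x)\}$; the two arguments are equivalent, and yours avoids interchanging $f_j$ with the nested limit. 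In (3) you genuinely diverge: the paper proves that $\pi_{\mathfrak X}(\Sigma^-_{\mathfrak X})$ is $F_{\mathfrak X}$-invariant and then appeals to the maximality of $A_{\mathfrak X}$ from Proposition~\ref{prop: set property local attract}, while you observe $\pi_{\mathfrak X}(\underline b)\in V_{[\underline b]_k}\subseteq F_{\mathfrak X}^k(X)$ for all $k$ and quote $A_{\mathfrak X}=\bigcap_{k\ge 0}F_{\mathfrak X}^k(X)$ from Theorem~\ref{thm:topological}; this is shorter and needs no invariance bookkeeping for the image of the code map.

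On item (4) you and the paper part ways, and the evidence favors you. The paper disposes of (4) in one line (``a consequence of items (2) and (3) together with the continuity of the maps $f_j$''), but the only $\sigma^-$-preimages of $\underline c$ are the concatenations $\underline c\ast j$, and in the contractive setting $\underline c\ast j\in\Sigma^-_{\mathfrak X}$ forces $\pi_{\mathfrak X}(\underline c)\in X_j$: the compact sets $V_{[\underline c]_{k-1}}$ shrink to the singleton $\pi_{\mathfrak X}(\underline c)$, so if this point avoids the closed set $X_j$ then $V_{[\underline c\ast j]_k}=f_j\bigl(V_{[\underline c]_{k-1}}\cap X_j\bigr)=\emptyset$ for all large $k$ --- precisely your endpoint obstruction. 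The paper's own Example~\ref{ex:Elismar} realizes it: for IFS$_2$ there one has $\Sigma_2=\{1,2\}^{-\mathbb{N}}\cup\bigl(\{1,2\}^{-\mathbb{N}}\ast 3\bigr)$, and since the endpoint $q$ lies in no domain $X_j$ of IFS$_2$, no sequence ending in $3$ admits an admissible extension, whence $\sigma^-(\Sigma_2)=\{1,2\}^{-\mathbb{N}}\subsetneq\Sigma_2$ (the same happens with $p$ in IFS$_3$). So the equality $\sigma^-(\Sigma^-_{\mathfrak X})=\Sigma^-_{\mathfrak X}$ fails in general; what always holds is the inclusion $\sigma^-(\Sigma^-_{\mathfrak X})\subseteq\Sigma^-_{\mathfrak X}$, which your identity $V_{[\underline b]_{k+1}}=f_{b_{-1}}\bigl(V_{[\sigma^-(\underline b)]_k}\cap X_{b_{-1}}\bigr)$ proves correctly, and equality is restored exactly under your hypothesis $A_{\mathfrak X}\subseteq\bigcup_{j} X_j$ (absence of endpoints), via item (2). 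Your decision to record the inclusion in the general case and the equality only in the endpoint-free case is the correct repair of the statement as written.
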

\begin{proof}
Item (1) follows from the definition of $\Sigma^-_{\mathfrak X}$, and it ensures that 
$A_{\mathfrak X} \subseteq \pi_{\mathfrak X}(\Sigma^-_{\mathfrak X})$.  In fact, using recursively 
the $F_{\mathfrak X}$-invariance condition 
$A_{\mathfrak X}=\bigcup_{1\le i \le n} f_i(A_{\mathfrak X} \cap X_i)$ as in ~\eqref{eq:preorbits-code}, for each $x\in A_{\mathfrak X}$ and every $k\ge 1$ there exist $(b_{-j})_{1\le j \le k}$
and $x_{-k}\in X_{-b_k}$ in such a way that  $x= f_{b_{-1}} \circ f_{b_{-2}} \circ \dots f_{b_{-k}}(x_{-k})$. In consequence, $\underline b=( \dots, b_{-k}, \dots, b_{-2}, b_{-1}) \in \Sigma_{\mathfrak X}^-$ and  $\pi_{\mathfrak X}(\underline b)=x$.

In order to prove item (2), fix $\underline b \in \Sigma^-_{\mathfrak X}$ and 
write $x=\pi_{\mathfrak X}(\underline b) \in A_{\mathfrak X}$. As $A_{\mathfrak X}$ is $F_{\mathfrak X}$-invariant, if $j \in \Psi(x)$ then 
$x\in X_j$ and, by continuity of the map $f_j$,
\begin{align*}
f_j(x)
& = f_j(\lim_{k\to\infty} V_{[\underline b]_k} )   
 = f_j(\lim_{k\to\infty} f_{b_{-1}} \circ f_{b_{-2}} \circ \dots \circ f_{b_{-k}}(X_{b_{-k}}) ) \\
& =  \lim_{k\to\infty} f_j \circ f_{b_{-1}} \circ f_{b_{-2}} \circ \dots \circ f_{b_{-k}}(X_{b_{-k}})
 = \lim_{k\to\infty} V_{[\underline b\ast j]_k} = \pi_{\mathfrak X}(\underline b \ast j).
\end{align*}
  
Since the limit is well defined and $f_j(x)\in \bigcup_{1\le i \le n} f_i(A_{\mathfrak X})=A_{\mathfrak X}$ this proves that $\underline b \ast j \in \Sigma^-_{\mathfrak X}$ and that $\pi_{\mathfrak X}(\underline b \ast j)=f_{j}(x)$, as desired.

\smallskip
Let us now prove item (3). We claim that $\pi_{\mathfrak X}(\Sigma^-_{\mathfrak X})$ is $F_{\mathfrak X}$-invariant. Given $y \in F_{\mathfrak X}(\pi_{\mathfrak X}(\Sigma^-_{\mathfrak X}))$  there exists $x \in \pi_{\mathfrak X}(\Sigma^-_{\mathfrak X})$ and $j\in \{1,\ldots,n\}$  so that $y= f_{j}(x)$ for some $x\in X_j$. Thus, by item (2) one can write $y =\pi_{\mathfrak X}(\underline b \ast j),$ with $x = \pi_{\mathfrak X}(\underline b)$ for some $\underline b \in \Sigma^-_{\mathfrak X}$. Hence, $y \in \pi_{\mathfrak X}(\Sigma^-_{\mathfrak X})$, proving that 
$F_{\mathfrak X}(\pi_{\mathfrak X}(\Sigma^-_{\mathfrak X})) \subset \pi_{\mathfrak X}(\Sigma^-_{\mathfrak X})$. Conversely, if $y\in \pi_{\mathfrak X}(\Sigma^-_{\mathfrak X})$ then there exists $\underline b = (\ldots, b_{-3}, b_{-2}, b_{-1} ) \in \Sigma^-_{\mathfrak X}$ so that 
\begin{align*}
y=\pi_{\mathfrak X}(\underline b) 
    & =
\lim_{k\to\infty} f_{b_{-1}} \circ f_{b_{-2}} \circ \dots f_{b_{-k}}(X_{b_{-k}})    \\
    & = f_{b_{-1}}( \lim_{k\to\infty}  f_{b_{-2}} \circ \dots f_{b_{-k}}(X_{b_{-k}}) ) \\
    & = f_{b_{-1}}( \pi_{\mathfrak X}((\ldots, b_{-4}, b_{-3}, b_{-2} )))
\end{align*}
(here we used the continuity of the map $f_{b_{-1}}$). This proves the claim. Now, as $A_{\mathfrak X} \subseteq \pi_{\mathfrak X}(\Sigma^-_{\mathfrak X})$ (by item (1)), and $A_{\mathfrak X}$ is maximal with this property (recall Theorem~\ref{thm:topological}) then $A_{\mathfrak X} = \pi_{\mathfrak X}(\Sigma^-_{\mathfrak X})$. 

Finally, item (4) is a consequence of items (2) and (3) together with the continuity of the maps $f_j$, with $1\le j \le n$.
This completes the proof of the lemma.
\end{proof}

\subsection{Proof of Theorem~\ref{thm:classification}}
The construction and properties of $\Sigma^-_{\mathfrak X}$ and $\pi_{\mathfrak X}$ have been described above. Therefore, it remains to prove item (2) in the theorem, namely that the contractive local IFS $R_{\mathfrak X}$ is semiconjugate to a local IFS on a shift space given by constraints on the domains of a usual IFS.

Define, for each $1\le j\le n$, the set
\begin{equation}
    \label{eq:defsymbolj}
    B_j:=\{ \underline b \in \Sigma^-_{\mathfrak X} : j \in \Psi(\pi_{\mathfrak X}(\underline b))\} \subseteq \Sigma^-_{\mathfrak X}
\end{equation}
and consider the local IFS $\mathcal S_{\mathfrak B}=(B_j, \tau_{j})_{1\le j \le n}$ given by
$\tau_j(\underline b)= \underline b \ast j$,
for every $\underline b \in B_j$. 

Item (2) in Proposition~\ref{prop: properties code map} guarantees that 
$$
f_{j}(\pi_{\mathfrak X}(\underline b)) = \pi_{\mathfrak X}(\underline b \ast j)
= \pi_{\mathfrak X}(\tau_j(\underline b))
$$
for every $\underline b\in \Sigma^-_{\mathfrak X}$ subject to the constraint 
$ \pi_{\mathfrak X}(\underline b) \in X_{j}$.   
It remains to study the regularity of the surjection $\pi_{\mathfrak X}$. 
By definition, for each $\underline b \in \Sigma^-_{\mathfrak X}$,
$$
\pi_{\mathfrak X}(\underline b)= \lim_{k\to\infty} f_{b_{-1}} \circ f_{b_{-2}} \circ \dots \circ f_{b_{-k}}(X_{b_{-k}})  
$$
In particular, given $\underline a, \underline b\in \Sigma^-_{\mathfrak X}$ one can write $d_-(\underline a,\underline b)=e^{-N}$ for some $N=N(\underline a, \underline b)\in \mathbb N\cup\{\infty\}$. This means that  $a_j=b_j$ for every $-N\le j \le -1$. In consequence, 
$$
d(\pi_{\mathfrak X}(\underline a),\pi_{\mathfrak X}(\underline b)) \le \diam \Big( f_{b_{-1}} \circ f_{b_{-2}} \circ \dots \circ f_{b_{-k}}(X_{b_{-k}})  \Big)
\le \lambda^N \diam(X) = C d_-(\underline a, \underline b)^{\alpha}
$$
where $C=\diam(X)$ and $\alpha= - \log\lambda>0$. This proves that 
$\pi_{\mathfrak X}$ is $\alpha$-H\"older continuous, and finishes the proof of the theorem. 
\hfill $\square$

\section{Dynamics of contractive local IFSs}
\label{sec:orbits}
In this section we will describe the space of orbits for contractive local IFSs, which together with the code space, describes the dynamical behavior of the local attractor. This will prove Theorem~\ref{thmC}.

\subsection{Orbits of a contractive local IFS}
In order to ease the representation of the orbits of a point $x\in X$  we introduce the notation for compositions directed by a symbolic sequence as
$$
f_{\underline a, k}(x):= (f_{a_{k-1}}\circ \cdots \circ f_{a_{0}})(x),
$$
where $a=(a_n)_{n\ge 0} \in \Sigma$.
Moreover, even for $y=f_{\underline a, k}(x) \in \bigcup_{i=1,\ldots,n} X_i$ the future iterates $f_{\underline a, k+1}(x) = f_{j}(y)$ may only be defined for a certain subset of the collection of symbols $j$.
This shows the necessity to introduce  a more general structure which comprises finite orbits and choices of maps depending on the previous points. Let us be more precise.
The iterative process originated by a local IFS $R_{\mathfrak X}=(X_j, f_{j})_{1\le j \le n}$ is determined by:
\begin{enumerate}
\item continuous maps $f_{j}:X_j  \to X$, where $X_j\subset X$; 
\item a map $\Psi: X \to 2^{\{1,\ldots,n\}}$ that associates to each $x\in X$ all available choices of future iteration, that is, 
$\Psi(x)=\{1\le j \le n \colon x\in X_j\}$. 
   \end{enumerate}
In particular, for each sequence $\underline a \in \Sigma$ there are two possibilities for the admissible iterations:
\begin{enumerate}
  \item[(i)] ${\mathcal O}_{\underline a}(x_0)=(x_0, x_1, x_2, \dots, x_n)$ where $x_i=f_{\underline a,i}(x_0)$ for every $1\le i \le n$ but one has 
  $a_{n+1}\notin \Psi(x_n)$ (in this case we say that $x_n$ is an \emph{end point}\index{End point} for the $\underline a$-orbit of $x_0$);
  \smallskip
  \item[(ii)] ${\mathcal O}_{\underline a}(x_0)=(x_0, x_1, x_2, \dots, x_n,\dots)$ where $x_i=f_{\underline a,i}(x_0)$ for every $i\ge 1$.
\end{enumerate}

\medskip
The next proposition shows that all points in the attractor $A_{\mathfrak X}$ are characterized as limits of orbits with arbitrary large length. This will  allow one to derive the conclusion that finite cylinders that fail to intersect the attractor will eventually vanish under the iteration of the local operator. More precisely: 

\begin{proposition}\label{prop: infinite orbits attractor}
   A point $x$ belongs to the local attractor $A_{\mathfrak X}$ if and only if it is accumulated by to orbits with arbitrary large length. Moreover, if a sequence $\underline a\in \Sigma$ verifies $V_{[\underline a]_k}  \cap A_{\mathfrak X}=\emptyset$ then every point in $V_{[\underline a]_k}$ has a finite orbit and is an endpoint.
\end{proposition}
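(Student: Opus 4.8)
The plan is to prove the proposition in two parts, matching its two assertions. For the first (characterization of $A_{\mathfrak X}$ as the accumulation set of arbitrarily long orbits), I would use the already-established identity $A_{\mathfrak X} = \bigcap_{i\ge 0} F_{\mathfrak X}^i(X) = \lim_{i\to\infty} F_{\mathfrak X}^i(X)$ from Theorem~\ref{thm:topological}(2) together with the explicit description $F_{\mathfrak X}^k(X) = \bigcup_{\underline b} V_{[\underline b]_k}$, where each $V_{[\underline b]_k} = f_{b_{-1}}\circ\cdots\circ f_{b_{-k}}(X_{b_{-k}})$ is exactly the set of endpoints of $\underline b$-directed orbits of length $k$. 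The key observation is that a point lies in $F_{\mathfrak X}^k(X)$ precisely when it is the $k$-th iterate of some admissible finite orbit, i.e. it is the endpoint of an orbit of length $k$ (in the terminology of case (i)-(ii) preceding the statement).

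\medskip
\noindent
For the forward direction, if $x\in A_{\mathfrak X}$ then $x\in F_{\mathfrak X}^k(X)$ for every $k$, so for each $k$ there is an admissible word $(b_{-k},\dots,b_{-1})$ and a point in $X_{b_{-k}}$ whose image under $f_{b_{-1}}\circ\cdots\circ f_{b_{-k}}$ equals $x$; reading this composition forward produces an admissible orbit of length $k$ ending at $x$, so in particular $x$ is accumulated by (indeed is the endpoint of) orbits of arbitrarily large length. For the converse, suppose $x$ is accumulated by endpoints $x^{(m)}$ of orbits whose lengths $k_m\to\infty$. Since the endpoint of an orbit of length $k$ lies in $F_{\mathfrak X}^k(X)$ and the sets $(F_{\mathfrak X}^i(X))_{i\ge 0}$ are nested decreasing (Proposition~\ref{prop:set prop F_loc}(2)), each $x^{(m)}$ lies in $F_{\mathfrak X}^{k_m}(X)\subseteq F_{\mathfrak X}^{\ell}(X)$ for all $\ell\le k_m$. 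Fixing $\ell$ and letting $m\to\infty$, closedness of the compact set $F_{\mathfrak X}^{\ell}(X)$ gives $x\in F_{\mathfrak X}^{\ell}(X)$; since $\ell$ is arbitrary, $x\in\bigcap_{\ell\ge 0} F_{\mathfrak X}^{\ell}(X) = A_{\mathfrak X}$.

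\medskip
\noindent
For the second assertion I would argue by contraposition on the orbit structure. Suppose $\underline a\in\Sigma$ satisfies $V_{[\underline a]_k}\cap A_{\mathfrak X}=\emptyset$, and take any $z\in V_{[\underline a]_k}$. The claim is that $z$ cannot be extended to an orbit of unbounded length. Indeed, if $z$ admitted orbits of arbitrarily large length (that is, if there were admissible extensions $f_{a_k}, f_{a_{k+1}},\dots$ of all lengths), then by the first part of the proposition $z$ would belong to $A_{\mathfrak X}$, contradicting $z\in V_{[\underline a]_k}\subset X\setminus A_{\mathfrak X}$. Hence every orbit through $z$ must terminate after finitely many steps, i.e. $z$ reaches an endpoint in the sense of case (i) above, where no admissible continuation exists.

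\medskip
\noindent
\textbf{The main obstacle} I anticipate is making precise the phrase ``accumulated by orbits with arbitrarily large length'' and its equivalence with membership in the nested sets $F_{\mathfrak X}^k(X)$. The subtlety is that an orbit of length $k$ is a finite sequence, and one must be careful to identify ``a point accumulated by endpoints of orbits of length $\ge k$'' with ``a point in $F_{\mathfrak X}^k(X)$''; the compactness of each $F_{\mathfrak X}^k(X)$ (guaranteed since $X_j$ are closed and $f_j$ continuous, cf. Proposition~\ref{prop:set prop F_loc}(5)) is what lets the accumulation argument close. A second delicate point is that in the converse direction one should not assume the accumulating orbits all follow a single symbolic sequence $\underline a$; they may be directed by different words $\underline a^{(m)}$, so the argument must go through the union $F_{\mathfrak X}^k(X)=\bigcup_{\underline b} V_{[\underline b]_k}$ rather than through any fixed cylinder, and it is precisely the monotonicity and compactness of the full iterates $F_{\mathfrak X}^k(X)$ — not of individual cylinders — that carries the proof.
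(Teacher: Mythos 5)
Your proof of the \emph{first} assertion is correct. The converse direction is the same as the paper's (endpoints of length-$k_m$ orbits lie in $F_{\mathfrak X}^{k_m}(X)$, then use nestedness and compactness of the iterates and $A_{\mathfrak X}=\bigcap_{\ell\ge 0}F_{\mathfrak X}^{\ell}(X)$). Your forward direction is actually more direct than the paper's: the paper routes through the code-space identity $\pi_{\mathfrak X}(\Sigma^-_{\mathfrak X})=A_{\mathfrak X}$ of Proposition~\ref{prop: properties code map} to produce orbits of length $k$ ending at points $x_k\to x$, whereas you observe that $x\in\bigcap_k F_{\mathfrak X}^k(X)$ makes $x$ itself the endpoint of an admissible orbit of every length. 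Both are valid, and yours avoids the Section~\ref{sec:codespace} machinery.

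The \emph{second} assertion is where there is a genuine gap. Your contrapositive argues: if $z\in V_{[\underline a]_k}$ admitted admissible \emph{forward} extensions of every length, then ``by the first part'' $z\in A_{\mathfrak X}$. But the first part characterizes membership in $A_{\mathfrak X}$ through orbits \emph{ending} at (or accumulating on) $z$ --- arbitrarily long admissible pasts --- not through continuations \emph{starting} at $z$. These are inequivalent: in a classical contractive IFS every point of $X$ has an infinite forward orbit while $A_R\subsetneq X$ in general. Worse, with the forward reading the conclusion you aim at can literally fail: take $X=[0,1]$, $f_1(x)=x/2$ on $X_1=[0,1/2]$ and $f_2\equiv 1/2$ on $X_2=\{1\}$. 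Then $A_{\mathfrak X}=\{0\}$, the length-one cylinder $V=f_2(X_2)=\{1/2\}$ is disjoint from $A_{\mathfrak X}$, yet its unique point has the infinite forward orbit $1/2,1/4,1/8,\dots$ under $f_1$. So the implication ``arbitrarily long forward extensions $\Rightarrow z\in A_{\mathfrak X}$'' is false, and your proof of the second claim proves nothing as written.

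What \emph{does} follow immediately from the first part --- and is what the paper intends, consistently with its discussion of inessential cylinders vanishing --- concerns pasts, not futures: no point of $V_{[\underline a]_k}$ can be the terminal point of admissible compositions of arbitrarily large length, since such a point would trivially be accumulated by endpoints of arbitrarily long orbits and hence lie in $A_{\mathfrak X}$. Equivalently, as $\operatorname{dist}(V_{[\underline a]_k},A_{\mathfrak X})>0$ by compactness and $F_{\mathfrak X}^n(X)\to A_{\mathfrak X}$ in the Hausdorff--Pompeiu metric by Theorem~\ref{thm:topological}, one gets $V_{[\underline a]_k}\cap F_{\mathfrak X}^n(X)=\emptyset$ for all large $n$, so every orbit reaching $V_{[\underline a]_k}$ has uniformly bounded length and terminates there. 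Your argument swapped past and future; repair it by replacing ``admissible extensions of all lengths'' with ``admissible histories of all lengths.''
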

\begin{proof}
Item (3) in Proposition~\ref{prop: properties code map} ensures that 
$\pi_{\mathfrak X}(\Sigma_{\mathfrak X}^-)= A_{\mathfrak X}$. Hence
if $x \in A_{\mathfrak X}$ then there exists $\underline b \in \Sigma_{\mathfrak X}^-$ so that 
$x =
\lim_{k\to\infty} f_{b_{-1}} \circ f_{b_{-2}} \circ \dots f_{b_{-k}}(X_{b_{-k}}).$ In particular, for each $k\ge 1$ there exists $y_k\in X_{b_{-k}}$ and an admissible iteration $(y_k, f_{b_{-k}}(y_k), \ldots, f_{b_{-1}} \circ f_{b_{-2}} \circ \dots f_{b_{-k}}(y_k))$ so that $x_k:=f_{b_{-1}} \circ f_{b_{-2}} \circ \dots f_{b_{-k}}(y_k)$ tends to $x$ as $k$ tends to infinity.
Conversely, assume that $x=\lim_{k\to\infty} x_k$ where, for each $k\ge 1$, there exists a block $(a_0^{(k)}, a_1^{(k)}, \dots, a_k^{(k)})$ such that  
$ 
x_k=f_{a_{k}^{(k)}} \circ \dots f_{a_{0}^{(k)}}(y_k)
    \in F_{\mathfrak X}^k(X).
$ 
As $A_{\mathfrak X}=\lim_{k\to\infty} F_{\mathfrak X}^k(X)$ (recall Theorem~\ref{thm:topological}) it follows immediately that $x\in A_{\mathfrak X}$.
The second statement is an immediate consequence of the latter. 
\end{proof}

The next proposition shows that there exist at least a point in the attractor with an infinite orbit, hence it belongs to $A^\infty_{\mathfrak X}$ (recall there may exist points in the attractor where such property fails, cf. Example~\ref{ex:shift}).

\begin{proposition}
	\label{prop:keyorbit}
	 Consider a local  IFS $R_{\mathfrak X}=(X_j,f_j)_{1\le j \le n}$. The following properties are equivalent:
     \begin{enumerate}
         \item the set $A_{\mathfrak X}$ is non-empty;
         \item there exists a point $x\in X$ 
         with an infinite orbit.
     \end{enumerate}
     In particular, if any of the conditions hold then  $A^\infty_{\mathfrak X} \neq \emptyset$. 
\end{proposition}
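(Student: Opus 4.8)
The plan is to prove the two implications separately, with the equivalence then yielding the final ``in particular'' clause once we observe that the point produced in the proof of $(1)\Rightarrow(2)$ actually lies in $A_{\mathfrak X}$ and has an infinite orbit, i.e. belongs to $A^\infty_{\mathfrak X}$. No contractivity is needed; we only use continuity of the $f_j$, closedness of the $X_j$, compactness of $X$, and the invariance $F_{\mathfrak X}(A_{\mathfrak X})=A_{\mathfrak X}$ from Proposition~\ref{prop: set property local attract}.

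For the easy direction $(2)\Rightarrow(1)$, suppose $(x_i)_{i\ge 0}$ is an infinite orbit, say $x_{i+1}=f_{a_i}(x_i)$ with $x_i\in X_{a_i}$. I would show by induction that $x_i\in F_{\mathfrak X}^i(X)$: indeed $x_0\in X$, and if $x_{i-1}\in F_{\mathfrak X}^{i-1}(X)\cap X_{a_{i-1}}$ then $x_i=f_{a_{i-1}}(x_{i-1})\in f_{a_{i-1}}\bigl(F_{\mathfrak X}^{i-1}(X)\cap X_{a_{i-1}}\bigr)\subseteq F_{\mathfrak X}^i(X)$. Hence every $F_{\mathfrak X}^i(X)$ is a nonempty compact set, so by Proposition~\ref{prop:set prop F_loc}(3) (applied to the $F_{\mathfrak X}$-positively invariant set $A_0=X$) the set $\bigcap_{i\ge 0}F_{\mathfrak X}^i(X)$ is a nonempty $F_{\mathfrak X}$-invariant compact set, and therefore $A_{\mathfrak X}\neq\emptyset$.

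For the main direction $(1)\Rightarrow(2)$, I would argue inside the compact product space $A_{\mathfrak X}\times\Sigma$, where $\Sigma=\{1,\dots,n\}^{\mathbb N_0}$. For each $\ell\ge 1$ define the ``admissible up to time $\ell$'' set
\[
E_\ell=\bigl\{(x,\underline a)\in A_{\mathfrak X}\times\Sigma \;:\; f_{\underline a,i}(x)\in X_{a_i}\ \text{for every}\ 0\le i\le \ell-1\bigr\}.
\]
Each $E_\ell$ is closed: for a fixed finite block $(a_0,\dots,a_{\ell-1})$ the conditions $f_{\underline a,i}(x)\in X_{a_i}$ are preimages of the closed sets $X_{a_i}$ under continuous maps and depend on $\underline a$ only through a clopen cylinder, and $E_\ell$ is the finite union of such pieces over all blocks. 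Clearly $E_{\ell+1}\subseteq E_\ell$. To see $E_\ell\neq\emptyset$ I would use the iterated invariance $A_{\mathfrak X}=F_{\mathfrak X}^\ell(A_{\mathfrak X})$: any $y\in A_{\mathfrak X}$ is the endpoint of a length-$\ell$ admissible orbit starting at some $z\in A_{\mathfrak X}$, and completing the realized symbol block arbitrarily gives a pair $(z,\underline a)\in E_\ell$ (alternatively one constructs such $z_\ell$ by extending a backward orbit $z_0=f_{c_1}(z_1),\,z_1=f_{c_2}(z_2),\dots$ inside $A_{\mathfrak X}$, each step possible by invariance). By the finite intersection property for nested nonempty closed subsets of the compact space $A_{\mathfrak X}\times\Sigma$, there exists $(x_*,\underline a_*)\in\bigcap_{\ell\ge 1}E_\ell$. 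This point satisfies $f_{\underline a_*,i}(x_*)\in X_{a_{*,i}}$ for every $i\ge 0$, so $\underline a_*$ realizes an infinite admissible orbit issuing from $x_*\in A_{\mathfrak X}$ (which automatically remains in $A_{\mathfrak X}$, since $x\in A_{\mathfrak X}\cap X_j$ forces $f_j(x)\in F_{\mathfrak X}(A_{\mathfrak X})=A_{\mathfrak X}$). Thus $x_*$ has an infinite orbit, proving $(2)$, and moreover $x_*\in A^\infty_{\mathfrak X}$ by the very definition~\eqref{eq:maxinvA}, which gives the concluding assertion $A^\infty_{\mathfrak X}\neq\emptyset$.

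The main obstacle is precisely the passage from arbitrarily long \emph{finite} admissible orbits to a single \emph{infinite} one; this is where a naive limit fails, since the length-$\ell$ orbits a priori have different starting points and symbol sequences. The resolution is the single compactness argument above, which simultaneously selects a common starting point and an infinite symbol sequence (packaging together what would otherwise be a Cantor-intersection step and a König's-lemma step). The two structural inputs that make it work are that admissibility for finitely many steps is a \emph{closed} condition (because the $X_j$ are closed and the $f_j$ continuous) and that forward orbits based in $A_{\mathfrak X}$ cannot escape $A_{\mathfrak X}$, by its invariance.
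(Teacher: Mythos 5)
Your proof is correct, and in substance it follows the same two pillars as the paper's: the direction $(2)\Rightarrow(1)$ is identical (orbit points witness $F_{\mathfrak X}^i(X)\neq\emptyset$, then Cantor intersection via Proposition~\ref{prop:set prop F_loc}), and for $(1)\Rightarrow(2)$ both arguments first use the backward invariance $A_{\mathfrak X}=\bigcup_j f_j(A_{\mathfrak X}\cap X_j)$, iterated, to manufacture admissible finite orbits of every length inside $A_{\mathfrak X}$, and then invoke compactness to pass to an infinite orbit. The difference is purely in how the compactness step is organized: the paper lists a sequence $(w_i)$ of finite orbits of increasing length, runs a pigeonhole/diagonal extraction to stabilize the symbols $a_j^*$ coordinate by coordinate, takes an accumulation point of the starting points in the compact set $A_{\mathfrak X}$, and then propagates admissibility in the limit using continuity of the $f_j$ and closedness of the $X_j$; you instead define the nested sets $E_\ell\subset A_{\mathfrak X}\times\Sigma$ of pairs admissible for $\ell$ steps, observe they are closed (for exactly the same two reasons: continuity of $f_j$, closedness of $X_j$) and nonempty (by the same backward-invariance construction), and conclude by the finite intersection property in the compact product. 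Your packaging is shorter and arguably cleaner — it eliminates the subsequence bookkeeping and makes explicit that the diagonal process and the accumulation-point step are one and the same use of compactness of $A_{\mathfrak X}\times\Sigma$ — while the paper's version is more hands-on and constructive about how the limiting orbit arises. Your closing observations are also accurate: the forward orbit of $x_*$ stays in $A_{\mathfrak X}$ by invariance, so $x_*\in A^\infty_{\mathfrak X}$ in the sense of \eqref{eq:maxinvA}, matching the paper's ``in particular'' clause, and indeed no contractivity is used anywhere (consistent with the proposition being stated for a general local IFS).
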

\begin{proof}
First we prove the equivalence.

\medskip
\noindent $(1)\Rightarrow (2)$
\smallskip

Define, for each $x \in  A_{\mathfrak X}$,
	\[\Gamma(x)= \bigcup_{k\ge 1} \{ ((x_0,a_0), (x_1,a_1), \dots , (x_{k-1}, a_{k-1})) | x_0=x, \; a_{j}\in \Psi(x_j) , \;  0 \leq j \leq k-1\},\]
	the set of finite orbits starting at $ x_0=x$. Set also $\Gamma=\bigcup_{x\in  A_{\mathfrak X}} \Gamma(x)$.
	
	We claim that there must exist points $x \in  A_{\mathfrak X}$ such that $\Gamma(x)$ contains orbits with arbitrarily large length.  Indeed, as $A_{\mathfrak X} \neq \emptyset$ one can pick a point $z \in A_{\mathfrak X} = \bigcup_{1\le j \le n} f_{j}(A_{\mathfrak X} \cap X_{j})$ so that there exists $x_0 \in A_{\mathfrak X} $ and $b_0 \in \{1,\ldots,n\}$ with $f_{b_0}(x_0)= z$. Repeating this procedure, there exists $x_1 \in A_{\mathfrak X} $ and $b_1 \in \{1,\ldots,n\}$ with $f_{b_1}(x_1)= x_0$.
	Recursively, for each $m\ge 1$ one can find,  $1\le b_0,b_1, \dots, b_{m-1} \le n$ with
	\[f_{b_0}(f_{b_1}(\cdots (f_{ b_{m-1}}(x_{m-1}))))=z, \text{ where } x_{m-1} \in  A_{\mathfrak X}\]
	that is, $( (x_{m-1}, b_{m-1}), \dots,  (x_{1},b_{1}),  (x_0,b_0))  \in \Gamma(x_{m-1})$.
	
	Using this property one can choose a sequence $(w_{i})_{i\ge 1}$ in $\Gamma$ where each $w_i$ is a finite orbit and whose lengths for an increasing sequence $k_1 < k_2< k_3 <\ldots$. One writes
	\[w_1:=((x_0^{1},a_0^{1}), (x_{1}^{1},a_{1}^{1}), \dots , (x_{k_1}^{1}, a_{k_1}^{1}))  \in \Gamma(x_0^{1});\]
	\[w_2:=((x_0^{2},a_0^{2}), (x_{1}^{2},a_{1}^{2}), \dots , (x_{k_2}^{2}, a_{k_2}^{2}))  \in \Gamma(x_0^{2});\]
	and so on. This will be used to construct a sequence $\underline a :=(a_{0}^*, a_{1}^*, \ldots) \in \Sigma$. This is done by induction. For $j=0$ we notice that $(a_0^{i})$ is a sequence taking finitely many values, thus by the pigeonhole principle at least one of them must repeat infinitely many times. Take $a_{0}^*$ as one of such indices. Up to consider a subsequence of $(w_{i})_{i\ge 1}$ one can assume that $a_0^i=a_0^*$ for every $i\ge 1$. Following this diagonal process we will find by induction a subsequence of $(w_{i})_{i\ge 1}$ (we keep denoting it by $(w_{i})_{i\ge 1}$ for notational simplicity) so that 
	where $a_j^{i} = a_{j}^*$ for every  $i$.
	
	Now, as the local attractor  $ A_{\mathfrak X}$ is compact one can  find a point  $x_* \in  A_{\mathfrak X}$ 
    as accumulation point of $(x_0^{i})_{i\ge 1}$.  Moreover, using that every function  $f_{a_{j}^*}$ is continuous and each $X_{a_{j}^*}$  is closed, we obtain $a_{j}^*\in \Psi(x_j^*)$ where $x_{j+1}^*= f_{a_{j}^*}(x_{j}^*), \; j \geq 0$. In conclusion
	\[((x_0^*,a_0^*), (x_1^*,a_1^*) , (x_{2}^*, a_{2}^*), \dots) \in \Omega\]
    and $x_0^* \in A^\infty_{\mathfrak X} \subset A_{\mathfrak X}$.

\medskip
\noindent $(2)\Rightarrow (1)$
\smallskip

Suppose that $x \in X$ is such that $x_0=x$ and $x_{i}=f_{a_{i}}(x_{i-1})$ for $i \geq 1$. We notice that $F_{\mathfrak X}^0(X)=X$ and
    \[x_{i}=f_{a_{i}}(f_{a_{i-1}}(\cdots (f_{a_{1}}(x)))) \in F_{\mathfrak X}^i(X), \quad i \geq 1.\]
As $F_{\mathfrak X}^k(X)$ is always a non-empty compact set for each $k \geq 0$ then  
$A_{\mathfrak X} = \bigcap_{k \geq 0} F_{\mathfrak X}^k(X) \neq\emptyset$ 
by the Cantor intersection theorem. 

\medskip
This prove the equivalence between items (1) and (2).  The second statement in the proposition follows from the proof $(1)\Rightarrow (2)$, as the existence of a point with an infinite orbit led to the construction of a point in $A^\infty_{\mathfrak X} $.
	This completes the proof of the proposition.	  	  
\end{proof}

\subsection{Proof of Theorem~\ref{thmC}}
The first claim in the theorem follows from Proposition~\ref{prop:keyorbit}. 
By construction of $\Omega$ all points in the range of $\pi_{0,X}$ 
can be iterated indefinitely by the contractive local IFS, and 
$\displaystyle\lim_{i \to\infty} \text{dist}_H(x_i,A_{\mathfrak X})=0$
for each $((x_i,a_i))_{i\ge 0}\in \Omega$.
Now, consider the enlarged shift map
$\bar \sigma: \Omega \to \Omega$ defined by ~\eqref{eq:extendedsigma}
and the usual shift $\sigma:\Sigma^+ \to \Sigma^+$.
It is simple to check that the diagram
\begin{equation}
    \label{eq:dyn-diagram}
\begin{array}{rcl}
    \Omega & \xrightarrow[]{\bar \sigma} & \Omega\\
    \pi_{\Sigma^+} \downarrow   &  & \downarrow \pi_{\Sigma^+} \\
     \Sigma^+ & \xrightarrow[\sigma]{} & \Sigma^+
\end{array}
\end{equation}
commutes (that is, $\pi\circ \bar\sigma = \sigma\circ \pi$),
where the projection
$\pi_{k,\Sigma^+}(((x_i,a_i))_{i\ge 0})=a_k$
and 
$$
\pi_{\Sigma^+}=(\pi_{0,\Sigma^+},\pi_{1,\Sigma^+}, \pi_{2,\Sigma^+}, \dots).
$$ 
Let us use the following auxiliary result:
\begin{lemma}
    \label{le:inv-attractor}
    Let $R_{\mathfrak X}=(X_j, f_{j})_{1\le j \le n}$ be a constractive local IFS and assume that $A_{\mathfrak X}\neq \emptyset$.
    The following properties hold:
    \begin{enumerate}
        \item $  
    \Omega_\infty :=  
    \bigcap_{k\ge 0} \bar\sigma^k(\Omega)
    $
    is a $\bar \sigma$-invariant subset of $(X\times \{1,2, \dots, n\})^{\mathbb N}$;
    \smallskip
    \item $\Sigma_\infty:=\pi_{\Sigma^+}(\Omega_\infty)$ is a $\sigma$-invariant subset of $\Sigma^+$.
    \end{enumerate} 
     Moreover, if $\Omega$ is compact then both sets $\Omega_\infty$ and $\Sigma_\infty$ are compact.
\end{lemma}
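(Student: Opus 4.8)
The plan is to first record the elementary but crucial fact that $\bar\sigma(\Omega)\subseteq\Omega$: if $((x_i,a_i))_{i\ge0}\in\Omega$, then its image under $\bar\sigma$ still satisfies the defining relations of \eqref{eq:def-enlarged-Omega}, since those relations hold for every index $i\ge0$ and in particular survive dropping the zeroth coordinate. Consequently the forward images form a nested decreasing family $\Omega\supseteq\bar\sigma(\Omega)\supseteq\bar\sigma^2(\Omega)\supseteq\cdots$, so that $\Omega_\infty=\bigcap_{k\ge0}\bar\sigma^k(\Omega)$ is a genuine decreasing intersection. I would also observe at this stage that $\Omega$ is closed in the compact product space $(X\times\{1,\dots,n\})^{\mathbb N}$: a limit of orbits is again an orbit because each $X_j$ is closed and each $f_j$ continuous, while the symbol set is discrete and finite. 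Hence $\Omega$ is automatically compact, which both places us under the hypothesis of the last claim and makes available the compactness arguments used below, so there is no circularity in invoking it while proving items (1) and (2).

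The positive invariance $\bar\sigma(\Omega_\infty)\subseteq\Omega_\infty$ is a direct set-theoretic computation: if $\omega\in\bar\sigma^k(\Omega)$ for all $k$, then $\bar\sigma(\omega)\in\bar\sigma^{k+1}(\Omega)$ for all $k$, and together with $\bar\sigma(\omega)\in\bar\sigma(\Omega)\subseteq\Omega$ this places $\bar\sigma(\omega)$ in every $\bar\sigma^m(\Omega)$, hence in $\Omega_\infty$. The reverse inclusion $\Omega_\infty\subseteq\bar\sigma(\Omega_\infty)$ is the genuinely nontrivial point, and it is where I expect the main obstacle: for a given $\omega\in\Omega_\infty$ I must produce a \emph{single} preimage $\eta\in\Omega_\infty$ with $\bar\sigma(\eta)=\omega$, whereas a priori each $k$ only furnishes a preimage of $\omega$ inside $\bar\sigma^{k-1}(\Omega)$, with possibly incompatible leading coordinate. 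The remedy is compactness. For each $k\ge1$ the set $C_k:=\bar\sigma^{k}(\Omega)\cap\bar\sigma^{-1}(\{\omega\})$ is a closed, hence compact, subset of $\Omega$; it is nonempty precisely because $\omega\in\bar\sigma^{k+1}(\Omega)$; and the $C_k$ are nested decreasing since $\bar\sigma^{k+1}(\Omega)\subseteq\bar\sigma^{k}(\Omega)$. By the Cantor intersection theorem $\bigcap_{k\ge1}C_k\neq\emptyset$, and any $\eta$ in this intersection lies in every $\bar\sigma^k(\Omega)$, i.e. $\eta\in\Omega_\infty$, while $\bar\sigma(\eta)=\omega$. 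This yields $\bar\sigma(\Omega_\infty)=\Omega_\infty$ and proves item (1).

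For item (2) I would simply transport the invariance through the commuting diagram \eqref{eq:dyn-diagram}, namely $\pi_{\Sigma^+}\circ\bar\sigma=\sigma\circ\pi_{\Sigma^+}$. Applying $\pi_{\Sigma^+}$ to the identity $\bar\sigma(\Omega_\infty)=\Omega_\infty$ and using the diagram gives
$$
\sigma(\Sigma_\infty)=\sigma\bigl(\pi_{\Sigma^+}(\Omega_\infty)\bigr)=\pi_{\Sigma^+}\bigl(\bar\sigma(\Omega_\infty)\bigr)=\pi_{\Sigma^+}(\Omega_\infty)=\Sigma_\infty,
$$
so $\Sigma_\infty$ is $\sigma$-invariant.

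Finally, for the compactness statement, assuming $\Omega$ compact, each $\bar\sigma^k(\Omega)$ is compact because $\bar\sigma$ is continuous in the product topology, so $\Omega_\infty$ is an intersection of nested compact sets and is therefore compact; it is moreover nonempty, since $\Omega\neq\emptyset$ by Proposition~\ref{prop:keyorbit} and an intersection of nested nonempty compacts is nonempty. Likewise $\Sigma_\infty=\pi_{\Sigma^+}(\Omega_\infty)$ is the continuous image of a compact set, hence compact. The only delicate ingredient in the whole argument is the compactness-based surjectivity step securing the reverse inclusion in item (1); everything else is formal.
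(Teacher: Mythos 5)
Your proof is correct and rests on the same mechanism as the paper's --- nestedness of $(\bar\sigma^k(\Omega))_{k\ge0}$ plus compactness --- but it unpacks the one step the paper compresses, and in doing so repairs a tacit assumption. The paper disposes of the equality $\bar\sigma(\Omega_\infty)=\Omega_\infty$ in one line, writing $\bar\sigma\bigl(\bigcap_{k}\bar\sigma^k(\Omega)\bigr)=\bar\sigma\bigl(\lim_{k}\bar\sigma^k(\Omega)\bigr)=\lim_{k}\bar\sigma^{k+1}(\Omega)=\Omega_\infty$, i.e.\ invoking the standard fact that a continuous map commutes with a decreasing intersection of compact sets; your Cantor-intersection argument over the fibers $C_k=\bar\sigma^{k}(\Omega)\cap\bar\sigma^{-1}(\{\omega\})$ is exactly the proof of that fact, specialized to a single point $\omega$, so the two routes are mathematically the same at different levels of granularity (and your verification that the $C_k$ are nonempty, compact and nested is correct). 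What your write-up genuinely adds is the explicit observation that $\Omega$ is closed in the compact product $(X\times\{1,2,\dots,n\})^{\mathbb N}$ --- using closedness of the $X_j$, continuity of the $f_j$, and discreteness of the symbol set --- hence automatically compact. The paper states compactness of $\Omega$ only as a hypothesis in the ``moreover'' clause, yet its proof of item (1) already needs it, since the identity $\bar\sigma(\lim_k\bar\sigma^k(\Omega))=\lim_k\bar\sigma^{k+1}(\Omega)$ can fail for decreasing sequences of non-compact sets; your remark removes this apparent circularity and shows the hypothesis of the final claim is always satisfied in this setting. Item (2) via the commuting diagram \eqref{eq:dyn-diagram} and the compactness/nonemptiness conclusions via Proposition~\ref{prop:keyorbit} coincide with the paper's argument.
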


\begin{proof}
By construction $\bar \sigma(\Omega)\subset \Omega$. In particular the sequence $(\bar \sigma^k(\Omega))_{k\ge 0}$ 
is nested and, by continuity of $\bar \sigma$, 
$$
\bar \sigma \Big(\bigcap_{k\ge 0} \bar\sigma^k(\Omega)\Big)
=\bar \sigma \Big(\lim_{k\to\infty} \bar\sigma^k(\Omega)\Big)
=\lim_{k\to\infty} \bar\sigma^{k+1}(\Omega)
=\bigcap_{k\ge 0} \bar\sigma^k(\Omega).
$$
This proves that $\Omega_\infty:=\bigcap_{k\ge 0} \bar\sigma^k(\Omega)$ is a $\bar\sigma$-invariant subset. 

The $\sigma$-invariance of the set $\Sigma_\infty$ follows from the commutative diagram ~\eqref{eq:dyn-diagram}.
Finally, if $\Omega$ is a compact set then $\Omega_\infty$ is a  nested and non-empty intersection of compact sets (cf. Proposition~\ref{prop:keyorbit}),   hence compact, while $\Sigma_\infty$ is the image of such compact set under the continuous map $\pi_{\Sigma^+}$. This finishes the proof of the lemma.   
\end{proof}

Given $x\in X$ consider the space 
$$
\Omega^x_\infty = \pi_{0,X}^{-1}(\{x\})= \Big\{ ((x_i,a_i))_{i\ge 0} \in \Omega_\infty \colon \pi_{0,X}(((x_i,a_i))_{i\ge 0})=x \Big\}
$$
which represents all possible orbits $(f_{a_{k}}\circ \ldots f_{a_{1}} \circ f_{a_{0}}(x))_{k\ge 0}$ of the point $x$, for all possible choices of paths $\underline a \in \Sigma^+$.

We now proceed to define the space of two-sided sequences in $\Sigma^-_{\mathfrak X} \times \Sigma_\infty$ so that the point $x\in X$ determined by the past coordinates (code space) has an admissible future coordinates. More precisely, let us define the set
\begin{equation}
    \label{defOmegax}
\widehat \Sigma_{\mathfrak X}=\Big\{(\underline b, \underline a) \in \Sigma^-_{\mathfrak X} \times \Sigma_\infty \colon \underline a \in \pi_{\Sigma^+}(\Omega^x_\infty) \; \text{where} \; x=\pi_{\mathfrak X}(\underline b) \Big\}.
\end{equation}
By construction $\pi_{\mathfrak X}\circ\pi_{\Sigma^- }(\widehat \Sigma_{\mathfrak X})=A^\infty_{\mathfrak X}$,  
hence $\widehat\Sigma_{\mathfrak X}$ is non-empty as 
$A^\infty_{\mathfrak X}\neq\emptyset$. 
In order to finish the proof of Theorem~\ref{thmC} it remains to prove the following:

\begin{proposition}
    \label{prop:shiftwelldef}
The shift $\widehat \sigma \colon \widehat \Sigma_{\mathfrak X} \to \widehat \Sigma_{\mathfrak X}$ given by
$$
\widehat \sigma ( \dots , b_{-3}, b_{-2}, b_{-1}, \boxed{a_0}, a_1, a_2, a_3,\dots)
    = 
     ( \dots , b_{-2}, b_{-1}, a_0, \boxed{a_1}, a_2, a_3, a_4, \dots)
$$
is well defined. Moreover, for every $(\underline b, \underline a) \in \widehat\Sigma_{\mathfrak X}$,
$$
\widehat \sigma(\underline b, \underline a)=(\underline b \ast a_0, \sigma(\underline a)) = (\tau_{a_0}(\underline b), \sigma(\underline a)).
$$
\end{proposition}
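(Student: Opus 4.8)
The plan is to prove the statement in two stages: first, that $\widehat\sigma$ is a genuine self-map of $\widehat\Sigma_{\mathfrak X}$ (this is the content of ``well defined'', since on the level of two-sided sequences $\widehat\sigma$ is manifestly single-valued as a rule), and second, that it coincides with the assignment $(\underline b,\underline a)\mapsto(\tau_{a_0}(\underline b),\sigma(\underline a))$. I would begin by unpacking the defining condition \eqref{defOmegax}: a pair $(\underline b,\underline a)\in\widehat\Sigma_{\mathfrak X}$ records a base point $x=\pi_{\mathfrak X}(\underline b)\in A^\infty_{\mathfrak X}$ together with an infinite admissible orbit $\omega=((x_i,a_i))_{i\ge 0}\in\Omega^x_\infty\subset\Omega_\infty$ satisfying $x_0=x$, $x_{i+1}=f_{a_i}(x_i)$ and $\pi_{\Sigma^+}(\omega)=\underline a$. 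In particular, by the defining condition of $\Omega$ in \eqref{eq:def-enlarged-Omega}, membership $\omega\in\Omega$ forces $x_i\in X_{a_i}$ for every $i\ge 0$; reading this at $i=0$ gives $x\in X_{a_0}$, that is, $a_0\in\Psi(x)$.

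For the past coordinate, I would invoke item (2) of Proposition~\ref{prop: properties code map}: since $\underline b\in\Sigma^-_{\mathfrak X}$ and $a_0\in\Psi(\pi_{\mathfrak X}(\underline b))$, the concatenation $\underline b\ast a_0=\tau_{a_0}(\underline b)$ again lies in $\Sigma^-_{\mathfrak X}$ and satisfies $\pi_{\mathfrak X}(\underline b\ast a_0)=f_{a_0}(\pi_{\mathfrak X}(\underline b))=f_{a_0}(x)=x_1$. This identifies the new base point of the shifted sequence as $x_1$, exactly the second point of the orbit $\omega$.

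For the future coordinate, I would push $\omega$ forward by the enlarged shift, $\bar\sigma(\omega)=((x_{i+1},a_{i+1}))_{i\ge 0}$. By the $\bar\sigma$-invariance of $\Omega_\infty$ established in Lemma~\ref{le:inv-attractor}, the element $\bar\sigma(\omega)$ again belongs to $\Omega_\infty$; its base point is $x_1$ and, by the commuting diagram \eqref{eq:dyn-diagram}, its symbolic projection is $\pi_{\Sigma^+}(\bar\sigma(\omega))=\sigma(\underline a)$. Hence $\sigma(\underline a)\in\pi_{\Sigma^+}(\Omega^{x_1}_\infty)$. Combining the two coordinates and recalling $x_1=\pi_{\mathfrak X}(\underline b\ast a_0)$, the pair $(\underline b\ast a_0,\sigma(\underline a))$ meets the requirement in \eqref{defOmegax}, so it lies in $\widehat\Sigma_{\mathfrak X}$. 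Since the shift of $(\dots,b_{-1},\boxed{a_0},a_1,\dots)$ is precisely $(\dots,b_{-1},a_0,\boxed{a_1},\dots)$, whose past is $\underline b\ast a_0$ and whose future is $\sigma(\underline a)$, this simultaneously proves the asserted formula $\widehat\sigma(\underline b,\underline a)=(\tau_{a_0}(\underline b),\sigma(\underline a))$.

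The step I expect to be the crux is the \emph{coherence} between the two coordinates after shifting: one must check that the new past $\underline b\ast a_0$ encodes, under $\pi_{\mathfrak X}$, exactly the new base point $x_1=f_{a_0}(x)$ of the shifted orbit, so that the shifted pair respects the compatibility constraint linking $\Sigma^-_{\mathfrak X}$ and $\Sigma_\infty$ in \eqref{defOmegax}. This coherence is delivered by Proposition~\ref{prop: properties code map}(2), whose validity rests on $\pi_{\mathfrak X}$ being single-valued, which is exactly where the contractivity hypothesis enters. The remaining ingredients --- the admissibility $a_0\in\Psi(x)$ and the $\bar\sigma$-invariance of $\Omega_\infty$ --- are then routine.
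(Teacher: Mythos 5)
Your proposal is correct and follows essentially the same route as the paper's proof: extract $a_0\in\Psi(x)$ from the admissibility of the orbit underlying $\underline a\in\pi_{\Sigma^+}(\Omega^x_\infty)$, apply Proposition~\ref{prop: properties code map}(2) to get $\underline b\ast a_0\in\Sigma^-_{\mathfrak X}$ with $\pi_{\mathfrak X}(\underline b\ast a_0)=f_{a_0}(x)$, and shift the orbit forward to place $\sigma(\underline a)$ in $\pi_{\Sigma^+}(\Omega^{f_{a_0}(x)}_\infty)$. Your explicit appeal to Lemma~\ref{le:inv-attractor} and the commuting diagram \eqref{eq:dyn-diagram} merely spells out what the paper dismisses as immediate, so there is no substantive difference.
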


\begin{proof}
    It is enough to show that for each two-sided sequence
    $(\underline b, \underline a)\in \widehat \Sigma_{\mathfrak X}$ the sequence $(\underline b \ast a_0,\sigma (\underline a))$ also belongs to $\widehat \Sigma_{\mathfrak X}$. Indeed, fix an arbitrary $(\underline b, \underline a)\in \widehat \Sigma_{\mathfrak X}$ and write $x=\pi_{\mathfrak X}(\underline b)$. By definition (recall ~\eqref{defOmegax}), as $\underline a\in \pi_{\Sigma^+}(\Omega^x_\infty)$ one has that $a_0\in \Psi(x)$. Moreover, using item (2) in Proposition~\ref{prop: properties code map}, 
$$
f_{a_0}(x)=f_{a_0}(\pi_{\mathfrak X}(\underline b))
= \pi_{\mathfrak X}(\underline b\ast a_0) 
$$
hence $\underline b\ast a_0\in \Sigma^-_{\mathfrak X}$. Furthermore, as $\underline a\in \pi(\Omega^x_\infty)$ then the orbit
$$
x_{i}=f_{a_i}(x_{i-1})
$$
is well defined for every $i\ge 1$, hence it follows immediately that
$\sigma (\underline a)\in \Omega^{f_{a_0}(x)}_\infty. $ As the last claim is immediate, this proves the proposition. 
\end{proof}

\medskip

\section{On a skew dynamical structure for local IFS}\label{sec:skewtype}

This section is devoted to the proof of Theorem~\ref{thm:D}.
Consider a compact metric space $(X,d)$, a finite collection of closed subsets $\mathfrak X =(X_j)_{1\le j \le n}$ of $X$ and a  contractive local IFS $R_{\mathfrak X}=(X_j,f_j)_{1\le j\le n}$.

For formal purposes one can consider a slightly different dynamical system written as $R_{\mathfrak X^{e}}:=(X_j,f_j)_{0\le j\le n}$ with the following collection of  subsets $\mathfrak X^{e} =(X_j^{e})_{0\le j \le n}$ of $X$ where $X_j^{e}:= X_j,\; 1 \le  j\le n$ and $X_0^{e} := X \backslash \bigcup_{1 \le  j\le n} X_j$ and $f_0(x):=x, \forall x \in X_0^{e}$. One denote the local IFS $R_{\mathfrak X^{e}}$ as the natural extension of $R_{\mathfrak X}$.  

\begin{remark}
We note that the local IFS $\mathfrak X^{e}$ is not contractive because $f_0$ is the identity and $X_0^{e}$ may not be a closed set. In particular, the local fractal operator aassociated to $\mathfrak X^{e}$ may not preserve compact sets. This is not a problem because we are not interested in its dynamics. We only use it as a tool to represent the whole set of orbits of the original local IFS in a useful manner.     
\end{remark}

The main property of the natural extension is that $\bigcup_{0 \le  j\le n} X_j^{e} = X$, therefore all positive semi-orbits of $R_{\mathfrak X^{e}}$ are infinite. More precisely, the new address function is given by
\[\Psi^{e}(x)=
\begin{cases}
    \begin{array}{ll}
\Psi(x)          & , \text{ for } x \in  \bigcup_{1 \le  j\le n} X_j \\
        0 & , \text{ otherwise}
    \end{array}.
\end{cases}
\]

In this way, if 
\[((x_0,a_0), (x_1,a_1), \ldots, (x_{k-1},a_{k-1}) )\]
is a finite orbit of $R_{\mathfrak X}$, that is, $\Psi(x_{k})=\emptyset$ or equivalently $\displaystyle x_{k}= f_{a_{k-1}} (x_{k-1}) \in X/ \bigcup_{1 \le  j\le n} X_j = X_0^{e}$ is an endpoint,  then,  $\Psi^{e}(x_{k-1})=0$ and the orbit continues for $R_{\mathfrak X^{e}}$, given by
\[((x_0,a_0), (x_1,a_1), \ldots, (x_{k-1},a_{k-1}) , (x_{k},0), (x_{k},0),  (x_{k},0),  (x_{k},0),  (x_{k},0), \ldots).\]

We now consider  a space big enough to contain all the finite and infinite orbits of  $R_{\mathfrak X}$ and, at the same time, the code space, denoted the extended shift space 
\[\Omega^{e}:= (X \times \{0,1,\ldots,n\})^{\mathbb{Z}}\]
usually decomposed in past and future sequences, respectively, 
\[ \Omega^{e}_{-}:=(X \times \{0,1,\ldots,n\})^{-\mathbb{N}} 
\text{ and }
\Omega^{e}_{+}:=(X \times \{0,1,\ldots,n\})^{\mathbb{N}_0}.\]
The same can be defined for the non-extended version
\[ \Omega_{-}:=(X \times \{1,\ldots,n\})^{-\mathbb{N}} 
\text{ and }
\Omega_{+}:=(X \times \{1,\ldots,n\})^{\mathbb{N}_0}.\]
It is useful to represent this spaces as a direct sum
\[\Omega^{e}= \Omega^{e}_{-} \oplus \Omega^{e}_{+}:=\{(z|w) \colon z \in \Omega^{e}_{-}  \text{ and }  w \in \Omega^{e}_{+}\}\]
and define the natural projections $\pi^{-}: \Omega^{e} \to \Omega^{e}_{-}$ and $\pi^{+}: \Omega^{e} \to \Omega^{e}_{+}$ defined by  $\pi^{-}(z|w) = z$ and  $\pi^{+}(z|w) = w$, for all $(z|w)  \in \Omega^{e} $.

Define 
$\Sigma^{-}_{e}=\{0,1,  \dots, n\}^{-\mathbb N}$ endowed with the metric
$$ 
d_-(\underline a, \underline b) = \exp\Big(\,-\sup\{N\ge 1\colon a_j = b_j, \;\text{for every}\; -N\le j \le -1 \}\Big)
$$ 
for every $\underline a, \underline b\in \Sigma^-$, where $\sup \emptyset =0$. Analogously, we define $ \Sigma^{-}=\{1,2, \dots, n\}^{-\mathbb N}$ with the same metric structure. Then, naturally,  $\Sigma^- \subset \Sigma^{-}_{e}$.

It is also important to define some additional projections
\[\pi_{k,\Sigma^{-}} (z|w):= a_{-k}\in\{0,1,\ldots,n\} , \; k \le -1; \]
\[\pi_{\Sigma^{-}}(z|w):= (\ldots, a_{-2},a_{-1} ) \in \Sigma^{-}_{e};\]
\[\pi_{k,X} (z|w):= x_{k}\in X, \; k \in \mathbb{Z} ;\]
\[\pi_{X} (z|w):=(x_0, x_1, x_{2}  \ldots)\in X^{\mathbb{N}_0},\]
where $(z|w):=((\ldots, (x_{-2},a_{-2}), (x_{-1},a_{-1}) )| ((x_0,a_0), (x_1,a_1),(x_{2},a_{2}) ,   \ldots)) \in \Omega^{e}$.
Note that  $\pi_{k,X} (z|w) = \pi_{0,X}(\sigma^{k} (z|w))$ so $\pi_{0,X}$ is the only necessary projection on $X$.

Finally, we define the double-sided shift map $\sigma: \Omega^{e} \to \Omega^{e}$ by
\[\sigma(z|w):=((\ldots, (x_{-2},a_{-2}), (x_{-1},a_{-1}),(x_0,a_0) )| ( (x_1,a_1),(x_{2},a_{2}) ,   \ldots)) \]
for $(z|w):=((\ldots, (x_{-2},a_{-2}), (x_{-1},a_{-1}) )| (x_0,a_0), (x_1,a_1),(x_{2},a_{2}) ,   \ldots)) \in \Omega^{e}$. The one-sided shift in the future coordinates is defined by the same symbol 
\[\sigma(w):= ( (x_1,a_1),(x_{2},a_{2}) ,   \ldots) \]
and  the concatenated past is denoted by
\[\tau_{w}(z):= (\ldots, (x_{-2},a_{-2}), (x_{-1},a_{-1}),(x_0,a_0) ).\]

\medskip
Those spaces and projections do not take in to account the dynamics of the local IFS $R_{\mathfrak X^{e}}$ (in particular of $R_{\mathfrak X}$). In order to do that, one need to consider a dynamically defined subset of $\Omega^{e}$ whose function is to construct an autonomous dynamics containing the orbits of the IFS together with the sequence of indices defining it. This set is denoted the extended space of positive semi-orbits $\Omega^{e}_{+} (\mathfrak X) \subseteq \Omega^{e}_{+}$ given by
\begin{equation}\label{eq:defOmegae}
    \Omega^{e}_{+} (\mathfrak X) :=\{ ((x_0,a_0), (x_1,a_1),(x_{2},a_{2}) ,   \ldots)  \in \Omega^{e}_{+}\;|\; a_{i} \in  \Psi^{e}(x_{i}) , i \ge 0\}.
\end{equation}
We notice  that, the potentially smaller, set of all infinite orbits of the original local IFS, $\Omega_{+}(\mathfrak X) :=\{ ((x_0,a_0), (x_1,a_1),(x_{2},a_{2}) ,   \ldots) \;|\; a_{i} \in  \Psi(x_{i}) , i \ge 0\}$ is always not empty in the contractive setting (see Proposition~\ref{prop:keyorbit}), but regrettably it does not contains endpoints thus it may not suitable to represent the whole local attractor $A_{\mathfrak X}$.

We denote by $\ell: \Omega^{e}_{+} (\mathfrak X) \to \mathbb{N}_0 \cup \{\infty\}$ the length of an element \[w=((x_0,a_0), (x_1,a_1),(x_{2},a_{2}) ,   \ldots) \in \Omega^{e}_{+} (\mathfrak X)\] as the minimum index $k$ such that $a_{i}=0, \forall i \ge k$.  

We notice that $\ell(w)=0$ means that \[w= ((x_0,0), (x_0,0),(x_{0},0) ,   \ldots).\]
On the other hand  $1\le \ell(w)=k< \infty$ it means that \[w=( (x_0,a_0), \ldots, (x_{k-1},a_{k-1}), (x_{k},0) ,  (x_{k},0) ,  \ldots)\] with $a_0, \ldots, a_{k-1} \neq 0$. Finally, if $\ell(w)=\infty$ then we obtain $ a_{k} \neq 0, \forall k \ge 0$, that is, $w \in \Omega_{+} (\mathfrak X)$.

From the start,  we do not require that $a_{i} \in  \Psi^{e}(x_{i}) , i \le -1$, but after $k$ iterations by $\sigma$ we get $a_{i} \in  \Psi^{e}(x_{i}) , -k \leq i \le -1$.  

Thus,  the space of interest is 
\[\Omega^{e}_{-}   \oplus \Omega^{e}_{+} (\mathfrak X)   =  (\pi^{+})^{-1}(\Omega^{e}_{+} (\mathfrak X)) \subseteq \Omega^{e} .\]

\begin{definition}
	We say that $(z|w) \in \Omega^{e}_{-}   \oplus \Omega^{e}_{+} (\mathfrak X)$ is  $k$-admissible, for $k\ge 0$, if 
	$\pi^{+}(\sigma^{-k}((z|w))) \in \Omega^{e}_{+} (\mathfrak X) ,$ and  $\pi_{1, \Sigma^{-}_{e}}(z|w) \neq 0$.
\end{definition}
We notice that  all elements of $\Omega^{e}_{-}   \oplus \Omega^{e}_{+} (\mathfrak X)$ different from $((\ldots,(x_{0}, 0)) \,|\, w)$ are $0$-admissible.

Observe that $\pi_{\Sigma^{-}_{e}}(z|w):= (\ldots, a_{-2},a_{-1} ) \in \Sigma^{-}$ if, and only if, $a_{-i} \neq 0, \forall i$.

However, the dynamical system generated by the local IFS will be better represented by the set
$ 
    \Omega^{e} (\mathfrak X) \subset \Omega^{e}_{-}   \oplus \Omega^{e}_{+} (\mathfrak X)
$ 
given by
\begin{equation}
    \label{eq:defOmegaeu}
\Omega^{e} (\mathfrak X):= \bigcap_{k \geq 1} \{(z|w) \colon  \text{is $k$-admissible}\}.
\end{equation}

\subsection{Proof of Theorem~\ref{thm:D}}
\begin{proof}
We prove the items separately.

\medskip
(1) We claim that
$\pi_{\Sigma^{-}} \Big(\Omega^{e} (\mathfrak X)\Big) \subset \Sigma^{-}_{\mathfrak X}$.
By construction of $\Omega^{e}(\mathfrak X)$, one has that $\pi_{\Sigma^{-}_{e}}(z|w)= (\ldots, b_{-2},b_{-1} ) \in \Sigma^{-}$, hence
    $\pi_{\Sigma^{-}_{e}}(z|w)=\pi_{\Sigma^{-}}(z|w)$ for any  $(z|w) \in \Omega^{e} (\mathfrak X)$.   
Moreover, if $(z|w) \in \Omega^{e} (\mathfrak X)$, then for any $k \geq 1$ one has 
	\[ \pi^{+}(\sigma^{-k}((z|w))) = (  (x_{-k},b_{-k}), \ldots,  (x_{-1},b_{-1}), (x_0,a_0), \ldots, (x_{i-1},a_{i-1}), (x_{i},0),  \ldots) \in  \Omega^{e}_{+} (\mathfrak X) \]
	meaning that  for $\underline b := \pi_{\Sigma^{-}}((z|w))$ one has  $x_0 \in V_{[\underline b]_k} =f_{b_{-1}} \circ f_{b_{-2}} \circ \dots f_{b_{-k}}(X_{b_{-k}})  \neq \emptyset$. Hence  $\underline b \in \Sigma^{-}_{\mathfrak X} $. 

Conversely, in order to prove $\Sigma^{-}_{\mathfrak X}\subset \pi_{\Sigma^{-}} \Big(\Omega^{e} (\mathfrak X)\Big)$,
given $\underline b \in \Sigma^{-}_{\mathfrak X}$ and $x_0=\pi_{\mathfrak X}(b)\in A_{\mathfrak X}$
one can consider the positive orbit from $x_0$, if we  have an endpoint then  the positive orbit of $x_0$  is either
	\[w:=( (x_0,a_0), \ldots, (x_{i-1},a_{i-1}), (x_{i},0) ,  (x_{i},0) ,  \ldots) \in  \Omega^{e}_{+} (\mathfrak X) \] 
	with $a_0, \ldots, a_{i-1} \neq 0$
    or an infinite positive orbit $w$ of $x_0$ without endpoints ($\Psi^{e}(x_{i})\neq 0, i \ge 0$). In any case
    $(z|w) \in \Omega^{e} (\mathfrak X)$ and 
$\pi_{\Sigma^-}((z|w))=\underline b$.

\medskip
(2) By assumption $A_{\mathfrak X}\neq \emptyset$. 
We claim that $\pi_{0,X} (\Omega^{e}(\mathfrak X)) \supseteq  A_{\mathfrak X}$.
As above, given any $x_0 \in A_{\mathfrak X}$ one can find an infinite sequence $z=(\ldots, (x_{-k},b_{-k}), \ldots,  (x_{-1},b_{-1}))$  satisfying $b_{-k} \in \Psi(x_{-k}) , \; k \ge 1$ and an infinite positive orbit $w\in \Omega^{e}_{+} (\mathfrak X)$.
	Clearly, $\pi^+(\sigma^{-k}((z|w))) \in \Omega^{e}_{+} (\mathfrak X) $ for all $k \geq 0$, that is,  $(z|w) \in \Omega^{e} (\mathfrak X)$. By construction
    $\pi_{0,X}((z|w))=x_0$, proving the claim.
The inclusion
$\pi_{0,X} (\Omega^{e}(\mathfrak X)) \subseteq  A_{\mathfrak X}$ is a direct consequence of item (1).

\medskip
(3) We proceed to show that the diagram
\begin{center}
			\begin{tabular}{ccc}
				$\Omega^{e}(\mathfrak X) $ & $\stackrel{\sigma^{-1}}{ \longrightarrow }$ & $\Omega^{e}(\mathfrak X)$ \\
				$  \pi_{-1,X} \times \pi_{\Sigma^{-}}  \downarrow$ &   & $\downarrow \pi_{1,X} \times \pi_{\Sigma^{-}}$ \\
				$ X \times \Sigma^{-} $             & $\stackrel{S}{\longrightarrow } $ &    $X \times \Sigma^{-} $ \\
			\end{tabular}
		\end{center}
        commutes,
		where $S(x, \underline b) = ( f_{b_{-1}}(x), \sigma^{-}(\underline b))$. 
Take an arbitrary \[(z|w):=(  \ldots, (x_{-2},b_{-2}), (x_{-1},b_{-1})|(x_0,a_0), (x_{1},a_{1}),   \ldots) \in \Omega^{e} (\mathfrak X).\]  
Then
	\[ S((\pi_{-1,X} \times \pi_{\Sigma^{-}})(z|w))= S(x_{-1}, \underline b) =( f_{b_{-1}}(x_{-1}), \sigma^{-}(\underline b) ) = (x_0, (\ldots, b_{-3}, b_{-2})) \]
	and
\begin{align*}
   (\pi_{1,X} \times \pi_{\Sigma^{-}} )(\sigma^{-1}(z|w))
   & =(\pi_{1,X} \times \pi_{\Sigma^{-}} ) (  \ldots, (x_{-2},b_{-2})|(x_{-1},b_{-1}),(x_0,a_0), (x_{1},a_{1}),   \ldots)  \\
   & = (x_0, (\ldots, b_{-3}, b_{-2})),
\end{align*}
	proving the commutativity.
\end{proof}

\section{Exponential convergence on the basin of the local attractor}
\label{sec:exponentialbasin}

This section is devoted to the proof of Theorem~\ref{thmB}, concerning the local attractor of a contractive local IFS. The first step is the following
(we refer the reader to \cite[Proposition 2.4]{BHM14} for a slightly more restrictive setting):

\begin{lemma}\label{le:global and local attractors}
   Let $R= \left(X, f_j \right)_{1\le j \le n}$ be a global IFS with attractor $A_{R}$. Assume that $X_j\subset X$ for each $1\le j \le n$ and that $R_{\mathfrak X}=(X_j, f_{j})_{1\le j \le n}$ is the corresponding local IFS. Then, $A_{\mathfrak X} \subseteq A_{R}$.
\end{lemma}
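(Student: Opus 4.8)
The plan is to compare the two fractal operators directly and then pass to their limiting intersections. The crucial elementary observation is that the local operator is always dominated by the global one: for every $B\subseteq X$ and every $1\le j\le n$ one has $B\cap X_j\subseteq B$, hence $f_j(B\cap X_j)\subseteq f_j(B)$, and taking the union over $j$ yields $F_{\mathfrak X}(B)\subseteq F(B)$. This single inclusion, together with monotonicity of the global operator $F$ (which is the special case $X_j=X$ of Proposition~\ref{prop:set prop F_loc}(1)), is essentially all that is needed.

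First I would prove by induction on $i\ge 0$ that $F_{\mathfrak X}^{\,i}(X)\subseteq F^{\,i}(X)$. The base case $i=0$ is the equality $X=X$. For the inductive step, assuming $F_{\mathfrak X}^{\,i}(X)\subseteq F^{\,i}(X)$, I would write
$$
F_{\mathfrak X}^{\,i+1}(X)=F_{\mathfrak X}\bigl(F_{\mathfrak X}^{\,i}(X)\bigr)\subseteq F\bigl(F_{\mathfrak X}^{\,i}(X)\bigr)\subseteq F\bigl(F^{\,i}(X)\bigr)=F^{\,i+1}(X),
$$
where the first inclusion is the domination $F_{\mathfrak X}(B)\subseteq F(B)$ applied to $B=F_{\mathfrak X}^{\,i}(X)$, and the second is monotonicity of $F$ applied to the inductive hypothesis.

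Finally I would take intersections over all $i$. If $A_{\mathfrak X}=\emptyset$ the statement is vacuous, so I may assume $A_{\mathfrak X}\neq\emptyset$; then Proposition~\ref{prop:set prop F_loc}(5) (equivalently Theorem~\ref{thm:topological}(2)) gives $A_{\mathfrak X}=\bigcap_{i\ge 0}F_{\mathfrak X}^{\,i}(X)$, while the classical Hutchinson--Barnsley theory gives $A_R=\bigcap_{i\ge 0}F^{\,i}(X)$. Combining these with the inclusion of iterates,
$$
A_{\mathfrak X}=\bigcap_{i\ge 0}F_{\mathfrak X}^{\,i}(X)\subseteq\bigcap_{i\ge 0}F^{\,i}(X)=A_R,
$$
which is the desired conclusion.

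The argument is short, and I do not expect a serious obstacle; the only points demanding care are invoking the correct characterization $A_{\mathfrak X}=\bigcap_i F_{\mathfrak X}^{\,i}(X)$ (valid here precisely because the restrictions $X_j$ are closed, so the iterates form a nested sequence of compact sets) and the representation $A_R=\bigcap_i F^{\,i}(X)$ of the global attractor. I would also double-check that monotonicity is genuinely available for the global operator $F$ and not merely for $F_{\mathfrak X}$, but since $F$ is the local operator with all restrictions equal to $X$, this is immediate.
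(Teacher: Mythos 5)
Your proposal is correct and takes essentially the same approach as the paper: both prove $F_{\mathfrak X}^{\,i}(X)\subseteq F^{\,i}(X)$ by induction using the domination $F_{\mathfrak X}(B)\subseteq F(B)$ together with monotonicity, and then pass to the limit via $A_{\mathfrak X}=\bigcap_{i\ge 0}F_{\mathfrak X}^{\,i}(X)$ and $A_R=\lim_{i\to\infty}F^{\,i}(X)$. The only (immaterial) difference is the order of the two inclusions in the inductive step — the paper applies monotonicity of $F_{\mathfrak X}$ first and then the domination, whereas you apply the domination first and then monotonicity of $F$.
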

\begin{proof}
The proof is by induction. Take $A_0=X$ and $B_0=X$ and note that $A_0 =B_0$.
Suppose that $A_{k} \subseteq B_{k}$, for $F (B_{k})= B_{k+1}$ and $A_{k+1}= F_{\mathfrak X}(A_{k})$ for some $k\ge 1$.
Using that, by construction $F_{\mathfrak X}(B) \subseteq F(B)$ for any $B\subset X$,
 then
   $$A_{k+1}= F_{\mathfrak X}(A_{k}) \leq F_{\mathfrak X}(B_{k}) \leq F (B_{k})= B_{k+1}.$$
This proves that $A_n \subset B_n$ for every $n\ge 1$ and, consequently,
$$
A_{\mathfrak X} = \lim_{n\to\infty} A_n \subseteq
\lim_{n\to\infty} B_n =A_R.
$$
\end{proof}

The next corollary shows that, for a contractive IFS in a complete metric space, we could restrict our analysis to a compact metric space, the global attractor, and to obtain the local attractor via iteration in that space.  This justifies the assumption of always assuming that the underlying space $X$ is a compact metric space.

\begin{corollary}\label{cor: iterative attractor full attractor}
   Assume $A_{\mathfrak X} \neq \emptyset$. Then,
   $$A_{\mathfrak X}=\bigcap_{k \geq 0} F_{\mathfrak X}^{k}(A_{R}) = \lim_{k \to \infty}  F_{\mathfrak X}^{k}(A_{R}),$$
   that is, $A_{R} \in \mathcal{B}_{\mathfrak X}^{\, \text{inv}}$. 
    
\end{corollary}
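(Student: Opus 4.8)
The plan is to recognize that this corollary is an immediate application of Proposition~\ref{prop:set prop F_loc}(4) to the specific choice $A_0 = A_R$. That proposition asserts that whenever $A_0$ is a closed, $F_{\mathfrak X}$-positively invariant set containing $A_{\mathfrak X}$, one has $A_{\mathfrak X} = \bigcap_{i \geq 0} F_{\mathfrak X}^i(A_0)$ and this set is compact. Hence the entire argument reduces to verifying that $A_R$ satisfies the three required hypotheses: compactness, positive invariance, and containment of $A_{\mathfrak X}$.

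First I would record that $A_R \in K^*(X)$, since the global attractor of a contractive IFS on a compact (hence complete) metric space is a non-empty compact set by the classical Hutchinson--Barnsley theory. Next, Lemma~\ref{le:global and local attractors} supplies precisely the inclusion $A_{\mathfrak X} \subseteq A_R$, so two of the three conditions are in hand at once.

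The remaining point---and the conceptual crux of the proof---is the $F_{\mathfrak X}$-positive invariance of $A_R$. Here I would invoke the pointwise domination $F_{\mathfrak X}(B) \subseteq F(B)$, valid for every $B \subseteq X$ (because $B \cap X_j \subseteq B$ and each $f_j$ is monotone with respect to inclusion, by Proposition~\ref{prop:set prop F_loc}(1)), together with the defining fixed-point identity $F(A_R) = A_R$ for the global Hutchinson--Barnsley operator. Combining these yields $F_{\mathfrak X}(A_R) \subseteq F(A_R) = A_R$, as needed.

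With all three hypotheses in place, Proposition~\ref{prop:set prop F_loc}(4) gives $A_{\mathfrak X} = \bigcap_{k \geq 0} F_{\mathfrak X}^k(A_R)$. By Proposition~\ref{prop:set prop F_loc}(2) the sequence $(F_{\mathfrak X}^k(A_R))_{k \geq 0}$ is nested and decreasing, and since a decreasing sequence of non-empty compact sets converges in the Hausdorff--Pompeiu metric to its intersection (cf. \cite[item (vii)]{Le03}), we also obtain $\lim_{k\to\infty} F_{\mathfrak X}^k(A_R) = A_{\mathfrak X}$. Finally, the three verified properties are exactly the membership conditions of Definition~\ref{def: new basin of attraction}, so $A_R \in \mathcal{B}_{\mathfrak X}^{\, \text{inv}}$. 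As every step is a direct invocation of an already-established fact, I do not anticipate a genuine obstacle; the only subtlety worth double-checking is the domination $F_{\mathfrak X} \leq F$, which relies on the maps $f_j$ being restrictions of the globally defined contractions of $R$.
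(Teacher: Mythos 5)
Your proposal is correct and follows essentially the same route as the paper's proof: both verify $A_{\mathfrak X} \subseteq A_R$ via Lemma~\ref{le:global and local attractors}, obtain $F_{\mathfrak X}$-positive invariance from the domination $F_{\mathfrak X}(A_R) \subseteq F(A_R) = A_R$, and then invoke Proposition~\ref{prop:set prop F_loc} to conclude. Your additional remarks on compactness of $A_R$ and the Hausdorff--Pompeiu convergence of the nested sequence merely make explicit details the paper leaves implicit.
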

\begin{proof}
   From Lemma~\ref{le:global and local attractors} one obtains the inclusion 
   $A_{\mathfrak X} \subseteq A_{R}$.

   On the other hand, $A_0=A_{R}$ is $F_{\mathfrak X}$-positively invariant  because
    $A_1=F_{\mathfrak X}(A_0) \subseteq F(A_0)= F(A_{R})=A_{R}=A_0.$ 

   Thus, by Proposition~\ref{prop:set prop F_loc} one concludes that  $A_{\mathfrak X}=\bigcap_{k \geq 0} A_{k} = \lim_{k \to \infty} A_{k}$.
\end{proof}

In the next result one establishes that every positively invariant set that is attracted to a local attractor is indeed attracted with exponential velocity. In particular, this is extremely useful from the  computational viewpoint in order to approximate the local attractor. 

\medskip

	\begin{proposition}\label{prop: contrac of sequ loc attrac beta version}
	Let $R_{\mathfrak X}=(X_j, f_{j})_{1\le j \le n}$ be local IFS with local attractor $A_{\mathfrak X}$. There exists $\lambda\in (0,1)$ so that, for any  $A_0 \in \mathcal{B}_{\mathfrak X}$ and any $k\ge 1$,
	$$\text{dist}_H(\rm{ess}(F_{\mathfrak X}^k(A_0)), A_{\mathfrak X}) \leq \lambda^k \text{diam}(X).$$
\end{proposition}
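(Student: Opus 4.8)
The plan is to reduce the Hausdorff estimate to a single elementary ingredient — the exponential decay of the diameters of the cylinder sets $V_{[\underline b]_k}$ — and then to use that each essential cylinder is anchored to a genuine point of the attractor produced by the code map $\pi_{\mathfrak X}$. First I would set $\lambda:=\max_{1\le j \le n}\lambda_j\in(0,1)$ and record the contraction estimate: for every code $\underline b$ and every $k\ge 1$ the composition $f_{b_{-1}}\circ\cdots\circ f_{b_{-k}}$ is Lipschitz with constant at most $\lambda_{b_{-1}}\cdots\lambda_{b_{-k}}\le\lambda^k$, whence
$$
\diam\big(V_{[\underline b]_k}\big)=\diam\big(f_{b_{-1}}\circ\cdots\circ f_{b_{-k}}(X_{b_{-k}})\big)\le \lambda^k\,\diam(X_{b_{-k}})\le \lambda^k\,\diam(X).
$$
This is the only place where contractivity is used, and it already produces the scale $\lambda^k\diam(X)$ appearing in the statement.

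I would then estimate the two one-sided distances separately. For $\sup_{a}\,d(a,A_{\mathfrak X})$, take $a\in\rm{ess}(F_{\mathfrak X}^k(A_0))$; by definition $a$ lies in some cylinder indexed by $\underline b\in\Sigma_{\mathfrak X}=\Sigma^-_{\mathfrak X}$. The code map then yields $\pi_{\mathfrak X}(\underline b)=\bigcap_{j\ge1}V_{[\underline b]_j}$, which belongs to $A_{\mathfrak X}$ by item (3) of Proposition~\ref{prop: properties code map} and, by nestedness of the cylinders, also to $V_{[\underline b]_k}$. Since $a$ and $\pi_{\mathfrak X}(\underline b)$ then lie in a common set of diameter at most $\lambda^k\diam(X)$, we obtain $d(a,A_{\mathfrak X})\le\lambda^k\diam(X)$. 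For the reverse distance $\sup_{x\in A_{\mathfrak X}}d(x,\rm{ess})$, I would invoke item (1) of Proposition~\ref{prop: properties code map}: every $x\in A_{\mathfrak X}$ has the form $\pi_{\mathfrak X}(\underline b)$ with $\underline b\in\Sigma_{\mathfrak X}$, so $x$ lies in the corresponding essential cylinder (or within $\lambda^k\diam(X)$ of it), giving the same bound. Taking the maximum of the two yields $\text{dist}_H(\rm{ess}(F_{\mathfrak X}^k(A_0)),A_{\mathfrak X})\le\lambda^k\diam(X)$.

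The step I expect to demand the most care is the compatibility of the essential part with an arbitrary base set $A_0\in\mathcal{B}_{\mathfrak X}$, rather than with $X$ itself. Concretely, one must verify that the cylinders entering $\rm{ess}(F_{\mathfrak X}^k(A_0))$ are precisely those indexed by $\Sigma_{\mathfrak X}$ and that each of them is non-empty, so that the anchoring points $\pi_{\mathfrak X}(\underline b)$ continue to lie in the relevant cylinders and the reverse inclusion $A_{\mathfrak X}\subseteq\overline{\rm{ess}(F_{\mathfrak X}^k(A_0))}$ is not lost. This is exactly the role of the hypothesis $A_0\in\mathcal{B}_{\mathfrak X}$: combined with the surjectivity $\pi_{\mathfrak X}(\Sigma^-_{\mathfrak X})=A_{\mathfrak X}$ and the non-vanishing of $V_{[\underline b]_k}$ for $\underline b\in\Sigma_{\mathfrak X}$, it guarantees that no essential cylinder disappears under iteration. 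Once this bookkeeping is settled, the estimate reduces to the diameter bound of the first paragraph.
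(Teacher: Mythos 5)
Your overall strategy coincides with the paper's: decompose $A_{\mathfrak X}=F_{\mathfrak X}^k(A_{\mathfrak X})$ and $\mathrm{ess}(F_{\mathfrak X}^k(A_0))$ into cylinders indexed by the same admissible words, and bound the discrepancy cylinder by cylinder at scale $\lambda^k\diam(X)$ with $\lambda=\max_j\lambda_j$. Your local mechanism (bound the diameter of the enclosing cylinder $V_{[\underline b]_k}$ and anchor it at $\pi_{\mathfrak X}(\underline b)\in A_{\mathfrak X}\cap V_{[\underline b]_k}$, which is legitimate by nestedness and item (3) of Proposition~\ref{prop: properties code map}) differs only cosmetically from the paper's, which pairs points $x=f_{b_{-1}}\circ\cdots\circ f_{b_{-k}}(x')$ and $z=f_{b_{-1}}\circ\cdots\circ f_{b_{-k}}(z')$ with $x'\in A_0$, $z'\in A_{\mathfrak X}$, and applies the Lipschitz estimate before invoking the union bound for $\text{dist}_H$. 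However, there is a genuine gap at exactly the step you flagged: you assert that the hypothesis $A_0\in\mathcal{B}$ by itself ``guarantees that no essential cylinder disappears under iteration,'' i.e.\ that $V_{[\underline b]_k}(A_0)\neq\emptyset$ for every $\underline b\in\Sigma^-_{\mathfrak X}$ and $k\ge 1$. This is not a consequence of membership in the topological basin: Hausdorff convergence of the full iterates $F_{\mathfrak X}^{i}(A_0)$ to $A_{\mathfrak X}$ imposes no control on an individual word at a fixed finite level, and without that non-emptiness your bound on $\sup_{x\in A_{\mathfrak X}}d(x,\mathrm{ess}(F_{\mathfrak X}^k(A_0)))$ collapses.

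To see the assertion fail, take $X=[0,1]$, $f_1(x)=x/3$ and $f_2(x)=x/3+2/3$ with $X_1=X_2=[0,1]$, and add $f_3(x)=x/3$ with domain $X_3=\{0\}$. Then $A_{\mathfrak X}=\mathcal{C}$ is the ternary Cantor set, and $\underline b=(\ldots,1,1,3)$ belongs to $\Sigma^-_{\mathfrak X}$, since $V_{[\underline b]_k}=f_3\bigl(f_1^{k-1}([0,1])\cap\{0\}\bigr)=\{0\}\neq\emptyset$ for every $k$; yet $A_0=\{1\}$ lies in $\mathcal{B}$ (its iterates never contain $0$, so they coincide with those of the classical IFS $\{f_1,f_2\}$ and converge to $\mathcal{C}$), while
$$
V_{[\underline b]_k}(A_0)=f_3\bigl(\{3^{-(k-1)}\}\cap\{0\}\bigr)=\emptyset \quad\text{for all }k\ge 2.
$$
Here the conclusion survives only because $0$ has the alternative code $(\ldots,1,1,1)$; with a constant map onto a distant point whose domain is a small ball, even the stated inequality can be violated at intermediate $k$ for some $A_0\in\mathcal{B}$. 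What the paper actually uses — its proof opens with ``Since $A_{\mathfrak X}\subset A_0$'' — is the containment $A_{\mathfrak X}\subseteq A_0$, which immediately gives $\emptyset\neq V_{[\underline b]_k}(A_{\mathfrak X})\subseteq V_{[\underline b]_k}(A_0)$ for essential words (non-emptiness of the left-hand side follows from the shift-invariance of $\Sigma^-_{\mathfrak X}$ and Proposition~\ref{prop: properties code map}). Note that this containment is itself not automatic for elements of $\mathcal{B}$ — the set $A''_0=\{0,2\}$ of Example~\ref{ex:differnt-basins} lies in $\mathcal{B}$ without containing $A_{\mathfrak X}$ — so the honest repair of your write-up is to add the hypothesis $A_{\mathfrak X}\subseteq A_0$ (as the paper tacitly does), rather than to derive the non-vanishing of essential cylinders from $A_0\in\mathcal{B}$ alone.
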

\begin{proof}
	 Since $A_{\mathfrak X} \subset A_0$ we obtain 
     $V_{[\underline b]_k}(A_{\mathfrak X})\subset V_{[\underline b]_k}(A_0)$ for every $\underline b\in \Sigma^-$ and $k\ge 1$, where we write
     $ 
     V_{[\underline b]_k}(K)= f_{b_{-1}} \circ f_{b_{-2}} \circ \dots f_{b_{-k}}(K)
     $ 
     for each compact set $K\subseteq X$.
     Consequently, $A_{\mathfrak X} = F_{\mathfrak X}^i(A_{\mathfrak X})  \subseteq  F_{\mathfrak X}^i(A_{0})$. We claim that 
     $$
     \text{dist}_H(V_{[\underline b]_k}(A_{\mathfrak X}), V_{[\underline b]_k}(A_0)) \leq \lambda^k \text{diam}(X)
     $$ 
     for every $\underline b \in \Sigma_{\mathfrak X}^-$ and every $k\ge 1$.
     By the inclusion $V_{[\underline b]_k}(A_{\mathfrak X})\subset V_{[\underline b]_k}(A_0)$ it is enough to show that for every $x\in  V_{[\underline b]_k}(A_0)$ there exists
     $z\in V_{[\underline b]_k}(A_{\mathfrak X})$
so that $\text{dist}(x,z)\le \lambda^k \text{diam}(X).$
In fact, since 
$V_{[\underline b]_k}(A_{\mathfrak X})\neq \emptyset$
one can choose $z\in V_{[\underline b]_k}(A_{\mathfrak X})$. As $F_{\mathfrak X}(A_{\mathfrak X})=A_{\mathfrak X}$ one knows that $z\in A_{\mathfrak X}$ and 
$z=f_{b_{-1}}(\cdots(f_{b_{-k}}(z')))$
for some $z'\in A_{\mathfrak X}$.
Similarly, $x=f_{b_{-1}}(\cdots(f_{b_{-k}}(x')))$ for some $x'\in A_0$. Hence,
using the common contraction rate $\lambda$ we obtain $d(x,z) < \lambda^k d(x',z') \leq  \lambda^k \text{diam}(X)$.
This shows that $V_{[\underline b]_k}(A_0)$ is contained in the $\lambda^k \text{diam}(X)$ neighborhood of the set $V_{[\underline b]_k}(A_{\mathfrak X})$.
Now, observe that $F_{\mathfrak X}^k(A_{0})$ can be written as a union of such cylinder sets as 
\begin{align}
    F_{\mathfrak X}^k(A_{0}) 
    & = \bigcup_{V_{[\underline b]_k}(A_{\mathfrak X}) =\emptyset} V_{[\underline b]_k}(A_0) 
    \cup  \bigcup_{V_{[\underline b]_k}(A_{\mathfrak X})  \neq\emptyset} V_{[\underline b]_k}(A_0) \nonumber \\
    & = \bigcup_{V_{[\underline b]_k}(A_{\mathfrak X}) =\emptyset} V_{j_{-i},\ldots,j_{-1}}(A_{0}) \cup  \rm{ess}(F_{\mathfrak X}^i(A_{0}))
    \label{eq:extraterm}
\end{align}
while
\begin{align*}
    A_{\mathfrak X} = F_{\mathfrak X}^i(A_{\mathfrak X}) 
    & = \bigcup_{V_{[\underline b]_k}(A_{\mathfrak X}) =\emptyset} V_{[\underline b]_k}(A_{\mathfrak X}) \cup  \bigcup_{V_{[\underline b]_k}(A_{\mathfrak X})  \neq\emptyset} V_{[\underline b]_k}(A_{\mathfrak X}) \\
	  & =\bigcup_{V_{[\underline b]_k}(A_{\mathfrak X}))  \neq\emptyset} V_{[\underline b]_k}(A_{\mathfrak X}).
\end{align*}
Moreover, as $\text{dist}_H(\bigcup_{m=1}^n A_m,\bigcup_{m=1}^n B_m) \le \max_{1\le m \le n} \text{dist}_H(A_m,B_m)$ (see e.g. \cite{Fal03}) then
      \begin{align*}
        \text{dist}_H(A_{\mathfrak X} ,  \rm{ess}(F_{\mathfrak X}^k(A_{0}))) 
        & = \text{dist}_H\Bigg(\bigcup_{V_{[\underline b]_k}(A_{\mathfrak X}) \neq\emptyset} V_{[\underline b]_k}(A_{\mathfrak X}), \bigcup_{V_{[\underline b]_k}(A_{\mathfrak X})  \neq\emptyset} V_{[\underline b]_k}(A_0)\Bigg) \\
	  & \leq \max_{V_{[\underline b]_k}(A_{\mathfrak X})  \neq\emptyset } \text{dist}_H(V_{[\underline b]_k}(A_{\mathfrak X}), V_{[\underline b]_k}(A_0)) 
      \\ 
      & \leq \lambda^{k}\text{diam}(X)
      \end{align*}
      for every $k\ge 1$, hence proving the proposition.
\end{proof}

\begin{remark}
The previous proved that the essential part $\rm{ess}(F_{\mathfrak X}^k(A_0))$ converges exponentially to the local attractor. The sets 
$V_{[\underline b]_k}(A_0) \neq \emptyset$  for which $V_{[\underline b]_k}(A_{\mathfrak X}) = \emptyset$ appearing in ~\eqref{eq:extraterm} will disappear as $k \to \infty$ but the velocity in which it happens will strongly  depend on the play between the restrictions and the contraction rate.
\end{remark}

\section{Examples}\label{sec:examples}

In this section we provide a wide collection of examples which illustrate the main results of the paper and highlight the key differences between local attractors and the classical context of attractors for IFSs.
We start with the following example, due to F. Strobin, which illustrates that the closedness hypothesis on the domains of the maps in Theorem~\ref{thm:topological} is in fact essential.

\begin{example} ({\bf Necessity of closed restrictions}) \label{exam:filip}
    Consider the unit interval $X=[0,1]$, the and the subsets $X_1=\left(0, \frac{1}{2}\right]$ and $X_2=[0,1]$. Let $f_1: X_1 \rightarrow X$ be a continuous map such that 
$$
f_1\left(\left(0, \frac{1}{2^n}\right]\right)=\left[0, \frac{1}{2^{n+1}}\right]
$$ 
for every ${n\ge 1}$
and $ f_2: X_2 \rightarrow X$ be constant map  $f_2(x)=1$ for every $x\in X_2$. 
Notice that
$$
F_{\mathfrak X}([0,1])=f_1\left(\left(0, \frac{1}{2}\right]\right) \cup f_2([0,1])=\left[0, \frac{1}{4}\right] \cup\{1\} \
$$
and
$$
F^2_{\mathfrak X}([0,1])=f_1\left(\left(0, \frac{1}{4}\right]\right) \cup f_2\left(\left[0, \frac{1}{4}\right] \cup\{1\}\right)=\left[0, \frac{1}{8}\right] \cup\{1\}.
$$
Proceeding by induction one concludes that 
$ 
F^n_{\mathfrak X}([0,1])=\left[0, \frac{1}{2^{n+1}}\right] \cup\{1\}
$ 
for every $n\ge 1$,
hence
\begin{equation}
\label{ex:noninv1}
\bigcap_{n \ge 1} F^n_{\mathfrak X}([0,1])=\{0,1\}.    
\end{equation}
However, as $F_{\mathfrak X}(\{0,1\})=f_1(\emptyset) \cup f_2(\{0,1\})=\{1\}$ one concludes that the attractor defined by \eqref{ex:noninv1} is not $F_{\mathfrak X}$-invariant. 
\end{example}

\begin{example}\label{ex:differnt-basins}
({\bf Strict inclusion of basins of attraction})
    Consider $X=[0,4]$, the family of restrictions $X_1= X_2=X_3=[0,1]$, $X_4=\{3\}$ and the maps $f_1(x)=\frac{1}{3} x, x \in X_1$, $f_2(x)=\frac{1}{3} x + \frac{2}{3}, x \in X_2$, $f_3(x)=2, \forall x \in X_3$ and $f_4(x)=4, \forall x \in X_4$.

    Using Proposition~\ref{prop:set prop F_loc} it is not hard to check that $A_{\mathfrak X} =\bigcap_{i\geq 0} F_{\mathfrak X}^{i}(X) = \mathcal{C} \cup \{2\}$ where $\mathcal{C} \subset [0,1]$ is the middle third Cantor set. 
    Moreover, by Theorem~\ref{thm:topological}, 
        \[\emptyset \neq \mathcal{B}_{\mathfrak X}^{\, \text{inv}} \subseteq \mathcal{B}_{\mathfrak X}^{\, \text{out}} \subseteq \mathcal{B}_{\mathfrak X}\subseteq K^*(X).\]
    We claim that all previous inclusions are strict. In order to prove that, consider the sets     
    $$
    A'_0:=\mathcal{C} \cup \{2,3\}, \quad  A''_0:= \{0,2\} \quad \text{and} \quad A'''_0:=\{3\}.$$

First we note that $A'_0 \in \mathcal{B}_{\mathfrak X}^{\, \text{out}} \setminus  \mathcal{B}_{\mathfrak X}^{\, \text{inv}}$.
    Indeed,  $F_{\mathfrak X}(A'_0) = \mathcal{C} \cup \{2,4\} $ and $F_{\mathfrak X}^i(A'_0) = \mathcal{C} \cup \{2\} = A_{\mathfrak X},$ for every $i \geq 2 $, thus 
    $A'_0 \in \mathcal{B}_{\mathfrak X}^{\, \text{out}}$.
    However, $F_{\mathfrak X}(A'_0) \not\subseteq A'_0$ thus $ A'_0 \not\in \mathcal{B}_{\mathfrak X}^{\, \text{inv}}$.
Now we observe that $A''_0 \in \mathcal{B}_{\mathfrak X} \backslash  \mathcal{B}_{\mathfrak X}^{\, \text{out}}$.
In fact,  $F_{\mathfrak X}(A''_0) = \tilde F (\{0\}) \cup \{2\} $ and $F_{\mathfrak X}^i(A''_0) = \tilde F^i (\{0\}) \cup \{2\}, \; i \geq 2 $ where $\tilde F$ is the Hutchinson-Barnsley fractal operator associated to the classical contractive IFS formed by $\{f_1, f_2\}$ in $[0,1]$ whose attractor is $\mathcal{C}$. In particular $\text{dist}_H(\tilde F^i (\{0\}), \mathcal{C}) \to 0$ as $i \to \infty$. Thus
     \[\text{dist}_H(F_{\mathfrak X}^i(A''_0),A_{\mathfrak X}) =\text{dist}_H( \tilde F^i (\{0\}) \cup \{2\}, \mathcal{C} \cup \{2\}) \leq \max \left(\text{dist}_H( \tilde F^i (\{0\}) , \mathcal{C}), \text{dist}_H(  \{2\}, \{2\})\right),\]
     hence $\lim_{i \to \infty} F_{\mathfrak X}^i(A''_0) = A_{\mathfrak X}$ which shows that $A''_0 \in \mathcal{B}_{\mathfrak X}$, while it is clear that $ A''_0 \not\in \mathcal{B}_{\mathfrak X}^{\, \text{out}}$.
 Finally, we notice that $A'''_0 \in K^*(X) \setminus \mathcal{B}_{\mathfrak X}$ because
$F_{\mathfrak X}(A'''_0) = \{4\} $ and $F_{\mathfrak X}^i(A'''_0) = \emptyset, \; i \geq 2$, hence   
$ A'''_0 \not\in \mathcal{B}_{\mathfrak X}$.
\end{example}

In what follows we will construct contractive local IFSs where 
there endpoints in the local attractor with finite positive orbits, and
we provide a complete description of the code space and the extended shift-spaces associated to the local attractor.

\begin{example}\label{ex:Elismar}
({\bf Code space for a contractive local IFS so that $A^\infty_{\mathfrak X}\subsetneq A_{\mathfrak X}$})
   Consider the square $X=[0,1]^2 \subset \mathbb{R}^2$ endowed with the Euclidean metric. 
   Firstly, consider the local iterated function system IFS$_1$ in $X$ given by $f_1(x,y)=(x/3,0)$ and $f_2(x,y)=(x/3 + 2/3,0)$ with domains $X_1= [0,1/3]\times \{0\}$ and $X_2= [2/3,1]\times \{0\}$, respectively. 
   This is 
   an embedding of the ternary Cantor set $\mathcal{C}$ on $X$ and whose local attractor is the set $A_1= \mathcal{C} \times \{0\}$.
It is clear that the code space is the set $\Sigma_1= \{1,2\}^{-\mathbb{N}}$.
    Secondly, consider the local IFS, denoted by IFS$_2$, obtained from the previous one by the addition of a third map $f_3(x,y)=q=(1,1)$ defined on the domain $X_3=X_1 \cup X_2$.
It is not hard to check that the fractal operator of these two local IFSs are related by
    \[F_2^n(X)= F_1^n(X) \cup \{q\}\]
    for every $n\ge 1$. Taking the limit as $n$ tends to infinity one concludes that the local attractor $A_2$ for IFS$_2$ is
    \[
    A_2=A_1 \cup \{q\}=\mathcal{C} \cup \{q\}.
    \]
    In particular the local attractor is not contained in $\bigcup_{i=1}^3 X_i$.
    The code space of $A_2$ is a proper subset of $ \{1,2,3\}^{-\mathbb{N}}$. In fact, 
    a careful examination shows that if $(x,y) \in \mathcal{C}$  then it will be obtained via local projection of a sequence in $\Sigma_1$, and $q$ can only be reached as the projection of a sequence $\underline b= (...,\, b_{-2}, b_{-1})$ where $b_{-1} =3$ and $(...,\, b_{-3}, b_{-2}) \in \Sigma_1$. Thus,
    \[\Sigma_2= \{1,2\}^{-\mathbb{N}} \cup (\{1,2\}^{-\mathbb{N}}\ast 3).\]
    
    \smallskip
    Finally, we define a third local IFS, denoted by IFS$_3$ and obtained from the previous one by addition of the map $f_4(x,y)=p=(0,1)$ defined on the set $X_4= X_1 \cup X_2\cup X_3\cup  \{q\} =X_1 \cup X_2 \cup  \{q\}$.
    The fractal operator of IFS$_2$  and IFS$_2$ are related by
    $F_3^n(X)= F_2^n(X) \cup \{p\}$ 
    for every $n\ge 1$, hence characterizing 
    the local attractor $A_3$ of the IFS$_3$ as
    \[A_3=A_2 \cup \{p\}=\mathcal{C} \cup \{q,p\}.\]
    The new code space for $A_3$ is a proper subset of $ \{1,2,3,4\}^{-\mathbb{N}}$. Indeed:
    \begin{itemize}
        \item every $(x,y) \in \mathcal{C}$  is only obtained via local projection of a sequence in $\Sigma_1$;
        \item $q$ can only be obtained    a sequence $(...,\, b_{-2}, b_{-1})$ where $b_{-1} =3$ and $(...,\, b_{-3}, b_{-2}) \in \Sigma_1$, meaning that  
    $q=\lim_n f_{3}(f_{b_{-3}}(f_{b_{-4}}(\cdots)))$;
        \item  $p$ can be obtained in two ways:
        \begin{itemize}
            \item[$\circ$] projection of a sequence $(...,\, b_{-2}, b_{-1})$ where $b_{-1} =4$, $(...,\, b_{-3}, b_{-2}) \in \Sigma_1$ and
    $p=\lim_n f_{4}(f_{b_{-2}}(f_{b_{-3}}(f_{b_{-4}}(\cdots))))$;
    \item[$\circ$]  projection of a sequence $(...,\, b_{-2}, b_{-1})$ where $b_{-2} =3$ and $b_{-1} =4$ and $(...,\, b_{-4}, b_{-3}) \in \Sigma_1$, that is 
    $p=\lim_n f_{4}(f_{3}(f_{b_{-3}}(f_{b_{-4}}(\cdots))))$.
        \end{itemize}
    \end{itemize}
    This shows that
    \[\Sigma_3= \{1,2\}^{-\mathbb{N}} \cup (\{1,2\}^{-\mathbb{N}}\ast 3)\cup (\{1,2\}^{-\mathbb{N}}\ast 4)\cup (\{1,2\}^{-\mathbb{N}}\ast 3\ast 4).\]
Furthermore, the extended space of orbits $\Omega^{e}(\mathfrak X)$ is formed by orbits of four types:
\begin{itemize}
    \item[(I)] Infinite orbits $((\ldots, (x_{-2},a_{-2}), (x_{-1},a_{-1}) )| ((x_0,a_0), (x_1,a_1),(x_{2},a_{2}), \ldots))$, where  \\$(\ldots,a_{-2},a_{-1}) \in  \{1,2\}^{-\mathbb{N}}$;
    \item[(II)] Orbits with end point of length one $((\ldots, (x_{-2},a_{-2}), (x_{-1},4) )| ((p,0), (p,0),(p,0), \ldots))$ for $(\ldots,a_{-2},a_{-1}) \in  \{1,2\}^{-\mathbb{N}}\ast 4$. 
    \item[(III)] Orbits with end point of length one 
     $((\ldots, (x_{-3},a_{-3}),(x_{-2},3), (q,4) )| ((p,0), (p,0),(p,0), \ldots))$ for $(\ldots,a_{-3},{3},4) \in  \{1,2\}^{-\mathbb{N}}\ast 3\ast 4$.
    \item[(IV)] Orbits with end point of length two  $((\ldots,(x_{-2},a_{-2}), (x_{-1},3))| ( (q,4), (p,0), (p,0),(p,0), \ldots))$ for $(\ldots,a_{-2},a_{-1}) \in  \{1,2\}^{-\mathbb{N}} \ast 3$.
\end{itemize}
In this example, the set $\Omega$ formed by orbits that always remain in the union of the domains of the maps $f_i$ (recall ~\eqref{eq:def-enlarged-Omega} and Theorem~\ref{thmC}) coincides with the $\pi^{+}$ projection of all type (I) orbits.
\end{example}

We observe that for the local attractor in Example~\ref{ex:Elismar} one has $A_{\mathfrak X}^{\infty} = \mathcal{C}$ is the middle third Cantor set, that $A_{\mathfrak X} \setminus A_{\mathfrak X}^{\infty} = \{q,p\}$, and  $\text{dist}_H(A_{\mathfrak X} \setminus A_{\mathfrak X}^{\infty}, A_{\mathfrak X}^{\infty}) > 0$.
The next example shows that, in general, one cannot expect to separate the set of points $A^\infty_{\mathfrak X}$ in the local attractor that have infinite orbits from the set
$A_{\mathfrak X}\setminus A^\infty_{\mathfrak X}$ of endpoints of the attractor. 

\begin{example} 
    \label{ex:shift} 
    ({\bf A contractive local IFS so that  $ A_{\mathfrak X}\setminus A^\infty_{\mathfrak X}$ accumulates on $A^\infty_{\mathfrak X}$}) 
 Consider the metric space $X = \{0,1,2\}^{\mathbb{N}}$ endowed with the product topology. Define the following maps and their corresponding domains:
\[
\begin{aligned}
f_0(x) &:= (0, x_1, x_2, \ldots), & X_0 &:= \{0,1\}^{\mathbb{N}}, \\
f_1(x) &:= (1, x_1, x_2, \ldots), & X_1 &:= \{0,1\}^{\mathbb{N}}, \\
f_2(x) &:= (0,0, x_1+x_2, x_2+x_3, \ldots), & X_2 &:= X_0 \cup X_1 = \{0,1\}^{\mathbb{N}}.
\end{aligned}
\]
It is clear from the definition that $\operatorname{Lip}(f_0) = \operatorname{Lip}(f_1) = 1/2$ and that
\begin{equation}
    \label{eq:ping}
    f_0(\{0,1\}^{\mathbb{N}}) \cup f_1(\{0,1\}^{\mathbb{N}}) = \{0,1\}^{\mathbb{N}}.
\end{equation}
We claim that $\operatorname{Lip}(f_2) \le 1/2$.
Indeed, for $x, y \in \{0,1\}^{\mathbb{N}}$ with $d(x,y) = e^{-N}$, one can write
\[
x = (a_1, a_2, \ldots, a_N, x_{N+1}, \ldots), \quad 
y = (a_1, a_2, \ldots, a_N, y_{N+1}, \ldots), \quad x_{N+1} \neq y_{N+1}
\]
and observe that 
\[
\begin{aligned}
d(f_2(x), f_2(y)) &= d\big((0,0,a_1+a_2, \ldots, a_N + x_{N+1}, \ldots), (0,0,a_1+a_2, \ldots, a_N + y_{N+1}, \ldots)\big) \\
&\le e^{-N-1} = \frac{1}{2} d(x,y),
\end{aligned}
\]
since $a_N + x_{N+1} \neq a_N + y_{N+1}$.
Moreover, as 
$ 
f_2(0,1,1,\ldots) = (0,0,1,2,\ldots) \notin \{0,1\}^{\mathbb{N}}.
$ 
one obtains that 
\[
f_2(\{0,1\}^{\mathbb{N}}) \not\subset \{0,1\}^{\mathbb{N}}.
\] 
In particular, $R_{\mathfrak X} = (X_j, f_j)_{1\le j \le 3}$ is a contractive local IFS. 

\medskip
Let $A_{\mathfrak X}$ denote the local attractor of $R_{\mathfrak X}$ and $A_{\mathfrak X}^{\infty}$ be as in ~\eqref{eq:maxinvA}. We claim that 
\[
A_{\mathfrak X} \setminus A_{\mathfrak X}^{\infty} \neq \emptyset \quad \text{and} \quad \text{dist}_H(A_{\mathfrak X} \setminus A_{\mathfrak X}^{\infty}, A_{\mathfrak X}^{\infty}) = 0.
\] 
We start by noticing that
\begin{equation}
\label{eq:preciseinc1}
\{0,1\}^{\mathbb{N}} \subset A_{\mathfrak X}^{\infty}
\quad \text{and} \quad 
A_{\mathfrak X}=\{0,1\}^{\mathbb{N}} \cup Q,   
\end{equation} 
where $Q := f_2(\{0,1\}^{\mathbb{N}})$. 
In fact, $\{0,1\}^{\mathbb{N}} \subset A_{\mathfrak X}$ as a consequence of ~\eqref{eq:ping}.   
Moreover, as every point $x \in \{0,1\}^{\mathbb{N}}$ has an infinite orbit one concludes that $\{0,1\}^{\mathbb{N}} \subset A_{\mathfrak X}^{\infty}$, thus proving the  inclusion 
in \eqref{eq:preciseinc1}. Now, taking the image of the set $\{0,1\}^{\mathbb{N}} \cup Q$ by the Hutchinson-Barnsley operator acting we obtain
\[
\begin{aligned}
F_{\mathfrak X}(\{0,1\}^{\mathbb{N}} \cup Q) &= \bigcup_{j=0}^2 f_j\big((\{0,1\}^{\mathbb{N}} \cup Q) \cap X_j\big) \\
&= \{0,1\}^{\mathbb{N}} \cup f_2(\{0,1\}^{\mathbb{N}}) 
= \{0,1\}^{\mathbb{N}} \cup Q.
\end{aligned}
\]
This proves that $\{0,1\}^{\mathbb{N}} \cup Q$ is positively $F_{\mathfrak X}$-invariant. By maximality of the local attractor (recall Theorem~\ref{thm:topological}) we conclude that $\{0,1\}^{\mathbb{N}} \cup Q \subseteq A_{\mathfrak X}$.
The converse inclusion follows from the fact that every point in $Q$ is accumulated by orbits with arbitrary length, under compositions of maps of the form $f_2 \circ f_{j_k}\circ \dots \circ f_{j_1}$ with $j_\ell\in\{0,1\}$ for every $1\le \ell \le k$ (recall Proposition~\ref{prop: infinite orbits attractor}).
This completes the proof of the claim in \eqref{eq:preciseinc1}. 

\smallskip
A closer examination of the action of the map $f_2$ shows that 
there exists a partition $\{0,1\}^{\mathbb{N}} = \Sigma_M \sqcup (\{0,1\}^{\mathbb{N}} \setminus \Sigma_M)$, where $\Sigma_M \subset \{0,1\}^{\mathbb{N}}$ is the sub-shift of finite type determined by the forbidden word $11$,  in such a way that $f_2(\Sigma_M) \subset \{0,1\}^{\mathbb{N}}$  and $f_2(\{0,1\}^{\mathbb{N}} \setminus \Sigma_M)= Q \setminus \{0,1\}^{\mathbb{N}}$.
Therefore, 
\begin{equation}
    \label{eq:decomp-attractor}
A_{\mathfrak X} \setminus A_{\mathfrak X}^{\infty}= f_2(\{0,1\}^{\mathbb{N}} \setminus \Sigma_M) \quad\text{and}\quad 
A_{\mathfrak X} = \{0,1\}^{\mathbb{N}} \sqcup f_2(\{0,1\}^{\mathbb{N}} \setminus \Sigma_M).
\end{equation}

We proceed to show that $ A_{\mathfrak X}\setminus A^\infty_{\mathfrak X}$ accumulates on $A^\infty_{\mathfrak X}$.
For each $n\ge 1$ consider the sequence
\[
\underline b^n := (0, \ldots, 0, 1, 1, 0, 0, \ldots) = (0^n, 1,1,0^\infty) \in \{0,1\}^{\mathbb N}.
\]
It is clear that $\underline b^n \in A_{\mathfrak X}^{\infty}$  
and
\[
f_2(\underline b^n) = (0^{n+1}, 1, 2, 1, 0^\infty) \in Q \subset A_{\mathfrak X}, 
\quad f_2(\underline b^n) \notin X_0 \cup X_1 \cup X_2,
\]
hence $f_2(\underline b^n) \in A_{\mathfrak X} \setminus A_{\mathfrak X}^{\infty}$. However, as $d(\underline b^n, f_2(\underline b^n)) = e^{-n}$  tends to zero as $n$ tends to infinity, this proves that $\text{dist}_H(A_{\mathfrak X} \setminus A_{\mathfrak X}^{\infty}, A_{\mathfrak X}^{\infty}) = 0$.
\end{example}

\begin{example}\label{ex:nonSFT}
({\bf Contractive local IFS whose code space is not an SFT}) 
Let $R_{\mathfrak X}=(X_j,f_j)_{0\le j \le 2}$
    be the contractive local IFS described in Example~\ref{ex:shift}, whose attractor $A_{\mathfrak X}$ was described in  ~\eqref{eq:decomp-attractor}. We claim that the code space $\Sigma_{\mathfrak X} \subset \{0,1,2\}^{-\mathbb{N}}$ has a more complex structure and that it is not a subshift of finite type. One knows that 
$\Sigma_{\mathfrak X}\subset \{0,1,2\}^{-\mathbb{N}}$ is a subshift of finite type
if and only if  $\sigma^-: \Sigma_{\mathfrak X} \to \Sigma_{\mathfrak X}$ is an open map (see e.g. \cite[Theorem~3.35]{Kur03}).

\smallskip
Given the open set $U:=[2] = \{\underline{b} \in \Sigma_{\mathfrak X} \colon b_{-1}=2\} \subset \Sigma_{\mathfrak X}$ we claim that its image 
\[
V:=\sigma^-(U) =\{\underline{c} \in \Sigma_{\mathfrak X} | \underline{c}*2 \in \Sigma_{\mathfrak X}\} \subset \Sigma_{\mathfrak X}
\]
is not open. In order to prove this fact, consider the sequence
$
\underline{b}=(\ldots, 1,0\ldots,1,0,2,2)
$ 
formed by infinitely many blocks $1,0$ and ending in $2,2$ and note that $\underline{b} \in U$ as
\begin{align*}
\pi_{{\mathfrak X}}(\underline{b})
& =\lim_{k\to \infty} f_2^2\circ (f_0\circ f_1)^{k}(X) = f_2^2 (\lim_{k\to \infty}(f_0\circ f_1)^{k}(X)) =  f_2^2 (0,1,0,1,0,1,\ldots)    \\
&= f_2 (0,0,1,1,1,1,1,\ldots) = (0,0, 0,1,2,2,2,2,....) \in A_{\mathfrak X}.
\end{align*}
By the invariance of $\Sigma_{\mathfrak X}$, it follows that $\underline{c}=\sigma^-(\underline{b})= (\ldots, 1,0\ldots,1,0,2) \in V \subset \Sigma_{\mathfrak X}$.

We claim that $\underline{c}$ is not an interior point of $V$.
Indeed, for each $n\ge 1$ take the sequence 
\[
\underline c^m := (\ldots, 0,1,0, 1,0,1,1, \boxed{1,0\ldots,1,0},2) \in \{0,1,2\}^{-\mathbb{N}},
\]
where $m \geq 1$ stands for the number of 
blocks of the form $1,0$ between the block $1,1$ and the final symbol $2$.
It is clear that $d(\underline c^m,\underline{c})=e^{-(2m+1)}$ tends to zero as $m \to \infty$, but $\underline c^m$ does not belong to $V$, because
\begin{align*}
\pi_{{\mathfrak X}}(\underline c^m)
& =f_2\circ (f_0\circ f_1)^{m} \circ f_1^2 (\lim_{k\to \infty}(f_0\circ f_1)^{k}(X)) 
\\
&=  f_2\circ (f_0\circ f_1)^{m}   (1,1,0,1,0,1,0,1,\ldots) \\
& 
=(0,0,1,1,\ldots, 1,2,1,1,1,1,\ldots)  \in \Sigma_{\mathfrak X}
\end{align*}
does not belong to $V$ because one can not apply $f_2$ to this point.
\end{example}

\begin{example}({\bf Local attractors with non-constant local dimension})
\label{ex:MapleSierpinski}
Consider the contractive local IFS 
$R_{\mathfrak X}=(X_i, f_i)_{1\le i \le 9}$ where the maps are defined by 
\[\begin{cases}
	 	f_1(x,y)= (0.8 x+0.1, 0.8 y+0.04)\\
	 	f_2(x,y)=  (0.5 x+0.25, 0.5 y+0.4 )\\
	 	f_3(x,y)=  ( 0.355 x-0.355 y+0.266, 0.355 x+0.355 y+0.078)\\
	 	f_4(x,y)= ( 0.355 x+0.355 y+0.378, -0.355 x+0.355 y+0.434)\\
        f_5(x,y)= (1+ ((x-1)+0)/2,1+ ((y-1)+0)/2)\\  
        f_6(x,y)=(1+ ((x-1) +1)/2,1+ ((y-1)+0)/2)\\
        f_7(x,y)=(1+ ((x-1) +1/2)/2,1+ ((y-1) +(\sqrt(3)/2))/2)\\
        f_8(x,y)=(\gamma x+ p_1,\gamma y+ p_2)\\
         f_9(x,y)= (\beta x+ q_1,\beta y+ q_2),
	 \end{cases}
\]
where $p_1=0.5, p_2=1.5, q_1=1.2, q_2=0.2, \gamma=0.3, \; \beta=0.6$  
and the sets $X_i\subset [0,2]^2$  are  given by $X_1=X_2=X_3=X_4=[0,1]^2$, $X_5=X_6=X_7=[0,2]\times [1,2]$ and $X_8=X_9=[0,1]^2$.

We note that the local IFS  $(X_i,f_i)_{1\le i \le 4}$ generates the classical fractal Maple leaf in $[0,1]^2$ and the local IFS  $(X_i,f_i)_{5\le i \le 7}$  generates a classical Sierpinski triangle in up-right corner $[1,2]^2$. As $[0,2]\times [1,2] \supset[0,1]^2$ some copies of the Maple leaf will accumulate at the Sierpinski via the map $f_8$, as this is a similarity around the point $p=(p_1,p_2)=(0.5,1.5)$. The map $f_9$, being a similarity around the point $q=(q_1,q_2)=(1.2,0.2)$, generates a copy in small scale of the Maple leaf in the bottom-left corner (grey) entirely formed by endpoints (since no domain will intercept it).

A numerical approximation of the local attractor $A_{\mathfrak X}$ is depicted in Figure~\ref{fig:maple sierp merge}. 
\begin{figure}[htb]
    \centering
\includegraphics[width=0.3\linewidth]{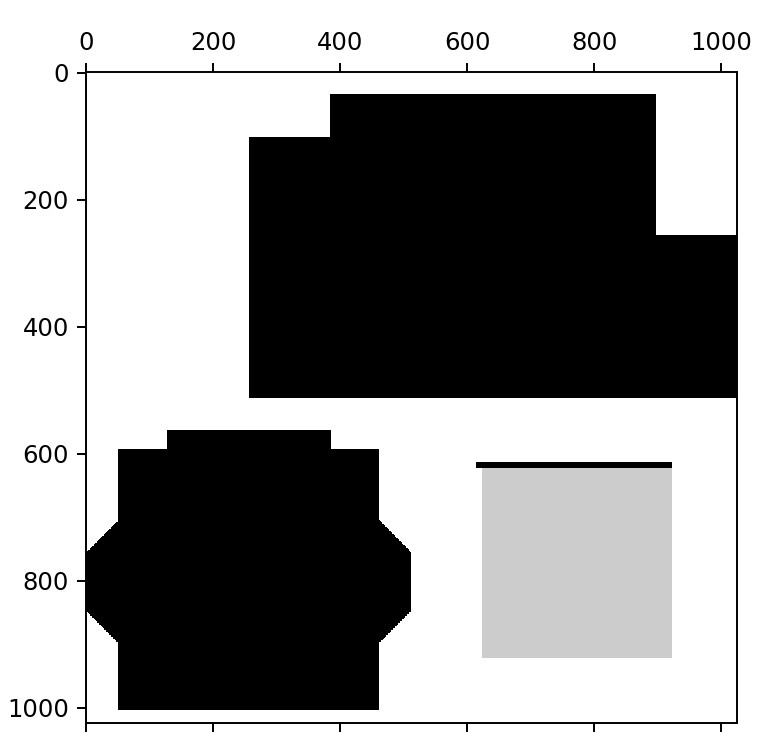} \quad \includegraphics[width=0.3\linewidth]{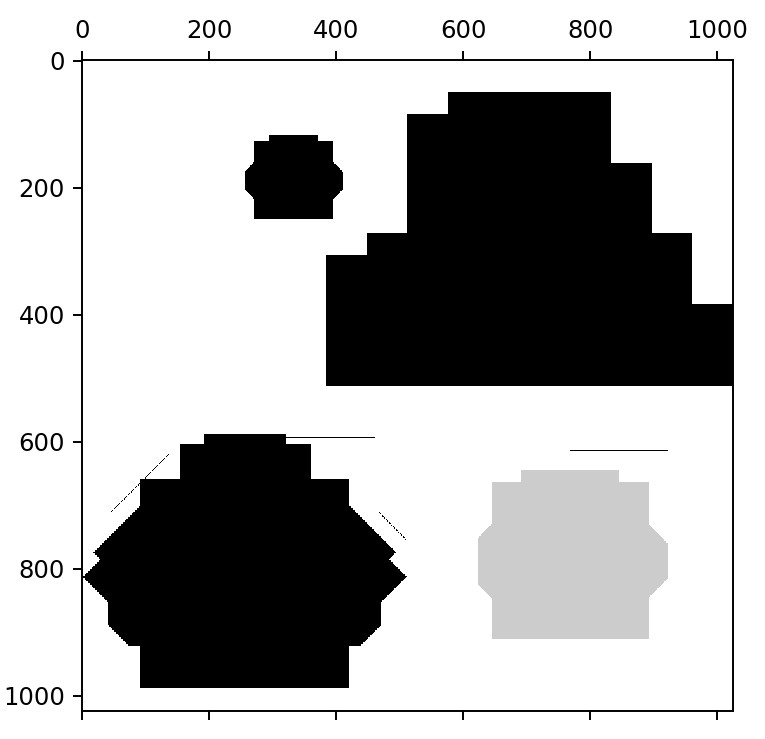}\\    \includegraphics[width=0.3\linewidth]{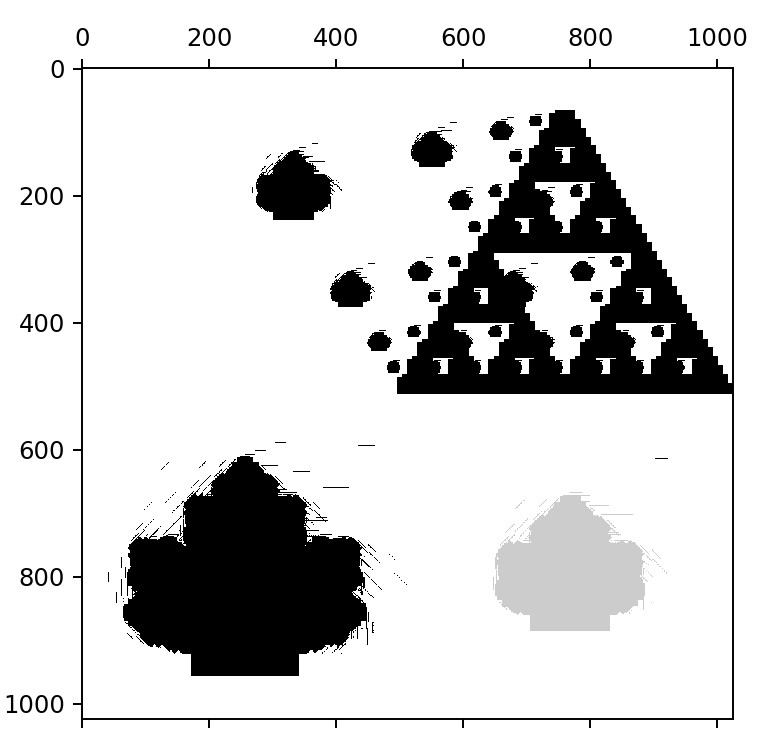} \quad \includegraphics[width=0.3\linewidth]{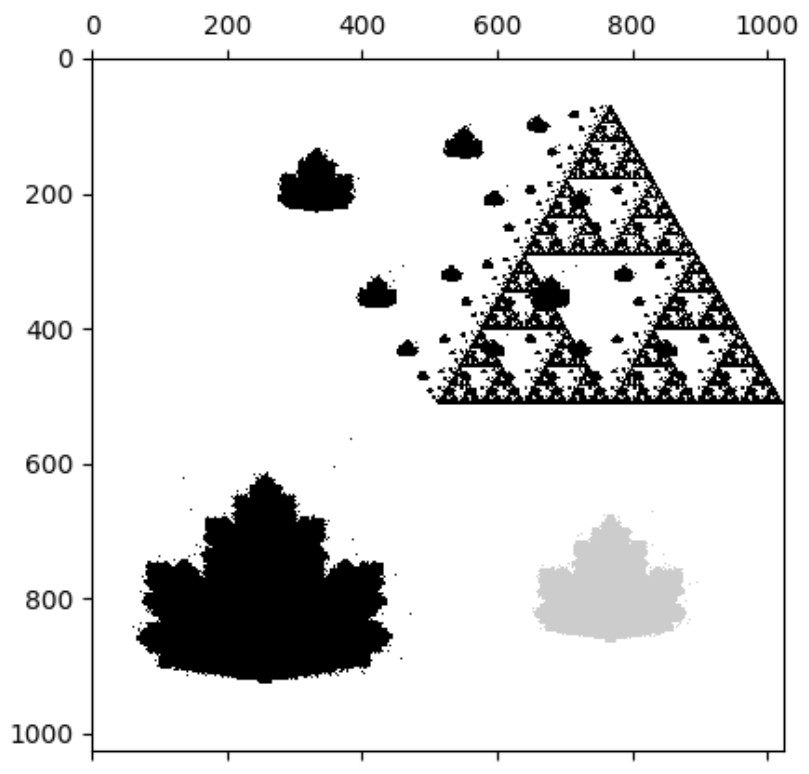}
    \caption{Iterates $F_{\mathfrak X}^1(X),F_{\mathfrak X}^2(X), F_{\mathfrak X}^5(X)$ and $F_{\mathfrak X}^{15}(X)$ (left to right, up to bottom).}
    \label{fig:maple sierp merge}
\end{figure}
The local attractor is highly non self-similar because the Maple leaf and the Sierpinski have different local structures and different local Hausdorff dimension.
Finally, the attractor $A_{\mathfrak X}$ is non-transitive, as there exists a set of endpoints at positive distance of the remaining of the local attractor. 
\end{example}

\bigskip
\subsection*{Acknowledgments} 

We are grateful to F. Strobin his comments on a preli\-minary version of the text and for providing Example~\ref{exam:filip}. 
   This work was initiated during a visit of the first named author to University 
of Aveiro. ERO was partially supported by MATH-AMSUD under the project GSA/CAPES, Grant 88881.694479/2022-01, and by CNPq Grant 408180/2023-4.

PV was partially supported by CIDMA under the
Portuguese Foundation for Science and Technology 
(FCT, https://ror.org/00snfqn58)  
Multi-Annual Financing Program for R\&D Units,
grants UID/4106/2025 and UID/PRR/4106/2025.


\begin{thebibliography}{BHM14}
\bibitem[Bar86]{Bar86} M. F. Barnsley, \emph{Fractal functions and interpolation}, Constr. Approx., 2 (1986), 303--329.

\bibitem[Bar88]{Bar88} M. F. Barnsley. \emph{Fractals everywhere.} Academic Press, 1988.

\bibitem[Bar06]{Bar06} M. F. Barnsley. \emph{Superfractals}. Cambridge University Press, 2006.

\bibitem[BD85]{BarnsleyDemko1985}
M. F. Barnsley and S. Demko,
\newblock Iterated function systems and the global construction of fractals,
\newblock {\em Proc. Roy. Soc. A} 399:1817 (1985) 243--275.

\bibitem[BHM14]{BHM14}
M.F. Barnsley, M. Hegland and P. R. Massopust, \emph{Numerics and Fractals.} Bull. Inst. Math. Acad. Sin. (N.S.) 2014, 9(3), 389430.

\bibitem[DV24]{Verma}
S. Dubey and S. Verma, Inhomogeneous graph-directed attractors and fractal measures, J. Anal. 32 (2024) 157--170.

\bibitem[BH93]{BH93} M. F. Barnsley and L. P. Hurd, \emph{Fractal Image Compression}, AK Peters Ltd., Wellesly,
Massachusetts, 1993.

 \bibitem[Fal03]{Fal03} K. Falconer. \emph{Fractal geometry}, Mathematical foundations and applications. John
Wiley \& Sons, Inc., Hoboken, NJ, second edition, 2003.


\bibitem[Fen23a]{Fen23a}
D.-J. Feng,
\newblock \emph{Dimension of invariant measures for affine iterated function systems,}
\newblock { Duke Math. J.} 172 (2023), no. 4, 701--774.

\bibitem[Fen23b]{Fen23b}
D.-J. Feng,
\newblock \emph{Dimension estimates for $C^1$ iterated function systems and repellers. Part~I,}
\newblock { Ergodic Theory Dynam. Systems} (2023), published online.

\bibitem[FU09]{FU09}
D.-J. Feng and H. Hu,
\newblock \emph{Dimension theory of iterated function systems,}
\newblock { Comm. Pure Appl. Math.} 62 (2009), no. 11, 1435--1500.


\bibitem[Fis95]{Fis95} Y Fisher. \emph{Fractal Image Compression: Theory and Application.} New York. Springer Verlag, 1995.

\bibitem[GLW88]{GLW88}
E. Ghys, R. Langevin and P. Walczak,
\emph{Entropie geometrique des feuilletages},
Acta Math. 160 (1988) 105--142.


\bibitem[GE18]{GE18}
A. Gowrisankar and D. Easwaramoorthy,  \emph{Local Countable Iterated Function Systems.} In: Advances in Algebra and Analysis. Trends in Mathematics. Birkhäuser, 2018.

\bibitem[Hut81]{Hut81} J. Hutchinson. {\em Fractals and self-similarity.} Indiana Univ. Math. J. 30 (1981), no 5, 713-747.

\bibitem[HK25]{HK25}
A.J. Homburg and C. Kalle,
\emph{Iterated function systems of affine expanding and contracting maps on the unit interval,} Adv. Math. 482, Part B,
2025,
110605.

\bibitem[Jeb25]{Jeb25}
H. Jebali, \emph{On the study of a new class of local fractal functions.} J. Anal. (2025). doi://10.1007/s41478-025-00917-6

\bibitem[Klo22]{Klo22}
B. Kloeckner,
\emph{Optimal transportation and stationary measures for iterated
function systems}
Math. Proc. Camb. Phil. Soc., 
173: 1 (2022) 163--187.

\bibitem[Kur03]{Kur03} 
P. Kurka,
\emph{Topological and Symbolic Dynamics},
Collection SMF Series Vol 11,
Soci\'et\'e math\'ematique de France, 2003


\bibitem[Le03]{Le03}
K. Le\'sniak,
\emph{Stability and invariance of multivalued iterated function systems,}
Mathematica Slovaca, 53 (2003), No. 4, 393--405.

\bibitem[LKN25]{LKN25}
K. Leśniak, M. Kwieciński, and P. Niemiec,
\newblock \emph{Highly non-contractive iterated function systems on Euclidean space can have an attractor,}
\newblock {J. Dyn. Diff. Equat.} (2024), published online 2025.

\bibitem[Mas14]{Mas14}
P. Massopust, \emph{On local fractal functions in Besov and Triebel-Lizorkin spaces.} J. Math. Anal. Appl. 436 (2016) 393--407.

\bibitem[MU96]{MU96}
R. D. Mauldin and M. Urbański,
\newblock \emph{Dimensions and measures in infinite iterated function systems},
\newblock {Proc. London Math. Soc.} (3) 73 (1996), no. 1, 105--154.

\bibitem[MW88]{MW88}
R. D. Mauldin and S. C. Williams,
Hausdorff dimension in graph directed constructions, Trans. Amer. Math. Soc.
309:2 (1988) 811--829    

\bibitem[Mic20]{Mic20}
R. Miculescu,
\newblock Contractive affine generalized iterated function systems,
\newblock {\em Appl. Math. Lett.} 112 (2020), 106709.

\bibitem[OS18]{OS18}
E. R. Oliveira and F. Strobin, \emph{Fuzzy attractors appearing from GIFZS.} Fuzzy Sets Syst. 331 (2018), 131–156.

\bibitem[OSV26]{OSV26}
E. R. Oliveira, F. Strobin and P. Varandas, \emph{Existence of attractors for local Fuzzy iterated function systems} 
(in preparation)

\bibitem[OV26]{OV26}
E. R. Oliveira and P. Varandas, \emph{Topological and combinatorial stability of local iterated function systems}, Preprint 2026.

\bibitem[Ste99]{Ste99}
D. Steinsaltz,
\newblock Locally contractive iterated function systems,
\newblock {\em Ann. Probab.} 27 (1999), no. 4, 1952--1979.


\bibitem[Wal04]{Wal04}
P. Walzak, Dynamics of foliations, groups and pseudogroups, Springer BaselAG, 2004.


\end{thebibliography}
\end{document}